\def\myrefs#1#2{ 
{\bigskip \noindent
{\Large \bf #2}  
 \list {[\arabic{enumi}]}{\settowidth\labelwidth{[#1]}
 \leftmargin\labelwidth 
 \advance\leftmargin\labelsep
 \usecounter{enumi} }  
 \def\newblock{\hskip .11em plus .33em minus .07em}
 \sloppy\clubpenalty4000\widowpenalty4000
 \sfcode`\.=1000\relax}  }
\def\comb#1,#2,{ \left( {#1 \atop #2 } \right)  }%
\def\prodd#1,#2,#3,{ \prod_{\scriptstyle #1 \atop\scriptstyle #2 }^{#3} }%
\def\summ#1,#2,#3,{ \sum_{\scriptstyle #1 \atop\scriptstyle #2 }^{#3} }%
\newtheorem{algor}{{\sc Algorithm}}[section]
\newtheorem{tabl}{Table}[section]
\def\betab{\begin{tabbing} 
xxxx\=xxxx\=xxx\=xx\=xx\=xx\=xx\=xx\=xx\=xx\=xx\=xx\=xx\= \kill} 
\def\entab{\end{tabbing}\vspace{-0.12in}}
\newenvironment{algorithm0}{\begin{algor} \sl }{ \end{algor} } 
\newcommand{\eq}[1]{\begin{equation}\label{#1}}
\newcommand{\en}{\end{equation}}
\newcommand{\beeq}[1]{\begin{equation}\label{#1}}
\newcommand{\eneq}{\end{equation}}
\definecolor{mygreen}{RGB}{28,172,0} 
\definecolor{mylilas}{RGB}{170,55,241}
\definecolor{myred2}{RGB}{236,1,59}
\definecolor{myblue2}{RGB}{116,122,255}
\definecolor{mygreen2}{RGB}{110,184,129}
\definecolor{mygreen3}{RGB}{80,229,109}
\definecolor{mygray2}{RGB}{196,196,196}
\pgfplotsset{compat=newest}
\pgfplotsset{compat=1.3}
\newcolumntype{R}{>{$}r<{$}}
\newcolumntype{V}[1]{>{[\;}*{#1}{R@{\;\;}}R<{\;]}}
\tikzset{
    state/.style={
           rectangle,
           rounded corners,
           draw=black, very thick,
           minimum height=2em,
           inner sep=2pt,
           text centered,
           },
}
\newcommand{\xpmatrix}[1]{\begin{pmatrix} #1 \end{pmatrix}}
\newcommand{\ra}[1]{\renewcommand{\arraystretch}{1}}\addtolength{\tabcolsep}{-1pt}
\ifodd\value{page}
\authors
\shorttitle
\newcommand{\TheTitle}{Fast randomized non-Hermitian eigensolvers based on rational filtering and matrix partitioning} 
\newcommand{\TheShortTitle}{ Rational filtering non-Hermitian eigensolvers} 
\newcommand{\TheAuthors}{Vassilis Kalantzis, Yuanzhe Xi, and Lior Horesh}
\headers{\TheShortTitle}{\TheAuthors}
\author{
    Vassilis Kalantzis\thanks{IBM Research, Thomas J.\ Watson Research Center, Yorktown Heights, NY 10598, USA
    (\email{vkal@ibm.com}, \email{lhoresh@us.ibm.com}).}
  \and
  Yuanzhe Xi\thanks{Department of Mathematics, Emory University, Atlanta, GA 30322, USA
    (\email{yxi26@emory.edu}).}
  \and
  Lior Horesh\footnotemark[2]}
\headers{\TheShortTitle}{\TheAuthors}
\title{{\TheTitle}}
\begin{document}

\maketitle

\begin{abstract}
This paper describes a set of rational filtering algorithms to compute a few eigenvalues 
(and associated eigenvectors) of non-Hermitian matrix pencils. Our interest lies in 
computing eigenvalues located inside a given disk, and the proposed algorithms approximate 
these eigenvalues and associated eigenvectors by harmonic Rayleigh-Ritz projections on 
subspaces built by computing range spaces of rational matrix functions through randomized 
range finders. These rational matrix functions are designed so that directions 
associated with non-sought eigenvalues are dampened to (approximately) zero. Variants 
based on matrix partitionings are introduced to further reduce the overall complexity of 
the proposed framework. Compared with existing eigenvalue solvers based on rational 
matrix functions, the proposed technique requires no estimation of the number of 
eigenvalues located inside the disk. Several theoretical and practical issues are 
discussed, and the competitiveness of the proposed framework is demonstrated via 
numerical experiments.
\end{abstract}

\begin{keywords} Rational filtering, matrix partitioning, contour integral 
eigensolvers, non-Hermitian eigenvalue problems, randomized algorithms \end{keywords}

\begin{AMS} 65F15, 15A18, 65F50 \end{AMS}

\section{Introduction}

This paper describes a rational filtering framework to compute a few eigenvalues 
and associated eigenvectors of non-Hermitian eigenvalue problems of the form 
\begin{equation}
Ax=\lambda Mx,  
\label{eq:problem}
\end{equation}
where the matrices $A\in\mathbb{C}^{n\times n}$ and $M\in\mathbb{C}^{n\times n}$ 
are assumed large and sparse, and the pencil $(A,M)$ is assumed regular and diagonalizable. The focus of this paper lies in computing all eigenvalues 
located in the interior of a disk ${\cal D}$ prescribed in the complex domain.
An illustrative example is shown in Figure \ref{fig:example}.
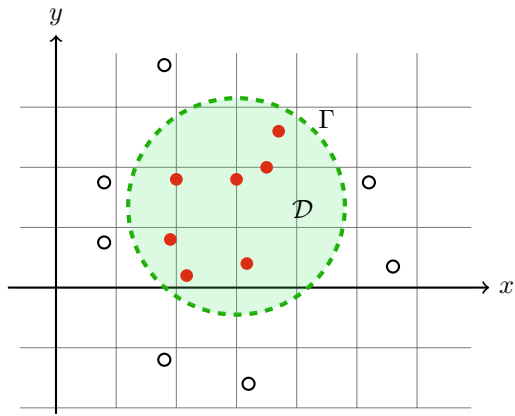
\begin{figure}
\centering
\begin{tikzpicture}[domain=0:4,scale = 0.8]
  \draw[color=gray] (-0.6,-2.0) grid (6.9,3.9);
  \draw[->,thick] (-.8,0) -- (7.2,0) node[right] {$x$};
  \draw[->,thick] (0,-2.1) -- (0,4.2) node[above] {$y$};
  \coordinate (a) at (2.1712,0.2);
  \fill[color=red] (a) circle (3pt);
  \coordinate (a) at (3.1712,0.4);
  \fill[color=red] (a) circle (3pt);
  \coordinate (a) at (1.9,0.8);
  \fill[color=red] (a) circle (3pt);
    \coordinate (a) at (3.7,2.6);
  \fill[color=red] (a) circle (3pt);
    \coordinate (a) at (3.5,2.0);
  \fill[color=red] (a) circle (3pt);
    \coordinate (a) at (3.0,1.8);
  \fill[color=red] (a) circle (3pt);
  \coordinate (a) at (2.0,1.8);
  \fill[color=red] (a) circle (3pt);
  %
  \draw [dashed,mygreen,ultra thick] (3,1.35) circle [radius=1.8];
  \fill[color=mygreen3, opacity=0.2] (3,1.35) circle [radius=1.8];
  \node (Lend) at (4.1,1.3) {${\cal D}$};
  \node (Lend) at (4.5,2.8) {$\Gamma$};
  %
        \draw [solid, thick] (5.6,0.35) circle [radius=0.1];
        \draw [solid, thick] (5.2,1.75) circle [radius=0.1];
        \draw [solid, thick] (0.8,0.75) circle [radius=0.1];
        \draw [solid, thick] (0.8,1.75) circle [radius=0.1];
        \draw [solid, thick] (1.8,3.7) circle [radius=0.1];
        \draw [solid, thick] (3.2,-1.6) circle [radius=0.1];
        \draw [solid, thick] (1.8,-1.2) circle [radius=0.1];
    \end{tikzpicture}
\caption{Sought eigenvalues are denoted by red filled dots. 
Unwanted eigenvalues located outside the circumference 
$\Gamma$ (denoted by a green dashed curve) of the disk 
${\cal D}$ are denoted by black solid circles. 
\label{fig:example}}
\end{figure}

Rational filtering eigenvalue solvers can be seen as (harmonic) Rayleigh-Ritz 
procedures in which the projection subspace is built by exploiting 
a (complex) rational transformation of the matrix pencil 
$(A,M)$. These transformations are constructed so that the gap 
between eigenvalues located inside the disk ${\cal D}$ versus those 
located outside the latter is as big as possible after the transformation. 
Applying a projection scheme to the transformed pencil can 
then significantly enhance the convergence towards the sought invariant 
subspace. The most popular approaches to construct efficient rational 
transformations is either via shift-and-invert or via a discretization 
of the Cauchy integral representation of the eigenprojector along the 
boundary of the disk ${\cal D}$
\cite{beyn2012integral,FEAST,sakurai2003projection,sakurai2007cirr,ye2017fast,yin2017feast}.
Compared to shift-and-invert, contour integral eigensolvers 
are generally oblivious to the location of the sought eigenvalues inside 
the disk ${\cal D}$, and enjoy enhanced scalability when implemented in distributed 
memory computing environments \cite{aktulga2014parallel,iwase2017efficient,ddfeast,pfeast,YAMAZAKI2013280}. 
Other rational filters, though not necessarily based on contour integration, 
can be found in 
\cite{doi:10.1137/140984129,VANBAREL2016346,guttel2015zolotarev,kollnig2020rational,doi:10.1137/18M1170935,doi:10.1137/18M1228128,ruhe1998rational,winkelmann2017non,xi2016computing,doi:10.1137/16M1078409}. 

{
In this paper we consider algorithms in which the projection subspace 
is set equal to the column space of matrices formed after applying a 
rational transformation to the matrix pencil $(A,M)$. These rational 
transformations are constructed so that eigenvector directions associated 
with eigenvalues located outside the disk ${\cal D}$ are approximately 
mapped to zero, and the corresponding column spaces are captured through 
randomized range finders \cite{martinsson2019randomized,martinsson2020randomized}. } 
The algorithms proposed in this paper are also 
combined with matrix partitionings to reduce the computational complexity 
of the construction of the projection subspace. So far matrix partitioning 
approaches have been featured within the context of rational filtering 
only for symmetric eigenvalue problems 
\cite{ddfeast,kalantzis2018beyond}. One of the main motivations of this 
paper is to extend this class of techniques to non-Hermitian eigenvalue 
problems.  

Overall, the proposed framework possesses the following advantages:

\textbf{Improved robustness}. Classical rational filtering approaches such as
FEAST \cite{FEAST} or the SS algorithm \cite{sakurai2003projection,sakurai2007cirr} require 
an estimation of the number of eigenvalues located inside the disk ${\cal D}$. 
However, such an estimation is not always readily available or easy to compute for 
generalized eigenvalue problems. On the other hand, an inaccurate estimation can lead to 
slow convergence or failure to capture all required eigenpairs. The proposed algorithms 
bypass this issue by dynamically increasing the dimension of the projection subspace.

\textbf{Reduced complexity}. By combining rational filtering with substructuring, the projection subspace is formed as the direct sum of two separate subspaces approximated independently. This is done by applying a $2\times 2$ block partitioning to the pencil $(A,M)$. Two specialized algorithms are proposed to further reduce the 
computational costs associated with classical rational filtering eigensolvers.
The $2\times 2$ block partitioning can be created either in an ad hoc way or by applying a graph partitioner to the adjacency graph of the pencil $(A,M)$.

\textbf{Enhanced parallelism}. In addition to the ample opportunities for parallelism 
offered by rational filtering eigensolvers, the proposed algorithms can take advantage of an 
additional level of parallelism introduced by matrix substructuring.

The structure of this paper is organized as follows. Section \ref{sec2} describes 
a technique based on the combination of randomized range finders, harmonic Rayleigh-Ritz projections and rational transformations. Section \ref{sec:3} presents 
two variants based on matrix partitioning which aim at reducing the computational 
cost associated with the construction of an efficient projection subspace. Section \ref{sec4a} discusses practical details and presents computational cost comparisons.
Section \ref{sec4} provides numerical experiments on a few test problems. Finally, 
Section \ref{sec5} presents our concluding remarks.

\subsection{Notation} 

Throughout this paper we denote the spectrum of $(A,M)$ by $\Lambda(A,M)$. 
The total number of eigenvalues located inside the disk ${\cal D}$ is assumed 
unknown and is denoted by $n_{ev}$. The eigentriplets of the matrix pencil 
$(A,M)$ are denoted by $\left(\lambda_i,x^{(i)},\hat{x}^{(i)}\right),\ i=1,\ldots,n$, 
where $\lambda_i$ denotes the $i$th eigenvalue of smallest distance from 
the center of the disk ${\cal D}$, and $x^{(i)}$ and $\left(\hat{x}^{(i)}\right)^H$ 
denote the corresponding right and left eigenvectors, respectively. 
Notice that using the above definition we have $\lambda_1,\ldots,\lambda_{n_{ev}} 
\in {\cal D}$ and $\lambda_{n_{ev}+1},\ldots,\lambda_n \notin {\cal D}$.
The superscript “$H$” denotes the conjugate transpose of the corresponding matrix. 
Unless mentioned otherwise, the term “eigenvector" should be understood to 
refer to a right eigenvector. Throughout the rest of this paper we use 
the notation $\texttt{rank}(X)$, $\texttt{orth}(X)$, and $\texttt{range}(X)$ 
to denote the rank, orthonormalization, and range (column space) of the $m\times n$ 
matrix $X$, respectively. Moreover, we use the 
notation $\texttt{span}\left(r^{(1)},\ldots,r^{(k)}\right)$ to denote the linear 
span of vectors $r^{(1)},\ldots,r^{(k)}$. 

\section{Harmonic Rayleigh-Ritz projections and randomized range finders for column spaces of matrix functions} 
\label{sec2}

Computing a few exterior eigenvalues and associated eigenvectors 
of large and sparse matrix pencils is typically achieved via 
applying a Rayleigh-Ritz procedure (RR) onto a (nearly) invariant 
subspace associated with the sought eigenvalues \cite{parlett1998symmetric}.
For Hermitian eigenvalue problems, the RR procedure retains several 
optimality properties, e.g., see \cite{li2015rayleigh}. For non-Hermitian 
eigenvalue problems no such optimality is guaranteed, e.g., when the 
sought eigenvalues are located in the interior of the spectrum, 
e.g., inside a disk ${\cal D}$ surrounded by several unwanted 
eigenvalues, the RR procedure might provide poor results \cite{morgan1998harmonic}. 

An alternative for the solution of interior eigenvalue problems is 
the harmonic Rayleigh-Ritz procedure (HRR) suggested in 
\cite{morgan1991computing}. More specifically, let matrix $Z$ 
represent a basis of some projection subspace ${\cal Z}$. The HRR 
procedure extracts approximate eigenpairs of the form $(\theta,Zq)$ 
by solving the following eigenvalue problem 
\begin{equation}\label{hrr}
    Z^H(A-\zeta_c M)^H(A-\zeta_c M)Zq = (\theta-\zeta_c)Z^H
    (A-\zeta_c M)^HMZq,\ \ \ \zeta_c \in \mathbb{C}.
\end{equation}
For eigenvalue problems such as the ones considered in this paper, it is 
reasonable to set $\zeta_c$ equal to the center of the disk ${\cal D}$. 
The approximate eigenvalue $\theta$ and eigenvector $Zq$ are referred to 
as (harmonic) Ritz value and Ritz vector, respectively. 
In practice, if the subspace ${\cal Z}$ includes the sought invariant 
subspace, then (\ref{hrr}) will return accurate approximations of the 
corresponding eigenpairs provided that there are no spurious eigenvalues 
close to the Ritz values located 
inside the disk ${\cal D}$ \cite{jia2005convergence,morgan1991computing}.

Based on the above discussion, we seek to compute a subspace ${\cal Z}$ 
which includes the invariant subspace associated with $n_{ev}$ sought 
eigenvalues $\lambda_1,\ldots,\lambda_{n_{ev}}$. This section considers 
ansatz subspaces of the form ${\cal Z}=\texttt{range} \left(\rho(M^{-1}A)\right)$ 
for some scalar function $\rho$ such that $\rho(M^{-1}A)$ is rank-deficient. 
The rest of this section considers such a function $\rho$ while it also 
discusses a randomized algorithm to compute the range of rank-deficient 
matrices. 

\subsection{Fast randomized range finder for rank-deficient matrices}

{Let $X\in \mathbb{C}^{m\times n}$ be a rectangular matrix. 
The goal of a range finding procedure is to compute an orthonormal matrix $Y$ 
such that $\|(I-YY^H)X\|$ is zero. In this paper we are interested in scenarios 
where matrix $X$ is rank-deficient and accessible only through a Matrix-Vector 
product routine.}

{Let $k\in \mathbb{N},\ k< {\rm min}(m,n)$, denote the 
rank of matrix $X$. The range of matrix $X$ is equal to the span of 
the left-singular vectors corresponding to the $k$ non-zero singular values. 
The span of these left-singular vectors can be computed in a matrix-free 
fashion by Lanczos bidiagonalization (LBD) \cite{golub1965calculating}. In 
the absence of round-off errors, LBD requires $k$ Matrix-Vector products with 
each of the matrices $X$ and $X^H$, in addition to the cost introduced by 
the chosen orthogonalization strategy, e.g., see \cite{hernandez2007restarted}. 
Alternatively, we can apply $k$ steps of the Lanczos process on $XX^H$ 
\cite{lanczos1950iteration}, but this approach still requires $2k$ Matrix-Vector 
products overall.}

\vspace{0.1in}
\noindent\fbox{%
\begin{minipage}{\dimexpr\linewidth-9\fboxsep-9\fboxrule\relax}
\vbox{
\begin{algorithm0}{Randomized range finding algorithm} \label{alg:2}
\betab
\>0.\> Inputs: $X\in \mathbb{C}^{m\times n},\ Y:=0$ \\
\>1a.\> For $i=1,\ldots,{\rm min}(m,n)$\\
\>2.\> \> Fill $r\in \mathbb{C}^n$ with normally distributed random entries\\
\>3.\> \> $Y=[Y,Xr]$\\
\>4.\>\> Set the $i \times 1$ vector $\sigma^{(Y)}$ equal to the (sorted) singular 
\\ \>\>\> values of matrix $Y$\\
\>5.\> \> If $\sigma_i^{(Y)}/\sigma_1^{(Y)} \leq {\rm machine\ epsilon}$, break; \\
\>1b.\> End\\
\>6.\> Orthonormalize and return $Y$
\entab
\end{algorithm0}
}\vspace{0.05in}
\end{minipage}
}
\vspace{0.05in}

{
The complexity of the range finding problem can be reduced by considering 
techniques from randomized linear algebra \cite{drineas2016randnla,liberty2007randomized,martinsson2020randomized}. 
Randomized numerical algorithms have gained significant prominence over 
the last two decades due to their superior performance in several important 
numerical linear algebra problems, e.g., low-rank matrix approximations \cite{halko2011finding,mahoney2009cur} 
and principal component analysis \cite{bose2019terapca,rokhlin2010randomized}. 
Returning to the range finding problem, let $R\in \mathbb{C}^{n\times k}$ be 
a matrix whose entries are drawn from a Gaussian distribution. Then, with 
probability one, we have  
$\texttt{rank}(XR)=k$ and $\texttt{range}(XR)=\texttt{range}(X)$ 
\cite{martinsson2019randomized}. Thus, a randomized range finder requires only 
half of the Matrix-Vector products performed by LBD or Lanczos. Our interest 
lies in scenarios where the exact rank of matrix $X$ is either unknown or 
expensive to estimate. To bypass this issue, next we consider a modification 
of the randomized range finder where the Matrix-Vector products with matrix $X$ 
are performed in an incremental manner and no information regarding $k$ is 
needed.}

{
Let $r^{(i)}\in \mathbb{C}^n,\ i=1,2,\ldots$, denote a sequence of vectors with 
normally randomly distributed entries, and $\left[Xr^{(1)},Xr^{(2)},\ldots\right]$ 
denote the evolving matrix in which we accumulate the products of matrix $X$ with 
$r^{(i)}$. After $k$ such products, the rank (and range) of the evolving matrix 
is equal to that of matrix $X$. Since the following Matrix-Vector products 
$Xr^{(i)},\ i=k+1,k+2,\ldots$, already lie in $\mathtt{range}(X)$, the evolving 
matrix will become singular. Therefore, we can bypass the unknown rank of matrix 
$X$ by monitoring the singular values of the evolving matrix. The above approach 
is listed as Algorithm \ref{alg:2}. The procedure terminates when the ratio of 
the smallest to the largest singular value of the matrix $[Xr^{(1)},Xr^{(2)},\ldots]$ 
becomes zero, which in a numerical computing environment translates to smaller 
than or equal to the machine epsilon.\footnote{We consider a 
number to be equal to zero if its numerical value is less than the machine 
epsilon of the IEEE 754 binary64 definition.} In the absence of round-off errors, 
Algorithm \ref{alg:2} terminates after $k+1$ iterations. The SVD of the evolving 
matrix in Step 3 can be updated on-the-fly each time a new column is added \cite{kalantzis2020projection,zha1999updating}. We note here that Algorithm 
\ref{alg:2} can be also seen as a variation of the adaptive range finder described 
in \cite[Section 4]{halko2011finding} with the exception that the stopping 
criterion is based on the magnitude of the condition number of the evolving matrix. }


{
While our interest lies in computing the exact $\mathtt{range}(X)$, in practice 
we stop the iterative procedure in Algorithm \ref{alg:2} when the ratio of 
the smallest to the largest singular value becomes less than a small threshold, 
e.g., $10^{-12}$. This helps to avoid orthonormalizing an ill-conditioned basis 
at the last step of Algorithm \ref{alg:2}, e.g., see \cite{giraud2002modified}. 
If Algorithm \ref{alg:2} terminates after $k_0+k_1<k+1$ iterations, then, with 
probability at least $1-6k_{1}^{-k_{1}}$, the matrix $Y = \texttt{orth}(X\left[r^{(1)},\ldots,r^{(k_0+k_1)}\right])$ satisfies 
\cite{halko2011finding}:}
\begin{equation} \label{martinsson}
    \|(I-YY^H)X \|_2 \leq \left(1+11\sqrt{k_0+k_1}\sqrt{{\rm min}(m,n)}\right)
    \sigma_{k_0+1}(X),
\end{equation}
{where $\sigma_j(X)$ denotes the $j$th singular value of matrix $X$. 
Therefore, when the singular values of matrix $X$ decay fast enough, Algorithm 
\ref{alg:2} can still return a good approximation of $\mathtt{range}(X)$ in 
less than $k+1$ iterations. Note though that we can not predict predict the number 
of iterations associated with a higher stop tolerance in Algorithm \ref{alg:2}. }

\subsection{Column spaces of matrix functions as projection subspaces}

Let $\rho: \mathbb{C}^*\rightarrow \mathbb{R},\ \mathbb{C}^* \subseteq \mathbb{C}$, 
be a scalar function that is defined over $\Lambda(A,M)$. Since $(A,M)$ is 
diagonalizable, applying the function $\rho$ to matrix $M^{-1}A$ is equivalent to
\begin{equation}\label{eq:sum0}
\rho(M^{-1}A)  = \sum_{i=1}^{n} \rho(\lambda_i)x^{(i)} 
\left(\hat{x}^{(i)}\right)^H M.
\end{equation}
Notice now that $\texttt{span}\left(x^{(1)},\ldots,x^{(n_{ev})}\right)
\subseteq \texttt{range}(\rho(M^{-1}A))$ for any function $\rho$ such that 
$\rho(\lambda_i) \neq 0,\ i=1,\ldots,n_{ev}$. 
Algorithm \ref{alg:00} outlines a two-step procedure to approximate the 
eigenvalues located inside the disk ${\cal D}$ and associated eigenvectors. 
The first step is to compute an orthonormal basis matrix $Z$ of 
$\texttt{range}(\rho(M^{-1}A))$ by calling Algorithm \ref{alg:2}. 
{The number of iterations performed by Algorithm 
\ref{alg:2} is bounded by the number of eigenvalues $\lambda$ that 
satisfy $\rho(\lambda) \neq 0$.} Therefore, the scalar function $\rho$ 
should be set such that $\rho(\lambda_i)$ is about equal to machine precision 
for as many eigenvalues $\lambda_i \notin {\cal D}$ as possible. 
The second step is to perform a HRR projection step to approximate 
the eigenvalues located inside ${\cal D}$ and their associated 
eigenvectors. Note that no information about the value of $n_{ev}$ 
is required.

\vspace{0.1in}
\noindent\fbox{%
\begin{minipage}{\dimexpr\linewidth-9\fboxsep-9\fboxrule\relax}
\vbox{
\begin{algorithm0}{Prototype algorithm} \label{alg:00}
\betab
\>0.\> Inputs: $\rho: \mathbb{C} \rightarrow \mathbb{R},\ {\cal D}$\\
\>1.\> Compute an orthonormal basis $Z$ of $\mathtt{range}(\rho(M^{-1}A))$ \\ 
\> \> by Algorithm \ref{alg:2}\\
\>2.\> Solve the eigenvalue problem in (\ref{hrr}) and return all \\\>\>
Ritz values $\theta \in {\cal D}$ and associated Ritz vectors
\entab
\end{algorithm0}
}\vspace{0.05in}
\end{minipage}
}
\vspace{0.1in}

Motivated by the above discussion, an ideal function $\rho$ is defined by the 
contour integral
\begin{equation} \label{eq:contour}
{\cal P}(\zeta) = \dfrac{-1}{2\pi i}\int_{\Gamma}\dfrac{1}{\zeta-\nu} d\nu,
\end{equation}
where the complex contour $\Gamma$ denotes the circumference of the disk ${\cal D}$, 
and the integration is performed counter-clockwise. By Cauchy's residue theorem it 
follows that ${\cal P}(\zeta)=1$ for any $\zeta \in {\cal D}$, and zero otherwise. 
Applying (\ref{eq:contour}) to (\ref{eq:sum0}) yields 
\begin{equation} \label{eqqq1}
\begin{aligned}
{\cal P}(M^{-1}A) & = \dfrac{-1}{2\pi i}\int_{\Gamma} \left(M^{-1}A-\nu I\right)^{-1} d\nu 
= \sum_{i=1}^{n_{ev}} x^{(i)} \left(\hat{x}^{(i)}\right)^H M.
\end{aligned}
\end{equation}
Algorithm \ref{alg:00} then terminates after exactly $n_{ev}$ iterations. 

In practice, \eqref{eq:contour} will be approximated by numerical quadrature 
which leads to a rational “filter" function of the form
\begin{equation} \label{ratfilter}
\rho(\zeta) = \sum_{j=1}^N \dfrac{\omega_j}{\zeta-\zeta_j},
\end{equation}
where the integer $N$ denotes the order of the approximation, and the complex 
pairs $\{\omega_j,\zeta_j\}_{j=1,\ldots,N}$ denote the weights and nodes of 
the quadrature rule, respectively. Rational filter functions of 
this form were pioneered in the context of eigenvalue solvers first 
in \cite{asakura2009numerical,beyn2012integral,FEAST,sakurai2003projection}. 
The application of (\ref{ratfilter}) to the pencil $(A,M)$ then gives
\begin{equation} \label{eq:sum}
\rho(M^{-1}A) = \sum_{j=1}^{N} \omega_j (M^{-1}A-\zeta_j I)^{-1} 
= \sum_{j=1}^{N} \omega_j (A-\zeta_j M)^{-1} M, 
\end{equation}
and computing $\rho(M^{-1}A)r$ for a vector $r$ at each iteration of Algorithm 
\ref{alg:00} involves: a) one Matrix-Vector product  
with matrix $M$, and b) the solution of one linear system with 
each matrix $A-\zeta_j M,\ j=1,\ldots,N$. 
These $N$ linear system solutions can be obtained in parallel 
by replicating matrices $A$ and $M$ in $N$ different groups of 
processors. 

Ideally, the function in (\ref{ratfilter}) should decay to zero as 
$\zeta$ moves away from ${\cal D}$. Figure 
\ref{filtermag} plots the modulus of a rational filter $\rho(\zeta)$ 
defined on the unit disk (${\cal D}\equiv \{|z|:|z|\leq 1\}$) with the 
trapezoidal rule of order $N=8$ (left) and $N=16$ (right). Increasing 
the value of $N$ leads to a faster decay of the rational filter 
$\rho(\zeta)$ outside the boundary of ${\cal D}$. In particular, the 
approximation of ${\cal P}(\zeta)$ by $\rho(\zeta)$ at the center of 
the disk ${\cal D}$ converges exponentially\footnote{Note that eigenvalues 
$\lambda$ located very close to the poles $\zeta_j$ can lead to values 
$\rho(\lambda)$ which are larger than one even if $\lambda \notin {\cal D}$.} 
with respect to $N$ \cite{doi:10.1137/130931035,trefethen2014exponentially}. 
\begin{figure} 
\centering
\includegraphics[width=0.49\textwidth]{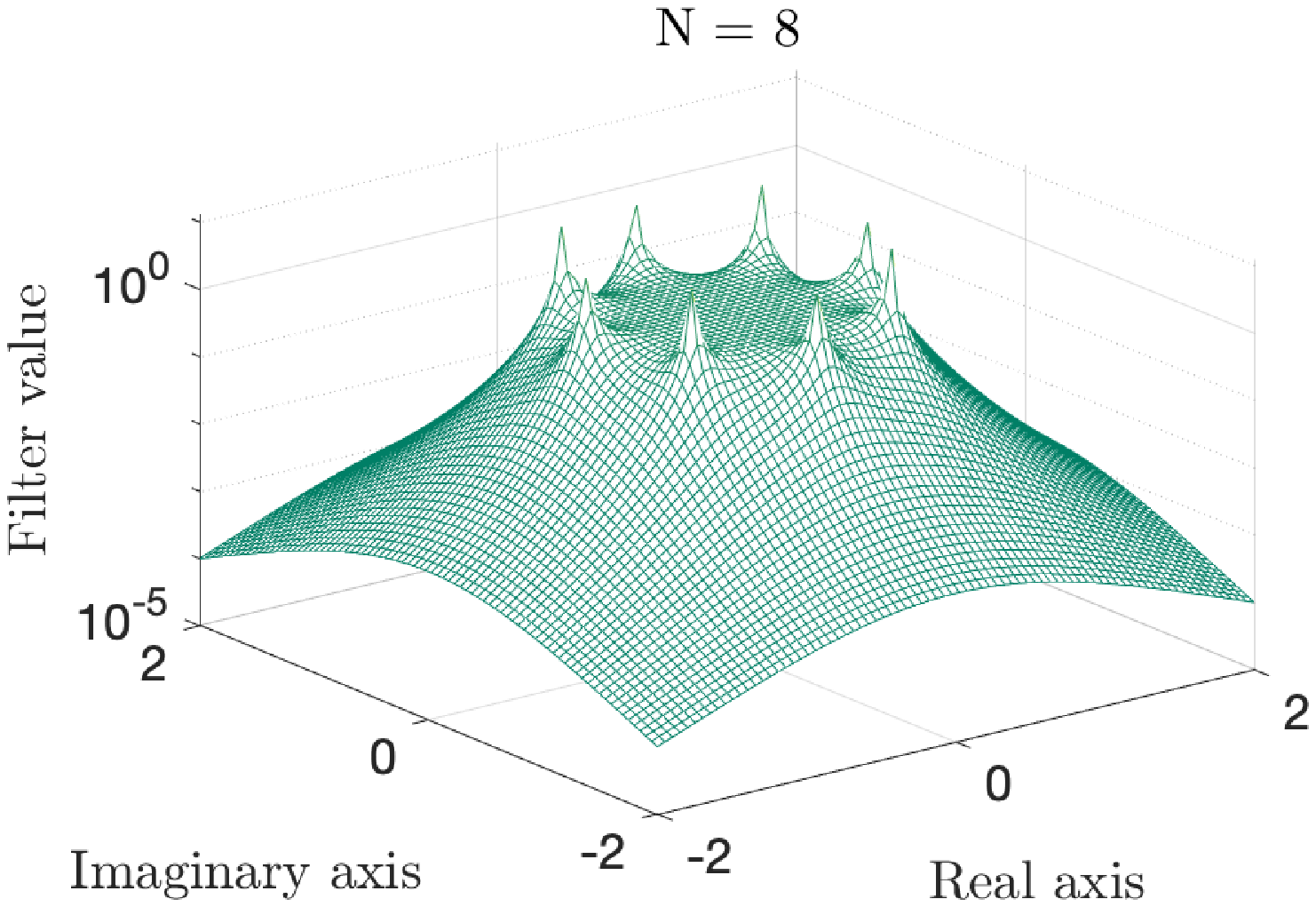}
\includegraphics[width=0.49\textwidth]{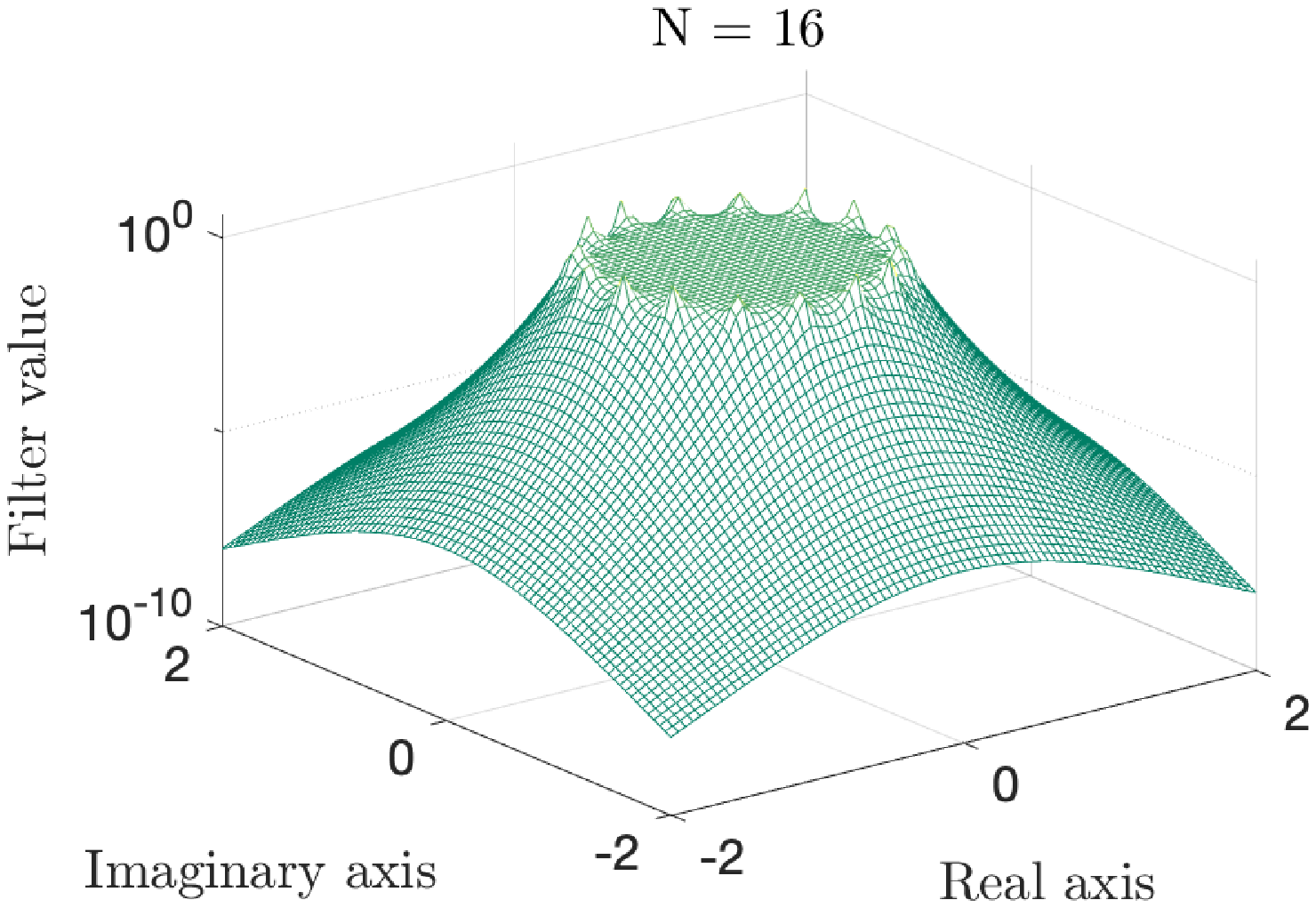}
\caption{The modulus of the rational filter $\rho(\zeta)$ defined 
on the unit disk with the trapezoidal rule of order $N=8$ (left) and 
$N=16$ (right). \label{filtermag}} 
\end{figure}

The convergence of Algorithm \ref{alg:2} is likely to be slow for small 
values of $N$ since $\texttt{range}(\rho(M^{-1}A))$ could 
contain many eigenvector directions associated with a large number of eigenvalues located outside 
${\cal D}$. As a result, subspace iteration might be a better alternative 
in this case, and this is exploited in the FEAST eigenvalue solver library 
\cite{peter2014feast,FEAST}. The drawback of subspace iteration as a 
projection scheme is that a good estimation of $n_{ev}$ is necessary (e.g., 
see \cite{di2016efficient,Yin2018,yin2019harmonic}), a condition which is 
bypassed by Algorithm \ref{alg:00}.

Throughout the rest of this paper we describe two variants which aim at 
reducing the computational cost of Algorithm \ref{alg:00}.

\section{Algorithms based on matrix partitionings} \label{sec:3}

Let $d,\ s \in \mathbb{N}$, such that $n=s+d$, and partition each eigenvector 
$x^{(i)},\ i=1,\ldots,n,$ of the pencil $(A,M)$ as
\begin{equation} \label{eig_part}
\ \ \ \ \ \ 
x^{(i)}=
 \begin{pmatrix}
   u^{(i)}                                     \\[0.3em]
   y^{(i)}                                     \\[0.3em]
  \end{pmatrix},\ u^{(i)} \in \mathbb{C}^{d},\ y^{(i)}\in \mathbb{C}^s.
\end{equation}
In addition, let $0_{\chi,\psi}$ denote the zero matrix of size $\chi \times \psi$. 
Then, we can write
\begin{align} \label{ideal}
 \texttt{span} \left(x^{(1)},\ldots,x^{(n_{ev})} \right) 
 & =
 \texttt{span} 
 \left( \begin{bmatrix}
 u^{(1)},\ldots,u^{({n_{ev}})} \\[0.3em]
 0_{s,n_{ev}} \\[0.3em]
 \end{bmatrix}+
 \begin{bmatrix}
 0_{d,n_{ev}} \\[0.3em]
 y^{(1)},\ldots,y^{({n_{ev}})} \\[0.3em]
 \end{bmatrix}
 \right) \\
 & \subseteq
 \texttt{span} 
 \left( \begin{bmatrix}
 u^{(1)},\ldots,u^{({n_{ev}})} \\[0.3em]
 0_{s,n_{ev}} \\[0.3em]
 \end{bmatrix}\right) \oplus 
 \texttt{span} \left(\begin{bmatrix}
 0_{d,n_{ev}} \\[0.3em]
 y^{(1)},\ldots,y^{({n_{ev}})} \\[0.3em]
 \end{bmatrix}
 \right).
 \end{align}
The expression in (\ref{ideal}) implies that $\texttt{span} \left(x^{(1)},\ldots,x^{(n_{ev})} \right)$ is captured by the direct sum of 
$\texttt{span} \left(u^{(1)},\ldots,u^{(n_{ev})}\right)$ and 
$\texttt{span} \left(y^{(1)},\ldots,y^{(n_{ev})} \right)$. 
The rest of this section describes two variations of Algorithm 
\ref{alg:00}, presented in Sections \ref{first_alg} and \ref{second_alg}. 
These algorithms make use of matrix partitioning to reduce the computational 
costs associated with the construction of a good HRR projection subspace.

\subsection{Two equivalent matrix resolvent representations}

Consider the following $2\times 2$ block-partitioning of the non-Hermitian 
matrices $A$ and $M$:
\begin{equation}\label{eq1}
A = 
\begin{pmatrix} 
B     &  F \cr    E   &  C 
\end{pmatrix} \quad\text{and}\quad
M = 
\begin{pmatrix} 
M_B   &  M_F \cr  M_E & M_C 
\end{pmatrix},
\end{equation}
where $B,\ M_B \in \mathbb{C}^{d\times d},\ F,\ M_F \in \mathbb{C}^{d\times s},\ 
E,\ M_E \in \mathbb{C}^{s\times d}$, and $C,\ M_C \in \mathbb{C}^{s\times s}$.
Moreover, define the following matrix-valued functions of 
$\zeta \in \mathbb{C}$:
\begin{equation*}\label{def0}
 B(\zeta) =B-\zeta M_B,\ \ F(\zeta) =F-\zeta M_F,\ \ E(\zeta) =E-\zeta M_E,\ {\rm and} \ \ 
 C(\zeta) =C-\zeta M_C.
\end{equation*}

For any $\zeta \notin \Lambda(A,M)$, the matrix $(A-\zeta M)^{-1}$ can be written as
{\normalsize \begin{equation} \label{inv_dd2}
  (A-\zeta M)^{-1}=
  \begin{pmatrix}
   B(\zeta)^{-1}\left[ I + F(\zeta)S(\zeta)^{-1}E(\zeta)B(\zeta)^{-1}\right]   &   -B(\zeta)^{-1}F(\zeta)S(\zeta)^{-1}    \\[0.3em]
   -S(\zeta)^{-1}E(\zeta)B(\zeta)^{-1}       &   S(\zeta)^{-1}                                           \\[0.3em]
  \end{pmatrix},
\end{equation}}
where the $s\times s$ matrix-valued function
\begin{equation*}
S(\zeta) = C(\zeta) -E(\zeta)B(\zeta)^{-1}F(\zeta)
\end{equation*}
is the \emph{Schur complement} of matrix $A-\zeta M$. 
Combining (\ref{inv_dd2}) with (\ref{eq:sum}) then gives
\begin{equation}\label{num_cont_big}
{\footnotesize  
\rho(M^{-1}A) =\sum_{j=1}^{N} \omega_j
\begin{bmatrix}
   B(\zeta_j)^{-1}\left[I + F(\zeta_j)S(\zeta_j)^{-1}E(\zeta_j)B(\zeta_j)^{-1}\right] & -  B(\zeta_j)^{-1}F(\zeta_j)S(\zeta_j)^{-1} \\[0.3em]
 - S(\zeta_j)^{-1}E(\zeta_j)B(\zeta_j)^{-1}                                                            &  S(\zeta_j)^{-1}   \\[0.3em]
\end{bmatrix} M.}
\end{equation}

Similarly, the matrix $(A-\zeta_j M)^{-1}$ can be expressed 
in terms of the eigenvectors of the matrix pencil $(A,M)$ as
\begin{equation} \label{pole_eig} 
\begin{aligned}
(A-\zeta_j M)^{-1} 
                & = \sum_{i=1}^n \dfrac{x^{(i)} \left(\hat{x}^{(i)}\right)^H}{\lambda_i-\zeta_j}. 
\end{aligned}
\end{equation}
Then, by partitioning the left eigenvectors of the pencil $(A,M)$ 
as in (\ref{eig_part}),
\begin{equation*} \label{eig_part2}
  \left(\hat{x}^{(i)}\right)^H=
 \begin{bmatrix}
   \left(\hat{u}^{(i)}\right)^H & \left(\hat{y}^{(i)}\right)^H
  \end{bmatrix},\ \left(\hat{u}^{(i)}\right)^H \in \mathbb{C}^{1\times d},\ \left(\hat{y}^{(i)}\right)^H \in \mathbb{C}^{1\times s},
\end{equation*}
and combining (\ref{eq:sum}) with (\ref{pole_eig}), we obtain the following 
identity:
\begin{equation}\label{lala}
\rho(M^{-1}A) = \sum_{i=1}^n \rho(\lambda_i)
\begin{bmatrix}
   u^{(i)} \left(\hat{u}^{(i)}\right)^H  &  u^{(i)} \left(\hat{y}^{(i)}\right)^H   \\[0.3em] 
   y^{(i)} \left(\hat{u}^{(i)}\right)^H &  y^{(i)} \left(\hat{y}^{(i)}\right)^H
\end{bmatrix}M.
\end{equation}

\subsection{First algorithm} \label{first_alg}

In this section we present an algorithm which exploits the equivalent 
representations of $\rho(M^{-1}A)$ shown in \eqref{num_cont_big} and 
\eqref{lala} to build a subspace which captures 
$\texttt{span} \left(u^{(1)},\ldots,u^{(n_{ev})} \right)$ and 
$\texttt{span} \left(y^{(1)},\ldots,y^{(n_{ev})} \right)$. 

Equating the (1,2) and (2,2) blocks on the right-hand sides of (\ref{num_cont_big}) 
and (\ref{lala}) gives
\begin{equation} \label{eq:nb1}
\begin{aligned}
 -\sum_{j=1}^{N} \omega_jB(\zeta_j)^{-1}F(\zeta_j)S(\zeta_j)^{-1}  &= 
 \sum_{i=1}^{n} \rho(\lambda_i) u^{(i)} \left(\hat{y}^{(i)}\right)^H,\\
 \sum_{j=1}^{N} \omega_jS(\zeta_j)^{-1}  &= 
 \sum_{i=1}^{n} \rho(\lambda_i) y^{(i)} \left(\hat{y}^{(i)}\right)^H.
 \end{aligned}
\end{equation}
These identities indicate that, under mild conditions, we can capture 
a superset of $\texttt{span} \left(u^{(1)},\ldots,u^{(n_{ev})} \right)$ and 
$\texttt{span} \left(y^{(1)},\ldots,y^{(n_{ev})} \right)$ by capturing the 
range of the matrices on the left-hand side in (\ref{eq:nb1}). 
\begin{theorem}
\label{pro2}
Let $\left[u^{(i)}\right]_{\rho(\lambda_i)\neq 0},\left[y^{(i)}\right]_{\rho(\lambda_i)
\neq 0}$, and $\left[\hat{y}^{(i)}\right]_{\rho(\lambda_i)\neq 0}$, denote the 
matrices whose columns are formed by those vectors $u^{(i)},\ y^{(i)}$, and 
$\hat{y}^{(i)}$, for which $\rho(\lambda_i) \neq 0,\ i=1,\ldots,n$, respectively. If 
the rank of matrices 
$\sum_{j=1}^{N} \omega_jB(\zeta_j)^{-1}F(\zeta_j)S(\zeta_j)^{-1}$ and $\sum_{j=1}^{N} \omega_j S(\zeta_j)^{-1}$ is equal to that of matrices $\left[u^{(i)}\right]_{\rho(\lambda_i)\neq 0}$ and $\left[y^{(i)}\right]_{\rho(\lambda_i)\neq 0}$, respectively, then:
\begin{equation} \label{eq:nb2}
\mathtt{range}\left(\left[u^{(i)}\right]_{\rho(\lambda_i)\neq 0}\right) = 
\mathtt{range}\left(\sum_{j=1}^{N} \omega_jB(\zeta_j)^{-1}F(\zeta_j)S(\zeta_j)^{-1} \right),
\end{equation}and
\begin{equation}\label{eq:nb22}
\mathtt{range}\left(\left[y^{(i)}\right]_{\rho(\lambda_i)\neq 0}\right) =
\mathtt{range}\left(\sum_{j=1}^{N} \omega_jS(\zeta_j)^{-1}  \right).
\end{equation}

\end{theorem}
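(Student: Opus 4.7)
The plan is to derive both identities directly from the pointwise representations in equation (\ref{eq:nb1}), then close the argument with the standard fact that a subspace of a finite-dimensional subspace having the same dimension must coincide with it.

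First, I would rewrite the right-hand sides of equation (\ref{eq:nb1}) in factored form. Since any term with $\rho(\lambda_i)=0$ contributes nothing to either sum, the identities read
\begin{equation*}
-\sum_{j=1}^{N} \omega_{j} B(\zeta_j)^{-1}F(\zeta_j)S(\zeta_j)^{-1}
 = U_\rho\, D_\rho\, \hat{Y}_\rho^H,
\qquad
\sum_{j=1}^{N} \omega_{j} S(\zeta_j)^{-1}
 = Y_\rho\, D_\rho\, \hat{Y}_\rho^H,
\end{equation*}
where I introduce the shorthand $U_\rho = [u^{(i)}]_{\rho(\lambda_i)\neq 0}$, $Y_\rho = [y^{(i)}]_{\rho(\lambda_i)\neq 0}$, $\hat{Y}_\rho = [\hat{y}^{(i)}]_{\rho(\lambda_i)\neq 0}$, and $D_\rho$ is the diagonal matrix collecting the nonzero values $\rho(\lambda_i)$ (so $D_\rho$ is invertible by construction).

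From these factorizations, the inclusions
$\mathtt{range}\!\left(\sum_{j} \omega_j B(\zeta_j)^{-1}F(\zeta_j)S(\zeta_j)^{-1}\right) \subseteq \mathtt{range}(U_\rho)$
and
$\mathtt{range}\!\left(\sum_{j} \omega_j S(\zeta_j)^{-1}\right) \subseteq \mathtt{range}(Y_\rho)$
are immediate, since every column of the factored matrix is an element of the column space of the leftmost factor. At this point the rank hypothesis enters: it asserts exactly that the dimensions of the two subspaces on each side agree. Two finite-dimensional subspaces in a containment relation with equal dimension must coincide, which yields the desired equalities \eqref{eq:nb2} and \eqref{eq:nb22}.

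There is no real obstacle beyond verifying the factored form and invoking the rank-equality dimension argument; the main conceptual point is to observe that collapsing the sum to the indices with $\rho(\lambda_i)\neq 0$ and peeling off the invertible diagonal $D_\rho$ does not alter the column space of the leftmost factor, so the rank assumption provides the only non-trivial piece of information needed. I would also briefly remark that a sufficient (and natural) condition for the rank hypothesis to hold is that the $\hat{y}^{(i)}$ with $\rho(\lambda_i)\neq 0$ span enough directions for $\hat{Y}_\rho^H$ to act as a surjection onto $\mathbb{C}^{\mathrm{rank}(U_\rho)}$ (and similarly for $Y_\rho$), since then no cancellation can reduce the column rank of the product.
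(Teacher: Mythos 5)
Your argument is correct and follows the same route as the paper: both proofs exploit the outer-product factorization coming from equation \eqref{eq:nb1}, absorb (or peel off) the invertible diagonal of nonzero $\rho(\lambda_i)$ values, and then use the rank hypothesis to promote the obvious range inclusion to a range equality. The only cosmetic difference is that the paper folds the scalars $\rho(\lambda_i)$ into the columns and then cites the rank-product fact directly, whereas you isolate them in an explicit diagonal factor $D_\rho$ and invoke the dimension-plus-containment argument, which is the same fact in slightly different clothing.
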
 
\begin{proof}
First, notice that
 $\texttt{range}\left(\left[\rho(\lambda_i)u^{(i)}\right]_{\rho(\lambda_i)\neq 0}\right) = \texttt{range}\left(\left[u^{(i)}\right]_{\rho(\lambda_i)\neq 0}\right)$, and 
 $\texttt{range}\left(\left[\rho(\lambda_i)y^{(i)}\right]_{\rho(\lambda_i)\neq 0}\right) = \texttt{range}\left(\left[y^{(i)}\right]_{\rho(\lambda_i)\neq 0}\right).$ 
Second, we have 
$$\sum_{j=1}^{N} \omega_jB(\zeta_j)^{-1}F(\zeta_j)S(\zeta_j)^{-1} = 
  \left[\rho(\lambda_i)u^{(i)}\right]_{\rho(\lambda_i)\neq 0} \left[\hat{y}^{(i)}\right]_{\rho(\lambda_i)\neq 0}^H,$$ and 
 $$\sum_{j=1}^{N} \omega_jS(\zeta_j)^{-1} = \left[\rho(\lambda_i)y^{(i)}\right]_{\rho(\lambda_i)\neq 0}  \left[\hat{y}^{(i)}\right]_{\rho(\lambda_i)\neq 0}^H.$$ 
 Recall now that for two matrices $X_1$ and $X_2$, if the rank of the matrix $X_1X_2$ 
 is equal to that of $X_1$, then the span of the columns of $X_1$ is equal to the range 
 of $X_1X_2$. The results in (\ref{eq:nb2}) and \eqref{eq:nb22} 
 follow directly by setting  $X_1=\left[\rho(\lambda_i) u^{(i)}\right]_{\rho(\lambda_i)\neq 0}$ in \eqref{eq:nb2} 
 and $X_1=\left[\rho(\lambda_i) y^{(i)}\right]_{\rho(\lambda_i)\neq 0}$ 
 in \eqref{eq:nb22}, respectively, while 
 $X_2=\left[\hat{y}^{(i)}\right]_{\rho(\lambda_i)\neq 0}^H$.
\end{proof}

Theorem \ref{pro2} implies that a necessary condition for \eqref{eq:nb2} 
and \eqref{eq:nb22} to hold is
{\small 
\begin{equation}\label{eq:nb3}
{\rm max}\left(\texttt{rank}\left(\left[u^{(i)}\right]_{\rho(\lambda_i)\neq 0}\right),
\texttt{rank}\left(\left[y^{(i)}\right]_{\rho(\lambda_i)\neq 0}\right)\right) 
\leq \texttt{rank}\left(\left[\hat{y}^{(i)}\right]_{\rho(\lambda_i)\neq 0}\right).
\end{equation}
}
For symmetric eigenvalue problems we have $y^{(i)}=\hat{y}^{(i)}$ and 
(\ref{eq:nb3}) is trivially satisfied \cite{kalanthesis}. In practice, 
the violation of \eqref{eq:nb3} for non-Hermitian eigenvalue problems 
is quite rare in a finite-precision arithmetic environment. 

Algorithm \ref{alg:0} outlines a matrix partitioning procedure to build 
the HRR projection subspace in (\ref{hrr}) by setting the latter subspace 
equal to the direct sum of subspaces $\texttt{range} \left(\sum_{j=1}^{N} 
\omega_jB(\zeta_j)^{-1} F(\zeta_j)S(\zeta_j)^{-1}\right)$ and 
$\texttt{range} \left(\sum_{j=1}^{N} \omega_jS(\zeta_j)^{-1}\right)$. 
Each instance of Algorithm \ref{alg:2} called in Algorithm \ref{alg:0} 
performs a number of iterations which is at most equal to the number 
of eigenvalues $\lambda$ for which $\rho(\lambda) \neq 0$. Moreover,  
the two instances of Algorithm \ref{alg:2} shown in Steps 1 and 2 are 
performed in parallel and thus the linear system solutions  computed 
in Step 2 are exploited at Step 1 as well. Moreover, similarly to 
Algorithm \ref{alg:00}, Algorithm \ref{alg:0} requires no estimation 
of the value of $n_{ev}$. 


\vspace{0.1in}
\noindent\fbox{%
\begin{minipage}{\dimexpr\linewidth-9\fboxsep-9\fboxrule\relax}
\vbox{
\begin{algorithm0}{~} \label{alg:0}
\betab
0a.\> Inputs: $N,\ {\cal D}$ \\
0b.\> Compute the complex pairs $\{\omega_j,\zeta_j\}_{j=1,2,\ldots,N}$, set $G:=W:=0$    \\
0c.\> (Optionally) Reorder $(A,M)$ as in Section \ref{ddtec}    \\
1.\> Compute an orthonormal basis $G$ of $\texttt{range} \left(\sum_{j=1}^{N} \omega_jS(\zeta_j)^{-1}\right)$\\ \>by Algorithm \ref{alg:2} \\
2.\> Compute an orthonormal basis $W$ of $\texttt{range} \left(\sum_{j=1}^{N} 
\omega_jB(\zeta_j)^{-1} F(\zeta_j)S(\zeta_j)^{-1}\right)$\\ \>by Algorithm \ref{alg:2} \\
3.\> Set $Z = \begin{bsmallmatrix} W & \\ & G \end{bsmallmatrix}$, 
solve the eigenvalue problem in (\ref{hrr}) and return \\\> all
Ritz values $\theta \in {\cal D}$ and associated Ritz vectors
\entab
\end{algorithm0}
}\vspace{0.05in}
\end{minipage}
}
\vspace{0.1in}

{
Unless mentioned otherwise, the default value of the number of poles in the rational 
filter $\rho$ will be equal to $N=16$.}



\subsection{Second algorithm} \label{second_alg}

This section describes an alternative technique to construct the matrix $W$ 
in Algorithm \ref{alg:0} under the assumption that the pencil $(B,M_B)$ 
is diagonalizable. Throughout the rest of this section we will denote 
the eigentriplets of the pencil $(B,M_B)$ by $\left(\delta_i,v^{(i)},
\hat{v}^{(i)}\right),\ i=1,2,\ldots,d$, where $\delta_i$ denotes the 
eigenvalue of $(B,M_B)$ with the $i$th shortest distance from the center 
of the disk ${\cal D}$, and $v^{(i)}$ and $\left(\hat{v}^{(i)}\right)^H$ 
denote the corresponding right and left eigenvectors, respectively.

By combining (\ref{ideal}) and (\ref{eq1}), we can write the top 
$d\times 1$ part of the eigenvector 
$x^{(i)}=\left(\begin{smallmatrix}u^{(i)} \\ y^{(i)}\end{smallmatrix}\right)$ 
associated with the eigenvalue $\lambda_i$ as
\begin{equation}\label{range:eq2}
\begin{aligned}
u^{(i)} & = -B(\lambda_i)^{-1}F(\lambda_i)y^{(i)}.
\end{aligned}
\end{equation} 
While expression (\ref{range:eq2}) is not practical, it serves as a 
starting point for the construction of a subspace which (approximately) 
captures $\texttt{span}\left(u^{(i)}\right)$ without depending on 
the (unknown) quantities $\lambda_i$ and $y^{(i)}$. 

Let $G$ be a matrix such that $y^{(i)} \in \texttt{range}(G)$, e.g., the matrix 
$G$ constructed in Algorithm \ref{alg:0}. In addition, define the matrices
$$V_{\phi} = \left[v^{(1)},v^{(2)},\ldots,v^{(\phi)}\right]\quad \text{and}
\quad \hat{V}_{\phi} = \left[\hat{v}^{(1)},\hat{v}^{(2)},\ldots,\hat{v}^{(\phi)}
\right],$$ where $\phi \in {\cal Z}^*$ is larger than or equal to the number 
of eigenvalues of $(B,M_B)$ located inside the disk ${\cal D}$. Taking advantage 
of the identity $I=V_{\phi}\hat{V}_{\phi}^HM_B + (I-V_{\phi}\hat{V}_{\phi}^HM_B)$,
and noticing that 
$\mathtt{span}\left(V_{\phi}\hat{V}_{\phi}^HM_BB(\lambda_i)^{-1}
F(\lambda_i)y^{(i)}\right)
\subseteq \mathtt{span}\left(v^{(1)},v^{(2)},\ldots,v^{(\phi)}\right)$,
we can write 
{\small \begin{equation}\label{range:eq3a}
\begin{aligned}
\mathtt{span}\left(u^{(i)}\right) 
=& ~ \mathtt{span} \left(B(\lambda_i)^{-1}F(\lambda_i)y^{(i)}\right)\\
=& ~ \mathtt{span} \left(V_{\phi}\hat{V}_{\phi}^HM_BB(\lambda_i)^{-1}F(\lambda_i)y^{(i)}
+(I-V_{\phi}\hat{V}_{\phi}^HM_B)B(\lambda_i)^{-1}F(\lambda_i)y^{(i)}\right)\\
\subseteq & ~\mathtt{span}\left(v^{(1)},v^{(2)},\ldots,v^{(\phi)}\right)+ \mathtt{span}\left((I-V_{\phi}\hat{V}_{\phi}^HM_B)B(\lambda_i)^{-1} F(\zeta)G\right)+\\
& ~\mathtt{span}\left((I-V_{\phi}\hat{V}_{\phi}^HM_B)B(\lambda_i)^{-1} M_FG\right),
\end{aligned}
\end{equation}}
where $\zeta \in {\cal D}$, and we replaced $F(\lambda_i)$ by its 
equivalent form $$F(\lambda_i)=F(\zeta)-(\lambda_i-\zeta)M_F.$$

The expression in (\ref{range:eq3a}) still depends on $\lambda_i$ through the 
term $B(\lambda_i)^{-1}$. Next, we show an equivalent expression of the matrix  $(I-V_{\phi}\hat{V}_{\phi}^HM_B)B(\lambda_i)^{-1}$.

\begin{theorem} \label{lem1}
Let $\zeta_c \in \mathbb{C}$ be the center of disk ${\cal D}$ and $\phi \in 
\mathbb{N}$ larger than or equal to the number of eigenvalues of $(B,M_B)$ 
located inside ${\cal D}$. If we define the matrix
\begin{equation*}
    \widetilde{B}(\zeta) := \left(I-V_\phi \hat{V}_\phi^HM_B\right)B(\zeta)^{-1},
\end{equation*}
then  
\begin{equation}\label{range:eq4}
\left(I-V_\phi \hat{V}_\phi^HM_B\right)B(\lambda_i)^{-1} =
\widetilde{B}(\zeta_c)
\sum_{k=0}^{\infty} \left[(\lambda_i-\zeta_c)M_B\widetilde{B}(\zeta_c)\right]^k,
\end{equation}
for any $\lambda_i \in {\cal D}$.
\end{theorem}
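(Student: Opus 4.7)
The plan is to pass to the spectral decomposition of the pencil $(B,M_B)$ and match both sides of \eqref{range:eq4} term by term. I will normalize the eigentriplets $(\delta_j, v^{(j)}, \hat{v}^{(j)})$ biorthogonally so that $(\hat{v}^{(j)})^H M_B v^{(k)} = \delta_{jk}$. Diagonalizability then yields the standard resolvent expansion $B(\zeta)^{-1} = \sum_{j=1}^d v^{(j)}(\hat{v}^{(j)})^H/(\delta_j - \zeta)$ for every $\zeta \notin \Lambda(B,M_B)$, which can be verified directly from $B(\zeta) v^{(j)} = (\delta_j - \zeta) M_B v^{(j)}$ together with the identity resolution $M_B^{-1} = \sum_j v^{(j)}(\hat{v}^{(j)})^H$. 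Biorthogonality also shows that $P := V_\phi \hat V_\phi^H M_B$ acts as $Pv^{(j)} = v^{(j)}$ for $j \le \phi$ and $Pv^{(j)} = 0$ for $j > \phi$, so both $(I-P)B(\lambda_i)^{-1}$ and $\widetilde B(\zeta_c)$ immediately reduce to tail sums of the form $\sum_{j>\phi} v^{(j)}(\hat{v}^{(j)})^H/(\delta_j - \zeta)$, with $\zeta = \lambda_i$ or $\zeta = \zeta_c$, respectively.

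The second step is a short induction on $m \ge 0$ establishing
\[
\widetilde B(\zeta_c)\bigl[M_B \widetilde B(\zeta_c)\bigr]^m \;=\; \sum_{j>\phi} \frac{v^{(j)}(\hat{v}^{(j)})^H}{(\delta_j - \zeta_c)^{m+1}}.
\]
The base case $m=0$ is the expansion of $\widetilde B(\zeta_c)$ just derived. The inductive step right-multiplies the $m{-}1$ expression by $M_B \widetilde B(\zeta_c)$, and the resulting double sum collapses to a single sum by the biorthogonality relation $(\hat{v}^{(j)})^H M_B v^{(k)} = \delta_{jk}$, which raises the exponent by one. Plugging this into the right-hand side of \eqref{range:eq4} and interchanging the two summations produces
\[
\sum_{j>\phi} \frac{v^{(j)}(\hat{v}^{(j)})^H}{\delta_j - \zeta_c} \sum_{m=0}^{\infty} \left(\frac{\lambda_i - \zeta_c}{\delta_j - \zeta_c}\right)^{m},
\]
whose inner geometric series sums to $(\delta_j - \zeta_c)/(\delta_j - \lambda_i)$. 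This is exactly the spectral expansion of $(I-P)B(\lambda_i)^{-1}$ derived in the first step.

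The hard part will be justifying absolute convergence, which is what licenses the interchange of the two summations. The hypothesis on $\phi$ ensures that every eigenvalue of $(B,M_B)$ located inside $\cal D$ has already been absorbed into $V_\phi$, so for $j > \phi$ one has $|\delta_j - \zeta_c| \ge r$, where $r$ denotes the radius of $\cal D$. For $\lambda_i$ in the (open) interior of $\cal D$ the strict inequality $|\lambda_i - \zeta_c| < r \le |\delta_j - \zeta_c|$ makes each inner geometric series absolutely convergent; boundary ties between $\lambda_i$ and some $\delta_j$ on $\Gamma$ can be absorbed by slightly enlarging $\phi$. An equivalent operator-theoretic route is to note that $P$ commutes with $K := B(\zeta_c)^{-1} M_B$, so $(I-P)K$ restricts to the complementary invariant subspace with spectral radius at most $1/r$; this gives Neumann convergence of $\sum_m (\lambda_i - \zeta_c)^m[(I-P)K]^m$ in operator norm and delivers the identity intrinsically, without invoking the individual eigentriplets.
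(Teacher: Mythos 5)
Your proof is correct and follows essentially the same argument as the paper: both pass to the biorthogonal spectral decomposition of $(B,M_B)$, show that $\widetilde B(\zeta_c)[M_B\widetilde B(\zeta_c)]^k$ diagonalizes with entries $(\delta_j-\zeta_c)^{-(k+1)}$ for $j>\phi$, and sum the geometric series using $|\lambda_i-\zeta_c|<|\delta_j-\zeta_c|$. The only difference is cosmetic — you write the decomposition as a sum over eigentriplets while the paper uses matrix notation $V(\cdot)\hat V^H$ — though you do spell out the convergence justification a bit more explicitly than the paper does.
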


\begin{proof}
Define the matrices  
$$V = \left[V_{\phi},v^{(\phi+1)},\ldots,v^{(d)}\right]\quad \text{and}\quad \hat{V} = \left[\hat{V}_{\phi},\hat{v}^{(\phi+1)},\ldots,\hat{v}^{(d)}\right].$$
Recall that $\hat{V}^HM_BV=I$, and thus $M_B =\hat{V}^{-H}V^{-1}$
and $B =\hat{V}^{-H}\left( \begin{smallmatrix}\delta_1 &  & \\ 
 & \ddots & \\ & & \delta_d\end{smallmatrix}\right)V^{-1}$. 
Using the above identities we can write 
\begin{equation*}
\begin{aligned}
\widetilde{B}(\zeta) &=\left(I-V_\phi \hat{V}_\phi^HM_B\right) V
\left(\begin{smallmatrix} \delta_1-\zeta & &\\ & \ddots &\\ & & \delta_d-\zeta \end{smallmatrix}\right)^{-1}\hat{V}^H\\
 &= 
  V
  \left( 
  \begin{smallmatrix}
  \scalebox{2}{$0$}_{\phi,\phi}  & & & & \\ 
  &   & \dfrac{1}{\delta_{\phi+1}-\zeta} & & \\ 
  &   & & \ddots & \\ 
  &  & & & \dfrac{1}{\delta_{d}-\zeta}\\
 \end{smallmatrix}
 \right)\hat{V}^{H}.
 \end{aligned}
 \end{equation*} 
Let us now define the scalar $\gamma_j =\dfrac{\lambda_i-\zeta_c}{\delta_j-\zeta_c}$. We can write 
\begin{equation*}
    \widetilde{B}(\zeta_c)\left[(\lambda_i-\zeta_c)M_B\widetilde{B}(\zeta_c)\right]^k = V
    \begin{pmatrix}
    \scalebox{2}{$0$}_{\phi,\phi} & & & \\[0.3em]
    & \dfrac{\gamma_{\phi+1}^k}{\delta_{\phi+1}-\zeta_c} & & \\[0.3em]
    & & \ddots & \\[0.3em]
    & & & \dfrac{\gamma_{d}^k}{\delta_d-\zeta_c} \\[0.3em]
    \end{pmatrix}
    \hat{V}^{H}.
\end{equation*}
Accounting for all powers $k=0,1,2,\ldots$, gives
{\small \begin{equation*}
    \widetilde{B}(\zeta_c)\sum_{k=0}^{\infty} 
    \left[(\lambda_i-\zeta_c)M_B\widetilde{B}(\zeta_c)\right]^k = V
    \begin{pmatrix}
    \scalebox{2}{$0$}_{\phi,\phi} & & & \\[0.3em]
    & \dfrac{\sum_{k=0}^{\infty} \gamma_{\phi+1}^k}{\delta_{\phi+1}-\zeta_c} & & \\[0.3em]
    & & \ddots & \\[0.3em]
    & & & \dfrac{\sum_{k=0}^{\infty} \gamma_{d}^k}{\delta_{d}-\zeta_c} \\[0.3em]
    \end{pmatrix}
    \hat{V}^{H}.
\end{equation*}}
Since $\zeta_c$ is the center of ${\cal D}$, it follows that 
$|\gamma_j| < 1$ for any $\delta_j \notin {\cal D}$. 
Therefore, the geometric series converges and $\sum_{k=0}^{\infty} \gamma_j^k=\dfrac{1}{1-\gamma_j}=\dfrac{\delta_j-\zeta_c}{\delta_j-\lambda_i}$. 
It follows that $\dfrac{1}{\delta_j-\zeta_c}\sum_{k=0}^{\infty} \gamma_j^k
=\dfrac{1}{\delta_j-\lambda_i}$.

We finally have
\begin{equation*}
\begin{aligned}
    \widetilde{B}(\zeta_c)\sum_{k=0}^{\infty} 
    \left[(\lambda_i-\zeta_c)M_B\widetilde{B}(\zeta_c)\right]^k & = V
    \begin{pmatrix}
    \scalebox{2}{$0$}_{\phi,\phi} & & & & \\[0.3em]
    & \dfrac{1}{\delta_{\phi+1}-\lambda_i} & & \\[0.3em]
    & & \ddots & \\[0.3em]
    & & & \dfrac{1}{\delta_{d}-\lambda_i} \\[0.3em]
    \end{pmatrix}
    \hat{V}^{H} \\
    & = \left(I-V_\phi \hat{V}_\phi^HM_B\right)B(\lambda_i)^{-1}.
    \end{aligned}
\end{equation*}
This concludes the proof.
\end{proof}

Theorem \ref{lem1} implies that we can approximate 
$\left(I-V_\phi \hat{V}_\phi^HM_B\right)B(\lambda_i)^{-1}$ 
through a finite truncation of the right-hand side 
in (\ref{range:eq4}). The approximation error of this truncation is
considered in the following proposition.
\begin{proposition}\label{prop2}
Let $\psi$ be a positive integer and define the error matrix 
\begin{equation*}
R_{\psi}(\lambda_i)=
(I-V_{\phi}\hat{V}_{\phi}^HM_B)B(\lambda_i)^{-1}-\widetilde{B}(\zeta_c)
\sum_{k=0}^\psi \left[(\lambda_i-\zeta_c)M_B\widetilde{B}(\zeta_c)\right]^k.
\end{equation*}
Then
\begin{equation}
R_{\psi}(\lambda_i)
=
\sum_{k=\psi+1}^{\infty} \sum_{j=\phi+1}^{d}\left[\dfrac{(\lambda_i-\zeta_c)^{k}}
{(\delta_j-\zeta_c)^{k+1}}\right]v^{(j)} \left(\hat{v}^{(j)}\right)^HM_B.
\end{equation}
\end{proposition}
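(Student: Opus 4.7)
The proof is essentially a direct consequence of the diagonalization already computed inside the proof of Theorem \ref{lem1}. The plan is to reuse the per-term spectral representation derived there and express $R_\psi(\lambda_i)$ as the difference between the full geometric series and its truncation at index $\psi$, after which the answer drops out by inspection.

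First, I would isolate the per-$k$ identity implicit in the proof of Theorem \ref{lem1}: diagonalizing exactly as in that argument gives, for each nonnegative integer $k$,
\begin{equation*}
\widetilde{B}(\zeta_c)\bigl[(\lambda_i-\zeta_c)M_B\widetilde{B}(\zeta_c)\bigr]^k \;=\; V\,\mathrm{diag}\!\left(0_{\phi,\phi},\,\frac{\gamma_{\phi+1}^k}{\delta_{\phi+1}-\zeta_c},\,\ldots,\,\frac{\gamma_d^k}{\delta_d-\zeta_c}\right)\hat V^{H},
\end{equation*}
where $\gamma_j=(\lambda_i-\zeta_c)/(\delta_j-\zeta_c)$. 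Since $\gamma_j^k/(\delta_j-\zeta_c) = (\lambda_i-\zeta_c)^k/(\delta_j-\zeta_c)^{k+1}$, this step already exhibits the $k$-th summand as a finite sum of rank-one outer products indexed by $j=\phi+1,\ldots,d$.

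Next, I would use Theorem \ref{lem1} to replace $(I-V_\phi\hat V_\phi^{H}M_B)B(\lambda_i)^{-1}$ in the definition of $R_\psi(\lambda_i)$ by its infinite-series representation. Cancelling the first $\psi+1$ partial sums produces
\begin{equation*}
R_\psi(\lambda_i) \;=\; \widetilde{B}(\zeta_c)\sum_{k=\psi+1}^{\infty}\bigl[(\lambda_i-\zeta_c)M_B\widetilde{B}(\zeta_c)\bigr]^k,
\end{equation*}
and substituting the per-$k$ spectral form above and reading off the $V(\cdot)\hat V^{H}$ action as a sum of outer products over $j=\phi+1,\ldots,d$ yields exactly the stated formula.

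There is essentially no hard step here: the whole argument is algebraic once the diagonalization in Theorem \ref{lem1} is in hand. The only point requiring a brief justification is swapping the outer sum in $k$ with the inner sum in $j$ and discarding the tail of the truncated series. This is legitimate because Theorem \ref{lem1} already established $|\gamma_j|<1$ for every $j>\phi$ (as $\zeta_c$ lies at the center of $\mathcal D$ while $\delta_j\notin\mathcal D$), so each scalar tail $\sum_{k=\psi+1}^{\infty}\gamma_j^k/(\delta_j-\zeta_c)$ converges absolutely and the finite-dimensional matrix series is termwise absolutely convergent.
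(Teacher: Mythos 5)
Your proposal is correct and follows essentially the same route as the paper: both reuse the spectral decomposition $\widetilde{B}(\zeta_c)\left[(\lambda_i-\zeta_c)M_B\widetilde{B}(\zeta_c)\right]^k = V\,\mathrm{diag}\!\left(0_{\phi,\phi},\,\gamma_{\phi+1}^k/(\delta_{\phi+1}-\zeta_c),\ldots,\gamma_d^k/(\delta_d-\zeta_c)\right)\hat V^H$ established inside the proof of Theorem~\ref{lem1}, identify $R_\psi(\lambda_i)$ with the tail $\sum_{k\geq\psi+1}$ of the geometric series, and read off the result by substituting $\gamma_j=(\lambda_i-\zeta_c)/(\delta_j-\zeta_c)$. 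The paper phrases the difference directly as a single matrix $V\,\mathrm{diag}(\cdots)\hat V^H$ whose diagonal entries are the scalar tails, whereas you first invoke the series identity from Theorem~\ref{lem1} and then diagonalize term by term; these are the same computation in a slightly different order, and your remark on absolute convergence of the scalar tails (from $|\gamma_j|<1$) is the same observation the paper implicitly relies on.
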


\begin{proof}
Recall the scalar $\gamma_j =\dfrac{\lambda_i-\zeta_c}{\delta_j-\zeta_c}$.
The matrix $R_{\psi}(\lambda_i)$ is then equal to
\begin{equation*}
    R_{\psi}(\lambda_i) = V
    \begin{pmatrix}
    \scalebox{2}{$0$}_{\phi,\phi} & & & \\[0.3em]
    & \dfrac{\sum_{k=\psi+1}^{\infty} \gamma_{\phi+1}^k}{\delta_{\phi+1}-\zeta_c} & & \\[0.3em]
    & & \ddots & \\[0.3em]
    & & & \dfrac{\sum_{\psi+1}^{\infty} \gamma_{d}^k}{\delta_{d}-\zeta_c} \\[0.3em]
    \end{pmatrix}
    \hat{V}^{H}.
\end{equation*}
The proof concludes by replacing $\gamma_j$ with its ratio.
\end{proof}
Proposition \ref{prop2} indicates that when $\zeta_c$ is close to the sought 
eigenvalues $\lambda_1,\ldots,\lambda_{nev}$ and the eigenvalues 
$\delta_{\phi+1},\ldots,\delta_{d}$ are located far away from the disk $\cal{D}$, 
then the matrix $\widetilde{B}(\zeta_c)\sum_{k=0}^\psi
\left[(\lambda_i-\zeta_c)M_B\widetilde{B}(\zeta_c)\right]^k$ can be used as 
an accurate approximation of the matrix 
$(I-V_{\phi}\hat{V}_{\phi}^HM_B)B(\lambda_i)^{-1}$ 
even for small values of $\psi$.

Based on the above discussion, we expect to find a reasonable approximation 
of the subspace $\texttt{span}\left(u^{(1)},\ldots, u^{(n_{ev})}\right)$ in  
the range of the matrix
\begin{equation}\label{coWenh}
W_{\phi,\psi} = \Big[V_\phi,\ \left(I-V_\phi \hat{V}_\phi^HM_B\right)\hat{B}_{\psi}(\zeta_c)\hat{G}_F,\ \underbrace{\left(I-V_\phi \hat{V}_\phi^HM_B\right)\hat{B}_{\psi}(\zeta_c)\hat{G}_{M_F}}_{\text{{\rm only if $M_F\neq 0$}}}\Big],
\end{equation}
where 
\begin{equation*}
\hat{B}_{\psi}(\zeta_c) = \left[\tilde{B}(\zeta_c),
\tilde{B}(\zeta_c)\left[M_B\tilde{B}(\zeta_c)\right],\ldots,
\tilde{B}(\zeta_c)\left[M_B\tilde{B}(\zeta_c)\right]^\psi\right],
\end{equation*}
and we set
\begin{equation*}
{\small \hat{G}_F=
\begin{bmatrix}
F(\zeta_c)G & & & \\[0.3em]
 & F(\zeta_c)G & & \\[0.3em]
 &  & \ddots & \\[0.3em]
 &  &  &  & F(\zeta_c)G \\[0.3em]
\end{bmatrix},\ 
\hat{G}_{M_F}=
\begin{bmatrix}
M_F G & & & \\[0.3em]
 & M_F G & & \\[0.3em]
 &  & \ddots & \\[0.3em]
 &  &  &  & M_F G \\[0.3em]
\end{bmatrix}}.
\end{equation*}

\vspace{0.1in}
\noindent\fbox{%
\begin{minipage}{\dimexpr\linewidth-5\fboxsep-5\fboxrule\relax}
\vbox{
\begin{algorithm0}{~} \label{alg:1}
\betab
\>0a.\> Inputs: $N,\ {\cal D}$, $\psi\ \mathrm{(optionally)}, \phi$ (optionally) \\
\>0b.\> Compute the complex pairs $\{\omega_j,\zeta_j\}_{j=1,2,\ldots,N}$, set $G:=0$    \\
\>0c.\> (Optionally) Reorder $(A,M)$ as in Section \ref{ddtec}    \\
\> 1.\> Compute an orthonormal basis $G$ of $\texttt{range} \left(\sum_{j=1}^{N} \omega_jS(\zeta_j)^{-1}\right)$\\ \> \>by Algorithm \ref{alg:2} \\
\>2.\> Compute the eigenpairs associated with the $\phi$ eigenvalues of smallest\\ 
\> \> modulus of the pencil $(B(\zeta),M_B)$ and form the matrix $V_\phi$\\
\>3.\> Set the matrix $W_{\phi,\psi}$ as in (\ref{coWenh})\\
\>4.\> Set $Z = \begin{bsmallmatrix} W_{\phi,\psi} & \\ & G \end{bsmallmatrix}$, 
solve the eigenvalue problem in (\ref{hrr}) and return \\\>\> all
Ritz values $\theta \in {\cal D}$ and associated Ritz vectors 
\entab
\end{algorithm0}
}\vspace{0.05in}
\end{minipage}
}
\vspace{0.1in}

{The complete algorithmic procedure is summarized 
in Algorithm \ref{alg:1}. The accuracy in the approximation of the 
eigenpairs $(\lambda_i,x^{(i)}),\ i=1,\ldots,n_{ev}$, depends on the 
distance of the eigenvalues $\lambda_i \in {\cal D}$ from both the center 
of the disk ${\cal D}$ and the (non-deflated) eigenvalues of the matrix 
pencil $(B,M_B)$. In contrast, the accuracy provided by 
Algorithm \ref{alg:0} is irrespective to the location of the eigenvalues 
$\lambda_i \in {\cal D}$. Thus, the latter should be the algorithm of 
choice when one seeks higher accuracy in the approximation of the $n_{ev}$ sought 
eigenpairs of the pencil $(A,M)$. On the other hand, Algorithm \ref{alg:1} 
should be preferred when a few digits of accuracy are deemed enough, and 
lower wall-clock execution time is critical.}

{
Compared to Algorithm \ref{alg:0}, Algorithm \ref{alg:1} introduces two 
new parameters, $\psi \in \mathbb{N}$ and $\phi \in \mathbb{N}$. Larger 
values of these two integers lead to higher accuracy but increase the 
associated computational cost. Increasing the value of $\psi$ aims at 
reducing  the error along all eigenvector directions of $(B,M_B)$, while 
increasing the value of $\phi$ aims at eliminating the approximation error 
associated with eigenvectors corresponding closer to the center of the 
disk ${\cal D}$. Generally speaking, the main improvements in accuracy 
come from increasing the value of $\psi$. Our default choice is to  
set $\psi=1$, and $\phi$ equal to the number of eigenvalues of the pencil 
$(B,M_B)$ located inside the disk ${\cal D}$. If additional accuracy is 
needed, one can augment $W_{\phi,\psi}$ with either additional eigenvectors 
of the pencil $(B,M_B)$ (i.e., increase $\phi$), or additional resolvent 
approximation matrix terms (i.e., increase $\psi$) and only repeat the 
Rayleigh-Ritz projection step. This approach can be repeated more than 
once, i.e., until the residual norms of all $n_{ev}$ approximate eigenpairs 
are less a chosen threshold.}

\section{Practical details} \label{sec4a}


\subsection{Computational cost comparison} \label{prade}

The main computational bottleneck of the rational filtering algorithms discussed 
in this paper is the solution of complex-shifted sparse linear systems of the form 
$B(\zeta)x_d=b_d$ and $S(\zeta)x_s=b_s$. Therefore, an algorithm that requires 
fewer such linear system solutions will typically be faster as well.

\begin{table}
\caption{Total number of linear system solutions of the form $B(\zeta)x_d=b_d$ 
and $S(\zeta)x_s=b_s$ computed by Algorithm \ref{alg:00}, Algorithm \ref{alg:0}, 
Algorithm \ref{alg:1}, and the algorithm used in the FEAST software package. 
The variables $\eta_1\in\mathbb{N},\ \eta_2\in\mathbb{N},\ \eta_3\in\mathbb{N}$, 
denote the number of iterations performed by Algorithm \ref{alg:2} when called 
from Algorithm \ref{alg:00}, Algorithm \ref{alg:0}, and Algorithm \ref{alg:1}, respectively. The variable $\tau_{\phi}$ denotes the number of linear systems 
of the form $B(\zeta_c)x_d=b_d$ required to compute the $\phi$ sought 
eigenvectors of the pencil $(B-\zeta_c M_B,M_B)$ by Implicitly Restarted 
Arnoldi (IRA) combined with shift-and-invert \cite{lehoucq1998arpack}. 
The variable $\eta_4\in\mathbb{N}$ denotes the number of iterations 
performed by subspace iteration. } \label{tab:table22}
\centering
{\small \begin{tabular}{lccccl}
\toprule
& \multicolumn{2}{c}{} & \multicolumn{2}{c}{Alg. \ref{alg:1}} & \multicolumn{1}{c}{}
\\\cmidrule(lr){4-5}
 & Alg. \ref{alg:00} & Alg. \ref{alg:0} & $M_F=0$ & $M_F \neq 0$ & Sub. It.\\\midrule
$B(\zeta)x_d=b_d$    & $2N\eta_1$ & $N\eta_2$ & $\eta_3(\psi+1)+\tau_{\phi}$ & $2\eta_3(\psi+1)+\tau_{\phi}$ & $2mN\eta_4$  \\
$S(\zeta)x_s=b_s$    & $N\eta_1$ & $N\eta_2$ & $N\eta_3$ & $N\eta_3$ & $mN\eta_4$  \\
\bottomrule
\end{tabular}}
\end{table}

Table \ref{tab:table22} summarizes the computational costs of Algorithm 
\ref{alg:00}, Algorithm \ref{alg:0}, and Algorithm  \ref{alg:1}, where 
we assume that all linear systems are solved by a direct solver and their 
complexity is oblivious to the actual value of $\zeta \notin \Lambda(B,M_B)$. 
The variables $\eta_1\in\mathbb{N},\ \eta_2\in\mathbb{N},\ \eta_3\in\mathbb{N}$, 
denote the number of iterations performed by Algorithm \ref{alg:2} when called 
from Algorithm \ref{alg:00}, Algorithm \ref{alg:0}, and Algorithm \ref{alg:1}, respectively. It is straightforward to observe that 
when $\eta_1\approx \eta_2\approx \eta_3$, Algorithm \ref{alg:0} requires 
about half linear system solutions of the form $B(\zeta)x_d=b_d$ than what 
Algorithm \ref{alg:00} does. Moreover, Algorithm \ref{alg:1} requires a 
number of linear system solutions which is independent of the number 
of poles $N$. Thus, larger values of $N$ should increase the computational 
complexity gap in favor of Algorithm \ref{alg:1}. For comparison purposes we 
also list the computational complexity of subspace iteration applied to matrix $\rho(M^{-1}A)$ with an initial subspace of dimension $m\geq n_{ev}$.
In contrast to the algorithms proposed in this paper, the convergence of 
subspace iteration depends on the dimension $m$ of its initial subspace.

\subsection{Matrix partitionings} \label {ddtec}

The matrix partitioning algorithms discussed in this paper can take 
advantage of a reordering of the pencil $(A,M)$ so that the pencil 
$(B,M_B)$ is block-diagonal. For Algorithm \ref{alg:1} this implies 
that the computation of the matrix $V_\phi$ then decouples into $p$ 
independent generalized non-Hermitian eigenvalue problems. 
The eigenvalues of each one of these $p$ matrix pencils can be then 
computed in parallel.

To obtain the above reordering we partition the adjacency graph of the 
matrix 
$\left|A\right| + \left|A^T\right|+\left|M\right| + \left|M^T\right|$ 
into $p \geq 2$ non-overlapping partitions \cite{saad1996ilum}. We then 
reorder the equations/unknowns so that the interior variables across all 
partitions are ordered before the interface ones. The latter procedure 
is equivalent to transforming the original pencil $(A,M)$ into the form $\left(PAP^T,PMP^T\right)$, where the $n\times n$ matrix $P$ holds the 
row permutation of $(A,M)$. The eigenpairs of $(A,M)$ are connected with 
those of the matrix pencil $\left(PAP^T,PMP^T\right)$ through the formula 
\begin{equation*}PAP^T\left(Px^{(i)}\right)=\lambda_iPMP^T\left(Px^{(i)}
\right).\end{equation*}


The matrices $PAP^T$ and $PMP^T$ can be written us
{\small \begin{equation*} \label{dd1}
 PAP^T = 
 \xpmatrix{
 B_1      &        &           &          &  F_{1 }  \cr
          & B_2    &           &          &  F_{2 }  \cr
          &        &   \ddots  &          &  \vdots  \cr
          &        &           &  B_p     &   F_p    \cr
  E_1   & E_2  &   \ldots  &  E_p   &   C      \cr
 },\ \ 
 PMP^T = 
 \xpmatrix{
 M_B^{(1)}   &           &           &             &  M_F^{(1)}  \cr
             & M_B^{(2)} &           &             &  M_F^{(2)}  \cr
             &           &   \ddots  &             &  \vdots     \cr
             &           &           &  M_B^{(p)}  &  M_F^{(p)}  \cr
  M_E^{(1)} & M_E^{(2)} &  \ldots  & M_E^{(p)} &   M_C       \cr
 }
 \end{equation*}}
 where matrices $B_i$ and $M_B^{(i)}$ are square matrices of size 
 $d_i\times d_i$, matrices $F_i$  $\left(E_i\right)$ and $M_F^{(i)}$ $\left(M_E^{(i)}\right)$ are of size $d_i\times s_i$  ($s_i\times 
 d_i$), and the integers $d_i$ and $s_i$ denote the number of interior 
 and interface  nodes located in the $i$th subdomain of the adjacency 
 graph of $\left|A\right| + \left|A^T\right|+\left|M\right| + 
 \left|M^T\right|$, respectively. On the other hand, matrices $C$ and 
 $M_C$ are of size $s\times s$ where $s=\sum\limits_{i=1}^p s_i$.

\section{Experiments} \label{sec4}

The numerical experiments presented in this section were performed in a Matlab 
environment (version R2018b), using 64-bit arithmetic, on a single core of a 
MacBook Pro equipped with a quad-core 2.5 GHz Intel Core i7 processor and 16 GB 
1600 MHz DDR3 of system memory. The matrices used throughout our experiments 
are listed in Table \ref{testmat} and can be retrieved from SuiteSparse Matrix 
Collection \cite{UFSM} and Matrix Market repository \cite{boisvert1997matrix}. 

\begin{table}[tbhp]
\centering
{\footnotesize
\caption{$n$: size of matrices $A$ and $M$, $nnz(.)$: number of nonzero 
entries.  \label{testmat}}
\begin{tabular}{lcccccc}
\hline\hline
\# &Matrix pencil        & $n$   & $nnz(A)/n$ & $nnz(M)/n$ & Application\\
\hline\hline
1.& \texttt{bfw782}     & 782      & 9.6 & 7.6    &  Engineering\\
2.& \texttt{utm1700b}   & 1,700    & 12.7 & 1.0   & Electromagnetics \\
3.& \texttt{wang1}      & 2,903    & 6.6 & 1.0    & Semiconductors \\
4.& \texttt{rdb3200l}   & 3,200    & 5.9 & 1.0    & CFD \\
5.& \texttt{thermal}    & 3,456    & 19.2 & 1.0   & Thermal \\
6.& \texttt{dw4096}     & 8,192    & 5.1 & 1.0    & Engineering \\
7.& \texttt{big}        & 13,209   & 6.9 & 1.0    & Directed weighted graph \\
\hline\hline
\end{tabular}}
\end{table}

Throughout the rest of this section we consider the application of 
three different algorithms: a) Algorithm \ref{alg:0}, b) Algorithm 
\ref{alg:1}, and finally c) subspace iteration with the matrix 
$\rho(M^{-1}A)$, where the initial subspace is of dimension $m\geq 
n_{ev}$. We will refer to this approach as RSI. As a separate note, 
a high-performance implementation of subspace iteration with rational 
filtering can be found in the FEAST software package.

The radius of the disk ${\cal D}$ is set equal to 1.001 times the radius 
of the minimal enclosing circle of eigenvalues $\lambda_1,\ldots,
\lambda_{n_{ev}}$. The rational filter function in (\ref{ratfilter}) is 
constructed through discretizing (\ref{eq:contour}) by the trapezoidal 
rule of order $N$. Throughout the rest of this section we assume that 
the iterative loop in Algorithm \ref{alg:2} terminates when the ratio of 
the smallest to the largest singular value is less than or equal to $1.0 
\times 10^{-12}$, and we set a maximum number of four hundred iterations. 
All matrix pencils were reordered as discussed in Section \ref{ddtec} 
using $p=8$. The residual norm of each approximate eigenpair $(\hat{\lambda},
\hat{x})$ is computed as
$
    \hat{\rho} = \dfrac{\|A\hat{x}-\hat{\lambda}M\hat{x}\|_2}
    {\|A\hat{x}\|_2+|\hat{\lambda}|\|M\hat{x}\|_2}.
$
All algorithms discussed in this section return only those approximate 
eigenpairs $(\hat{\lambda},\hat{x})$ for which $\hat{\lambda} \in 
{\cal D}$. When more than $n_{ev}$ approximate eigenvalues are located 
in ${\cal D}$ we purge the spurious ones by keeping only those for which 
the associated residual norm is smaller than the threshold tolerance 
$1.0 \times 10^{-3}$. This approach was successful in all experiments we 
performed.

\subsection{A detailed example}

\begin{figure} 
\includegraphics[width=0.49\textwidth]{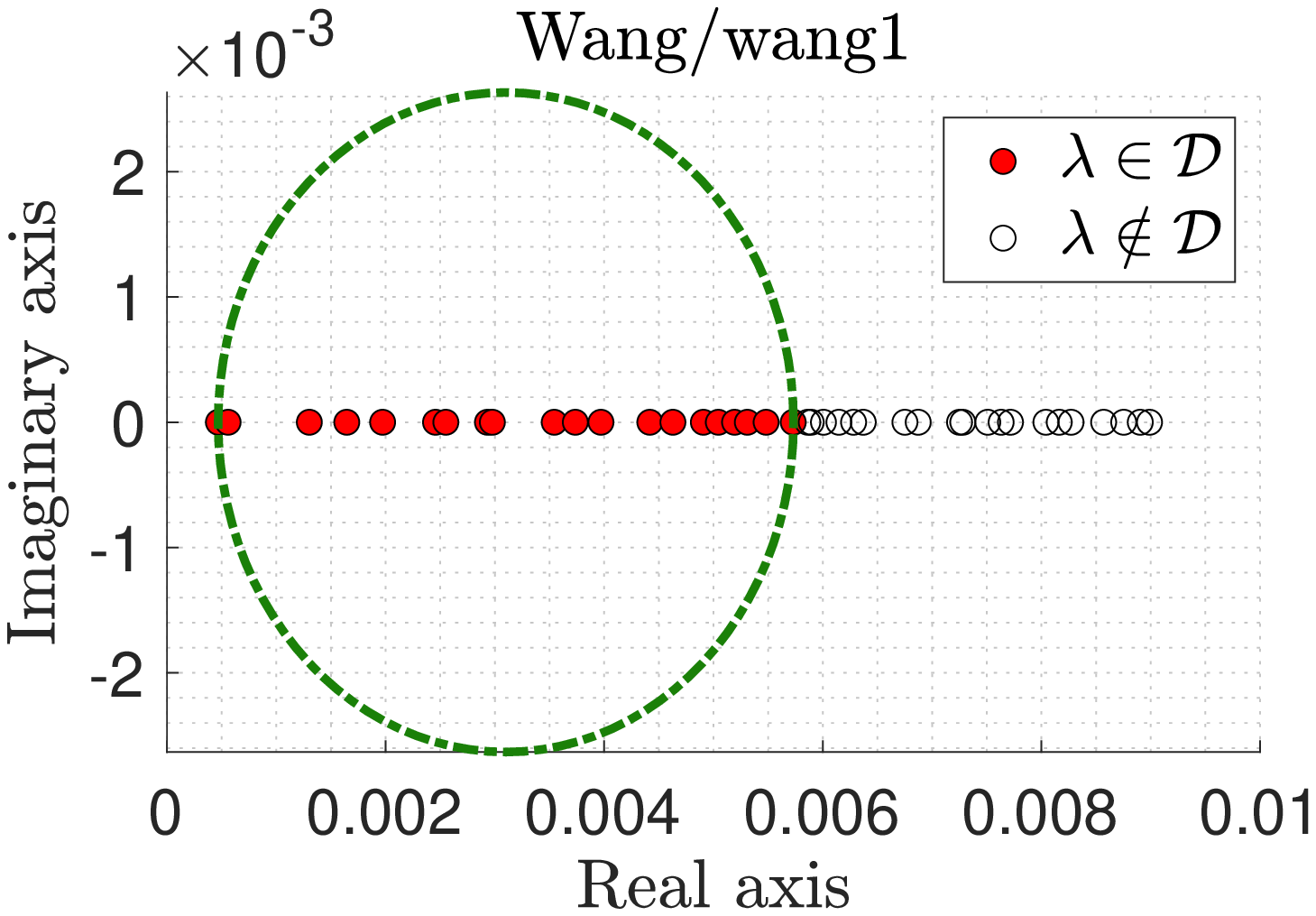}
\includegraphics[width=0.49\textwidth]{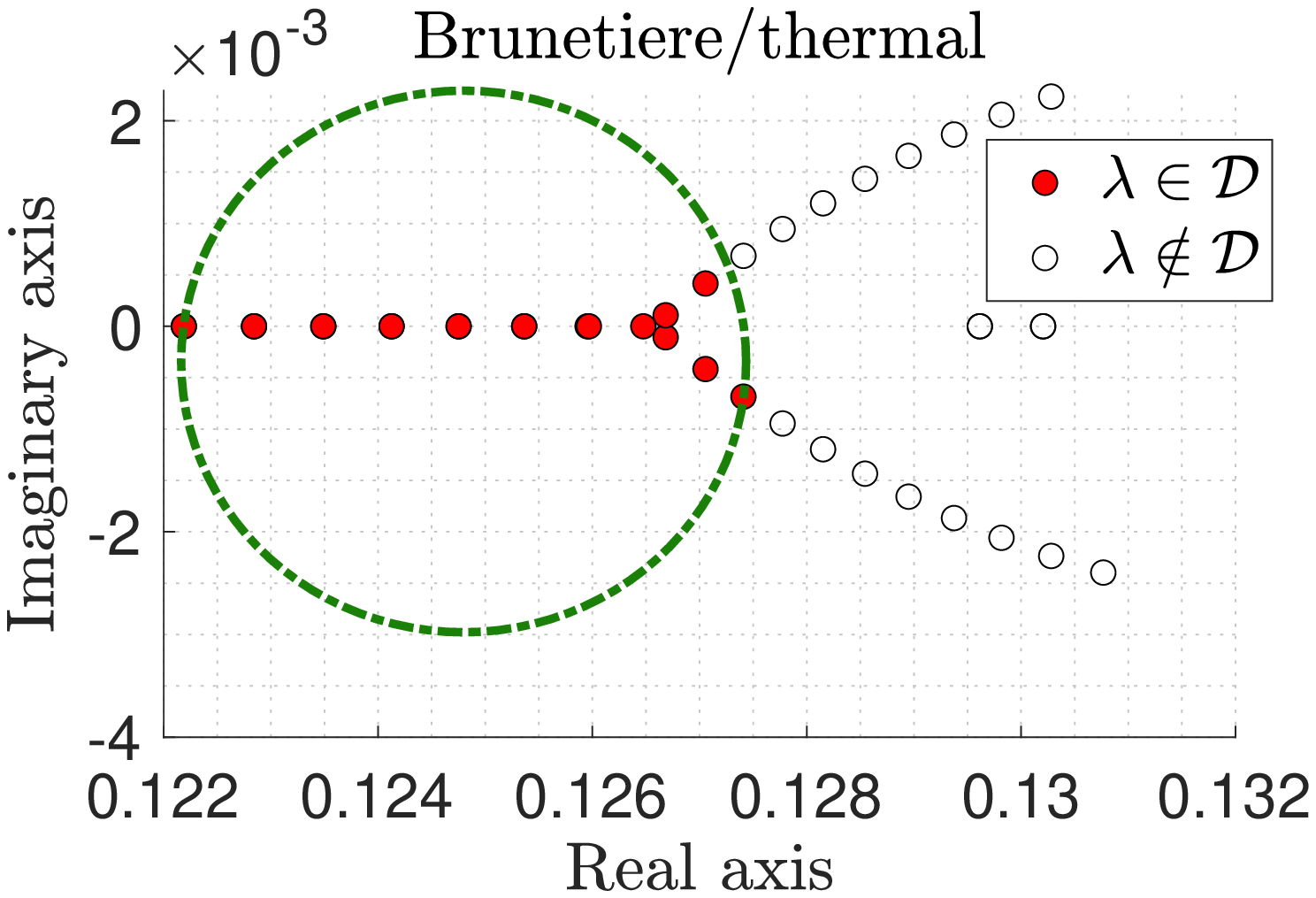}
\includegraphics[width=0.49\textwidth]{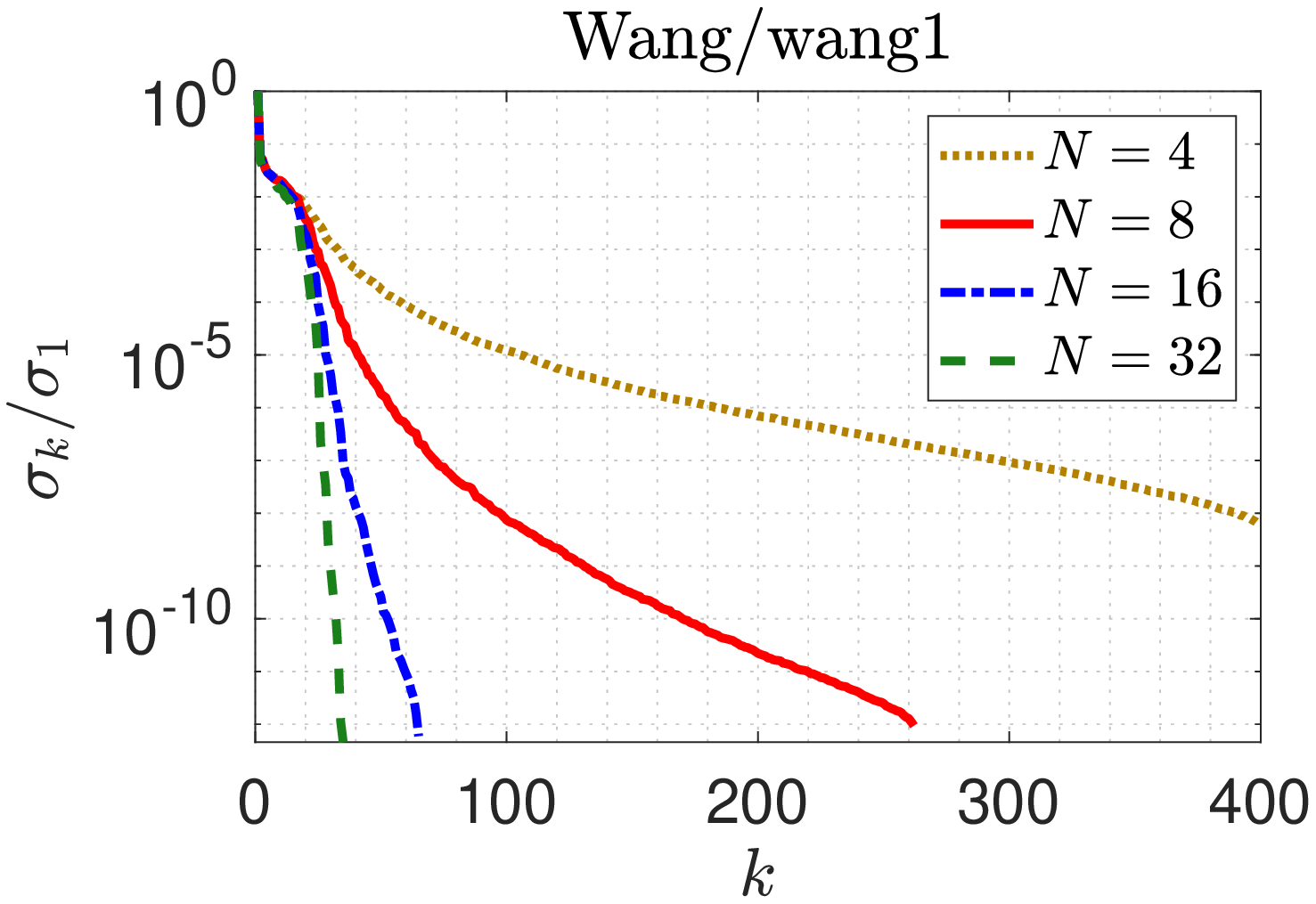}
\includegraphics[width=0.49\textwidth]{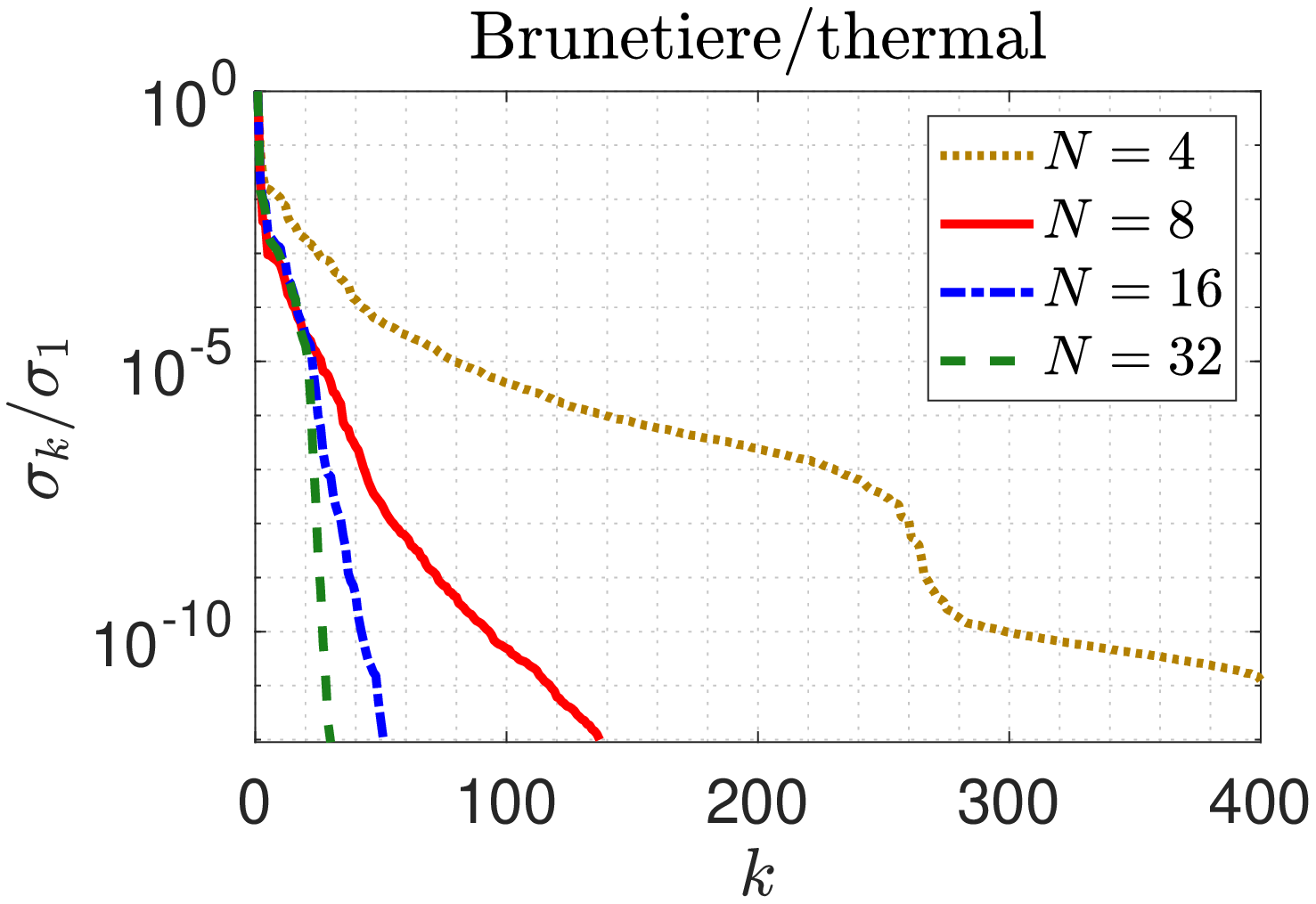}
\includegraphics[width=0.49\textwidth]{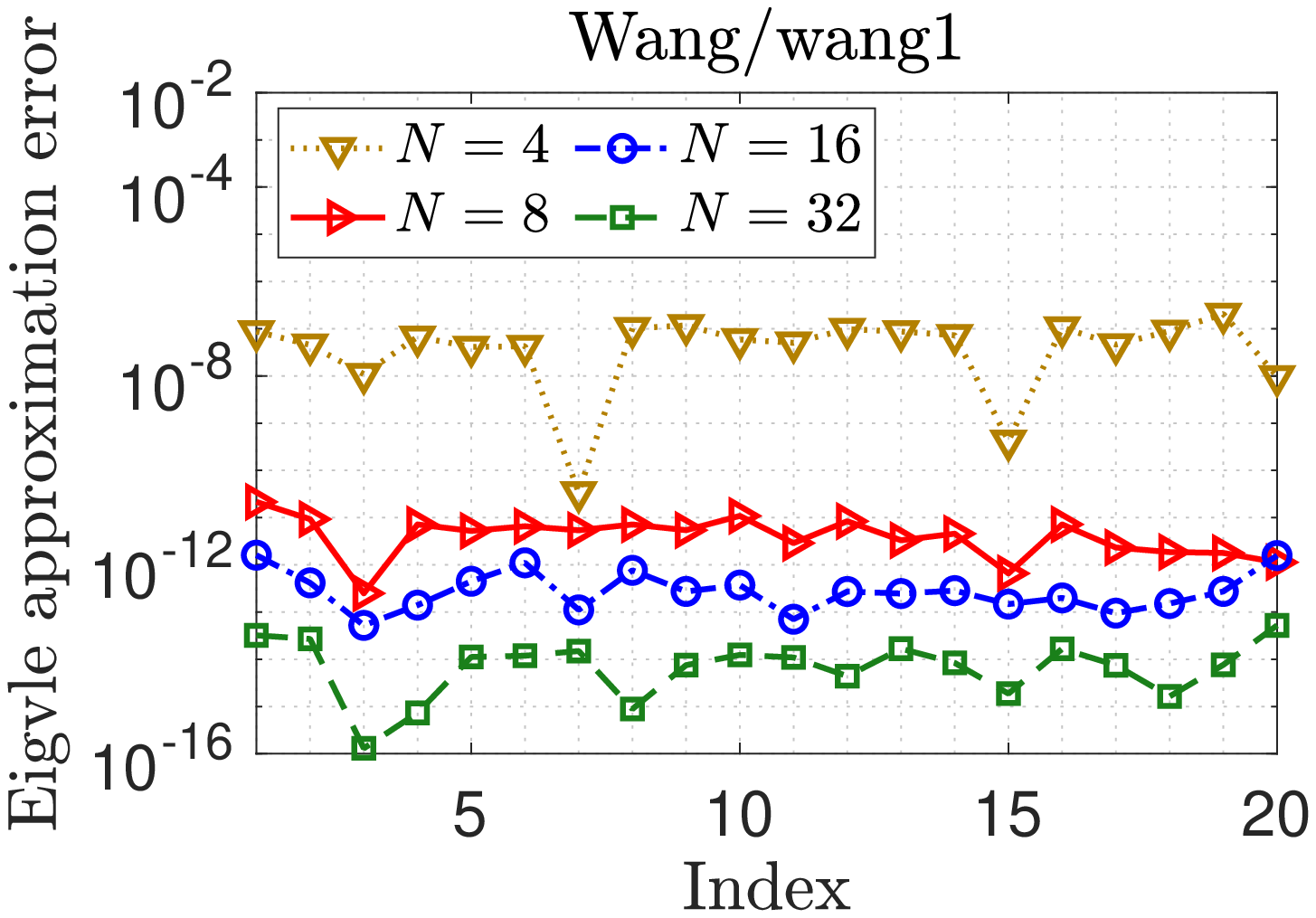}
\includegraphics[width=0.49\textwidth]{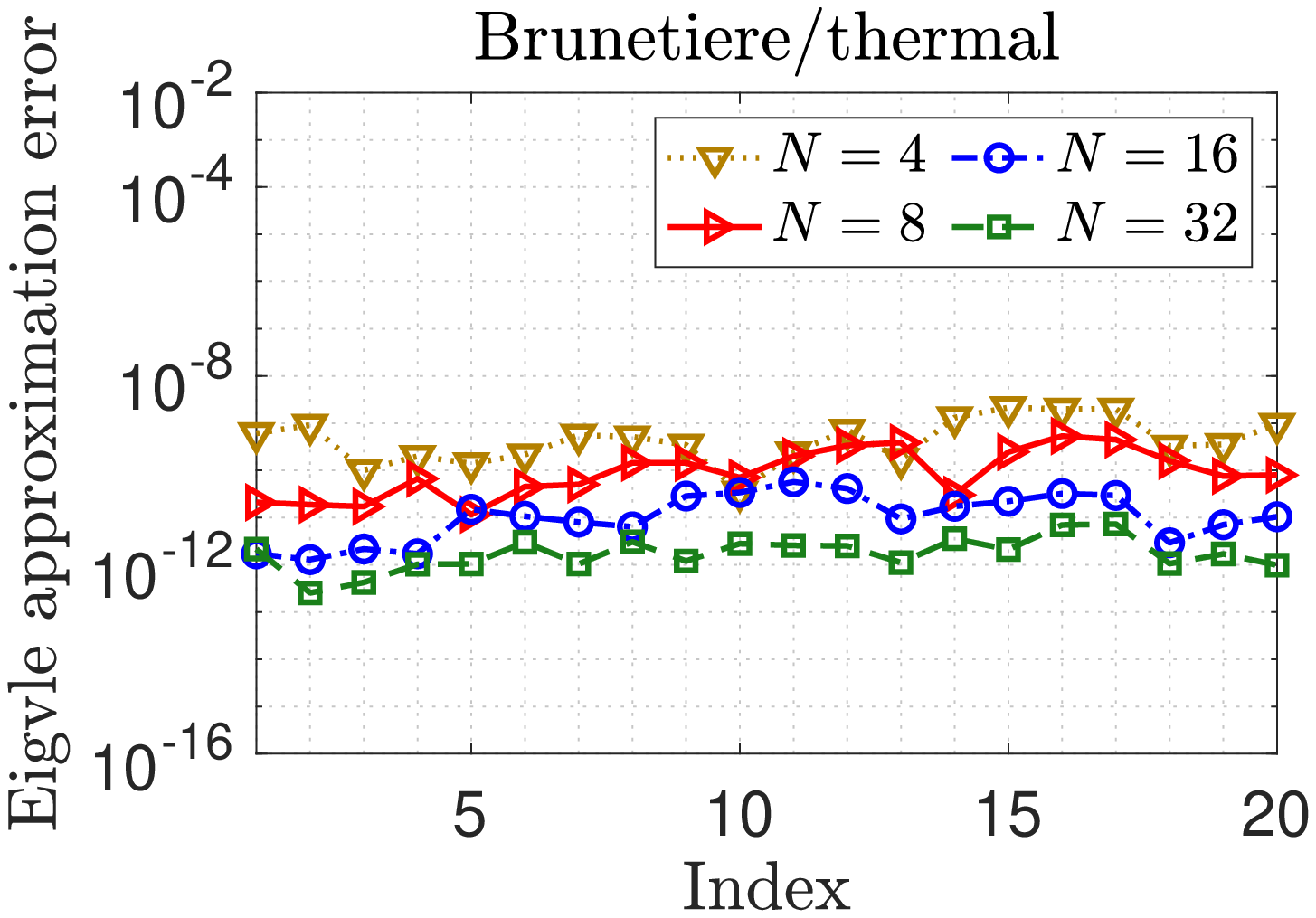}
\includegraphics[width=0.49\textwidth]{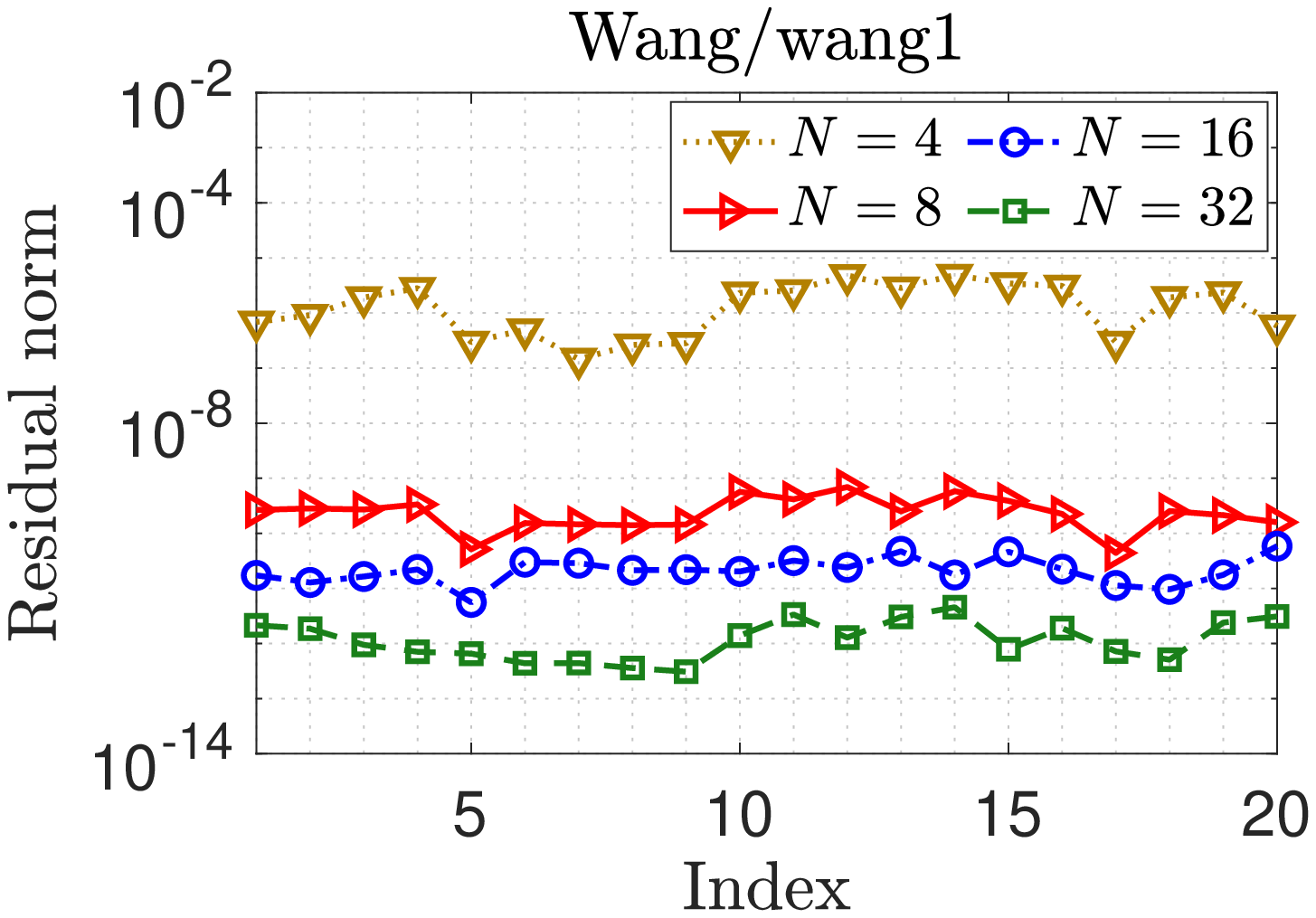}
\includegraphics[width=0.49\textwidth]{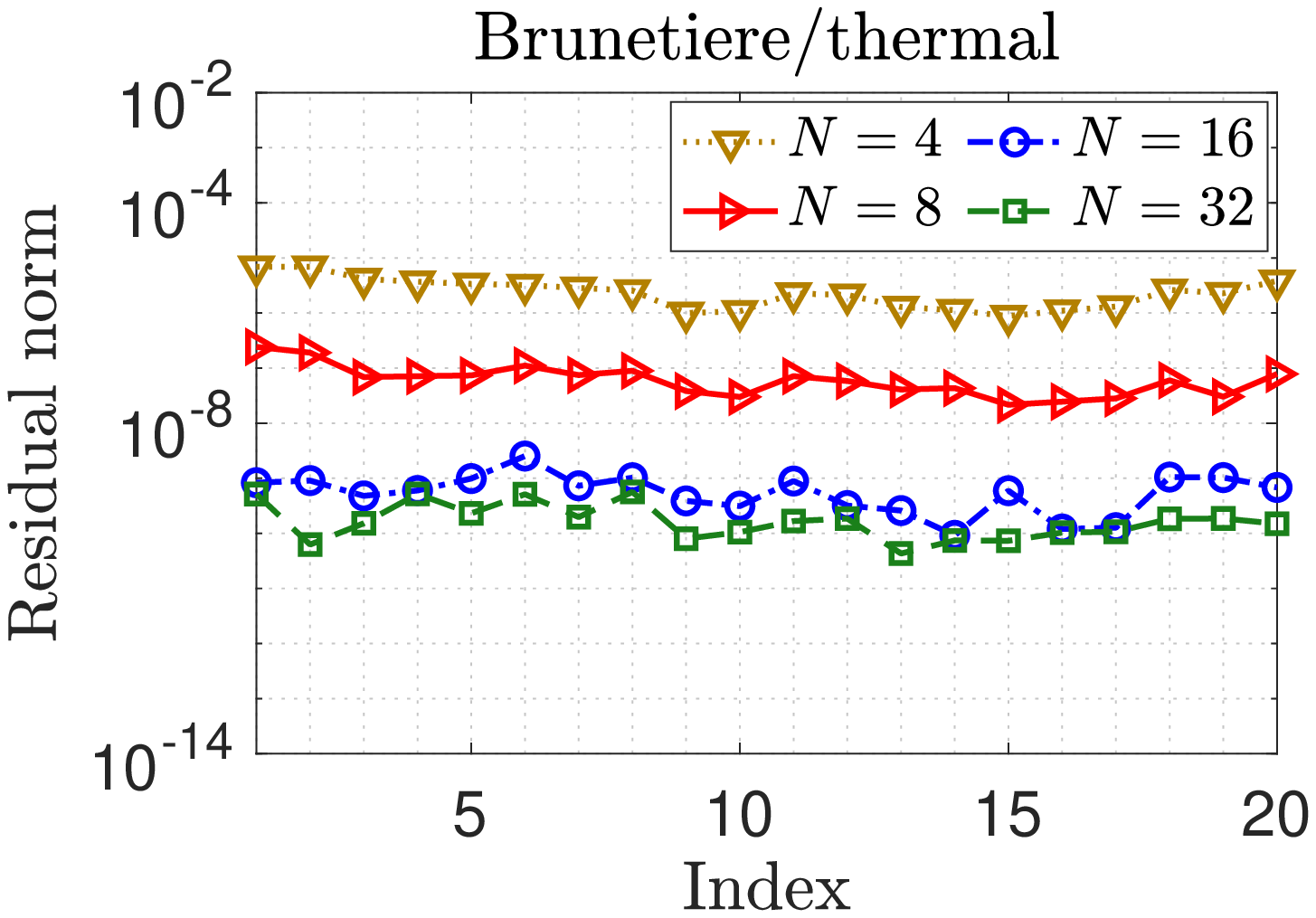}
\caption{Application of Algorithm \ref{alg:0} to matrices 
\texttt{wang1} (left column) and \texttt{thermal} (right column). 
First row: the $n_{ev}$ sought eigenvalues of $(A,M)$ and 
$n_{ev}$ immediate unwanted eigenvalues of smallest modulus. Second 
row: the ratio of smallest to largest singular value of matrix $G$ 
as determined at each iteration of Algorithm \ref{alg:2} during its 
application to matrix $\sum_{j=1}^{N}\omega_j S(\zeta_j)^{-1}$.
Third row: absolute eigenvalue error. Fourth row: residual norm. 
The indices of the $x$-axis are organized such that index ‘$i$' 
corresponds to the sought eigenvalue with the $i$th smallest real part. \label{wang1a}} 
\end{figure}
We consider the computation of the $n_{ev}=20$ eigenvalues of 
smallest modulus (and their associated eigenvectors) of the matrices 
\texttt{wang1} and \texttt{thermal}. The size of the Schur 
complement matrices after the application of the graph partitioner 
is equal to $s=576$, and $s=668$, respectively. 
The application of Algorithm \ref{alg:0} is visualized in Figure \ref{wang1a}. The first row 
of plots shows the $n_{ev}$ sought and $n_{ev}$ immediate unwanted eigenvalues of smallest 
modulus, while the second row of shows the ratio of the smallest to the largest singular value 
as determined at each iteration of Algorithm \ref{alg:2} during its application to matrix 
$\sum_{j=1}^{N}\omega_j S(\zeta_j)^{-1}$.\footnote{Results 
for matrix $\sum_{j=1}^{N} \omega_jB(\zeta_j)^{-1}F(\zeta_j)S(\zeta_j)^{-1}$ were essentially 
identical and thus not reported.} Notice that this ratio approximates zero and decreases faster 
for larger values of $N$ as a consequence of the fact that the rank of the matrix 
$\sum_{j=1}^{N}\omega_j S(\zeta_j)^{-1}$ approaches that 
of the matrix $\sum_{i=1}^{n_{ev}}y^{(i)}\left(\hat{y}^{(i)}\right)^H$, which 
is bounded from above by $n_{ev}$. Increasing the value of $N$ does not always lead to a proportional 
gain in terms of convergence rate. For example, increasing $N=8$ to $N=16$ reduces the number of 
iterations by a factor of at least four for the first two matrices considered. On the other hand, 
increasing $N=16$ to $N=32$ reduces the number of iterations by a factor which is smaller than two. 
Moreover, small values of $N$, e.g., $N=4$, might lead to very slow convergence. 
The third and fourth rows of plots show the associated eigenvalue errors (left column) and residual 
norms (right column) returned by Algorithm \ref{alg:0}. For the choice $N=4$, the range of matrices 
$\sum_{j=1}^{N} \omega_jB(\zeta_j)^{-1}F(\zeta_j)S(\zeta_j)^{-1}$ and 
$\sum_{j=1}^{N} \omega_jS(\zeta_j)^{-1}$ was not captured to high precision and 
this is reflected in the approximation of the sought eigenpairs. For the choices 
$N=8,\ 16$ and $N=32$, the range of both matrices was captured up to the required tolerance and the 
sought eigenpairs were captured to higher accuracy.

\begin{figure} 
\centering
\includegraphics[width=0.49\textwidth]{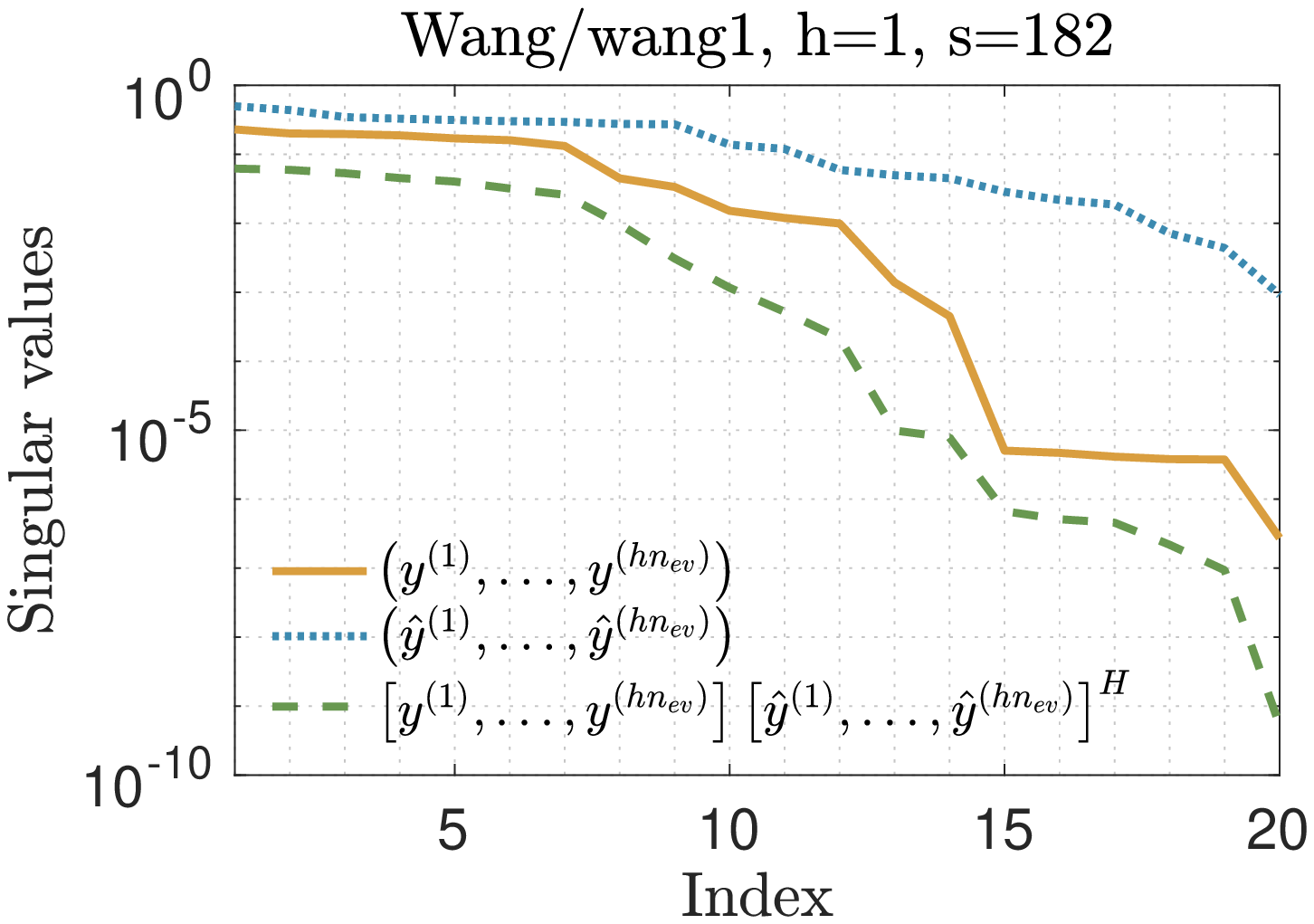}
\includegraphics[width=0.49\textwidth]{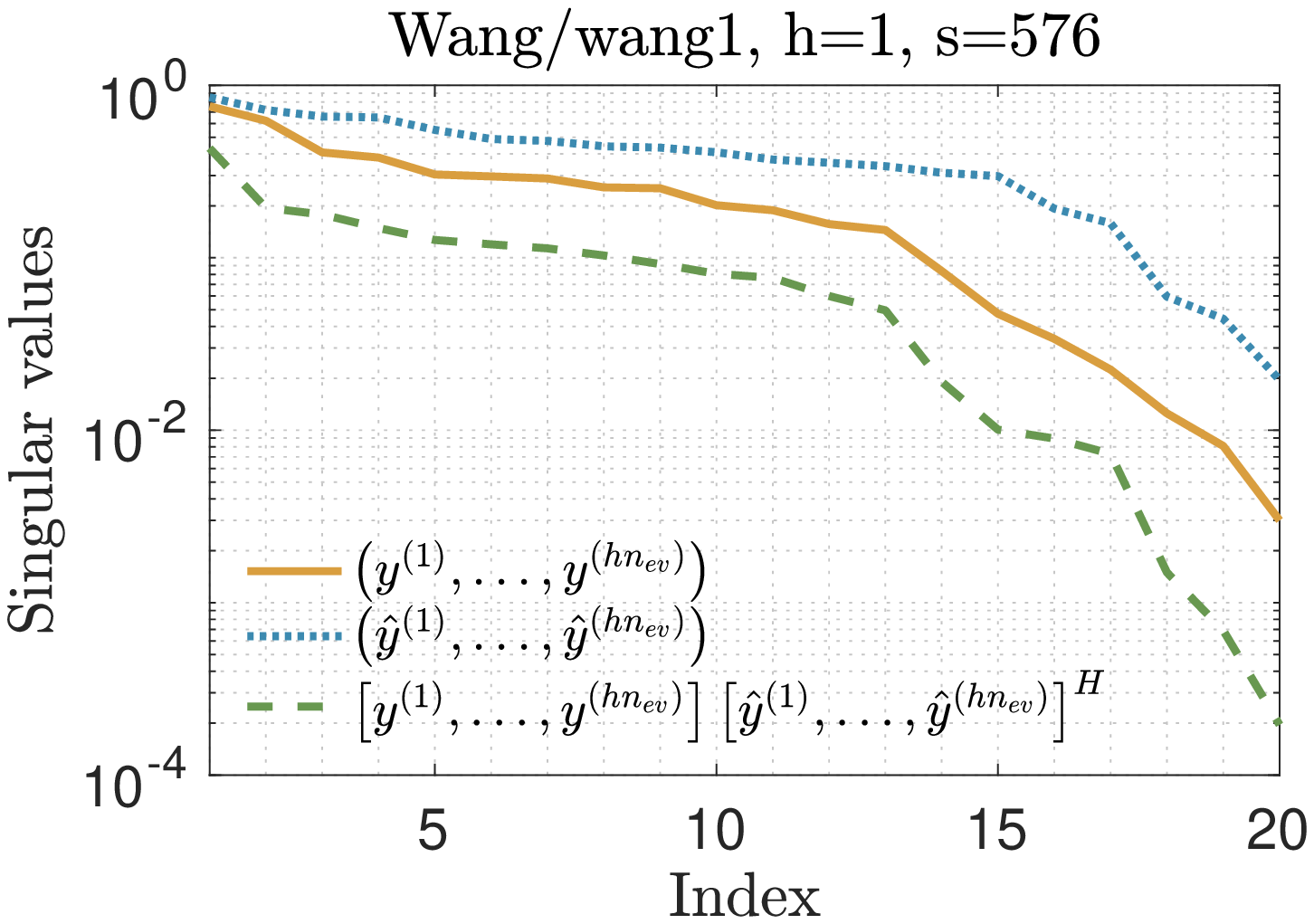}
\includegraphics[width=0.49\textwidth]{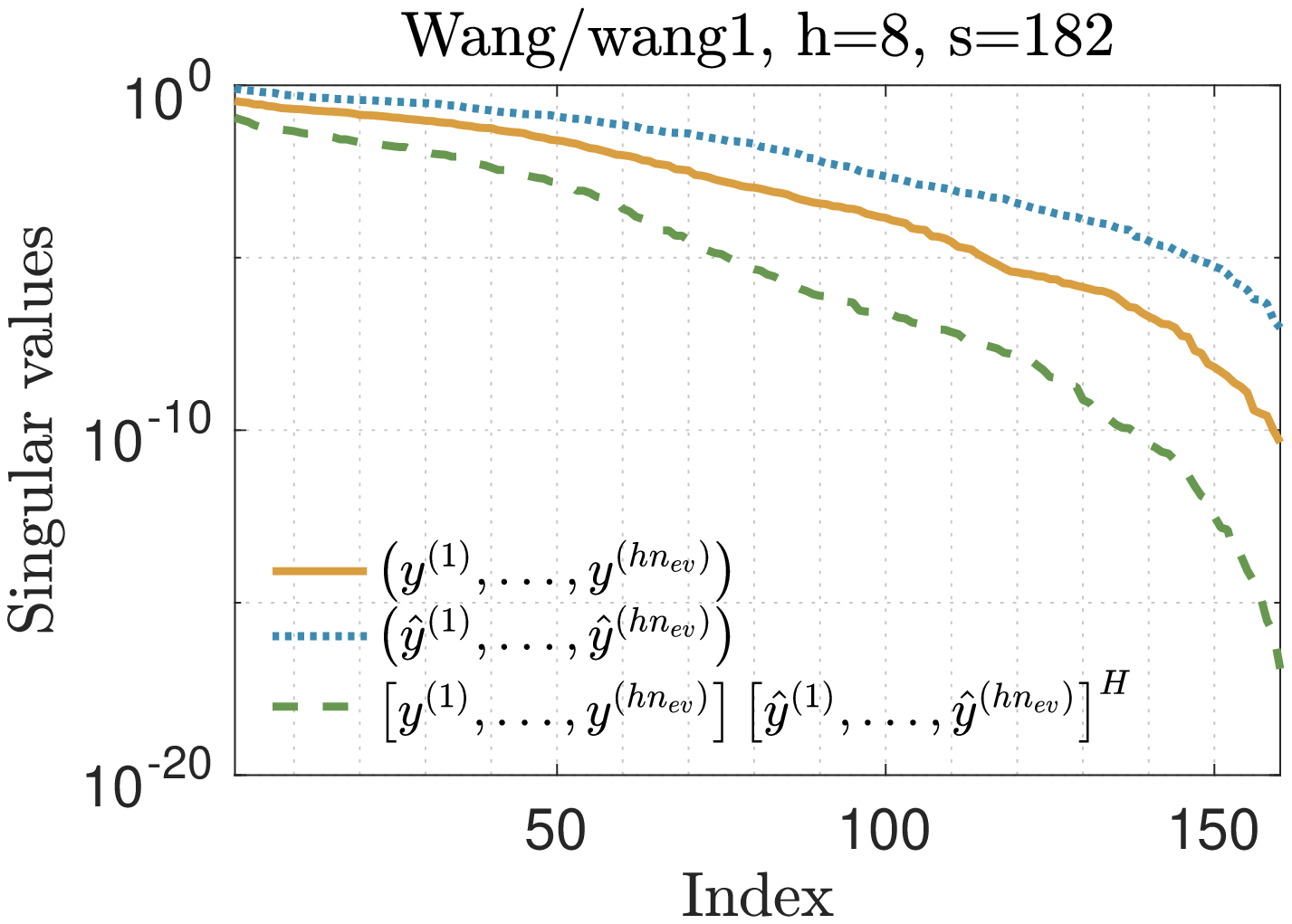}
\includegraphics[width=0.49\textwidth]{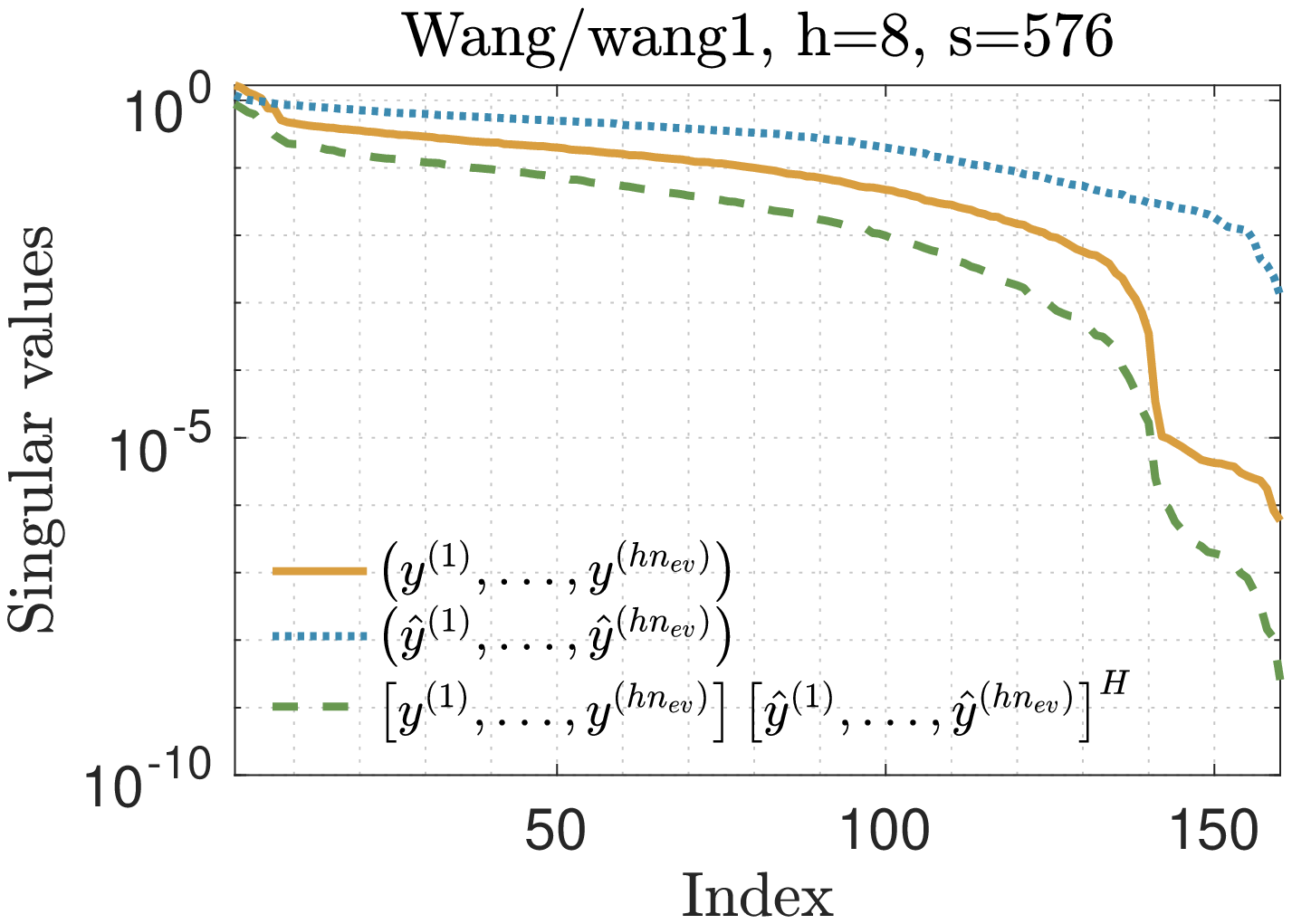}
\includegraphics[width=0.49\textwidth]{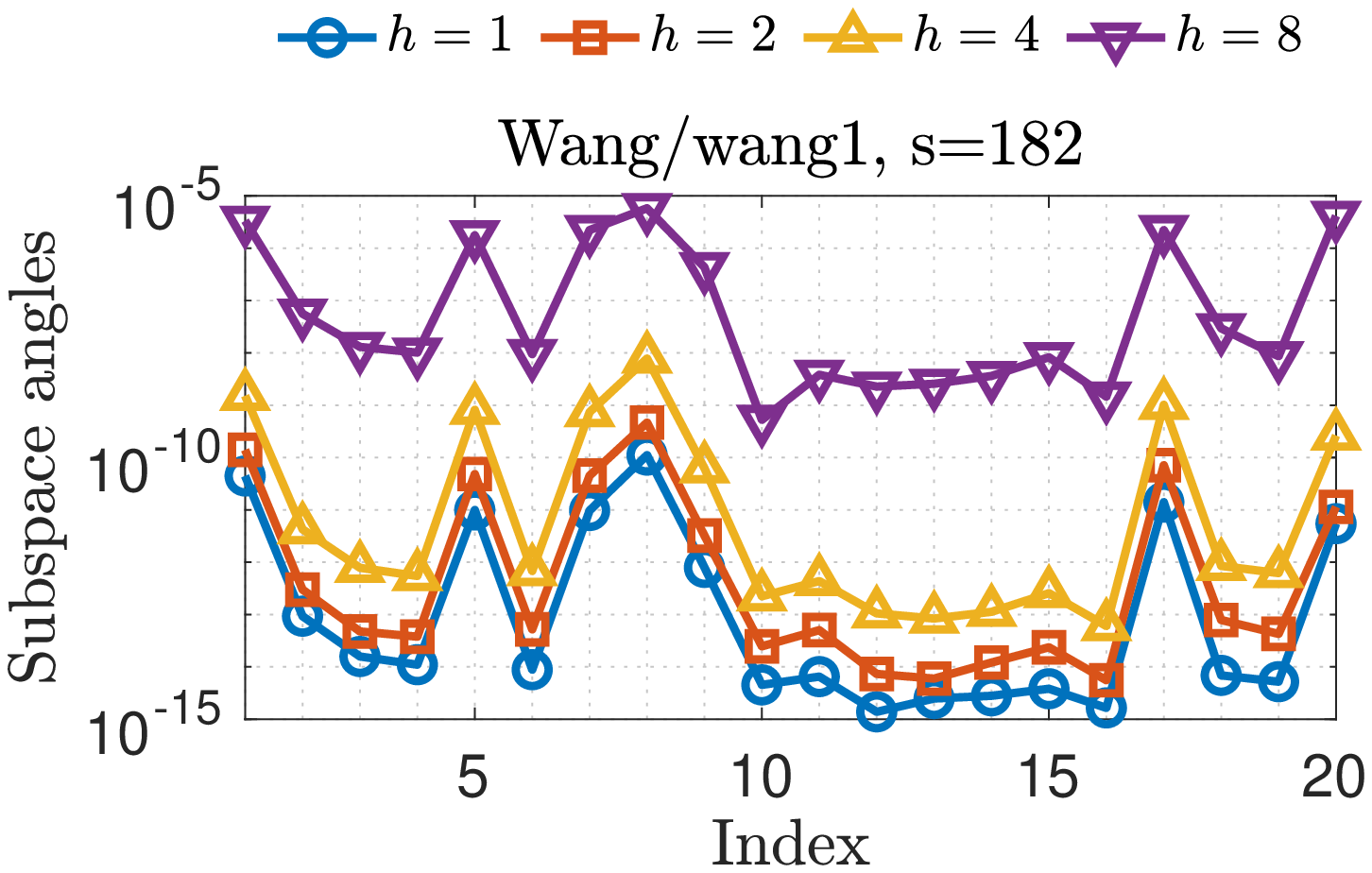}
\includegraphics[width=0.49\textwidth]{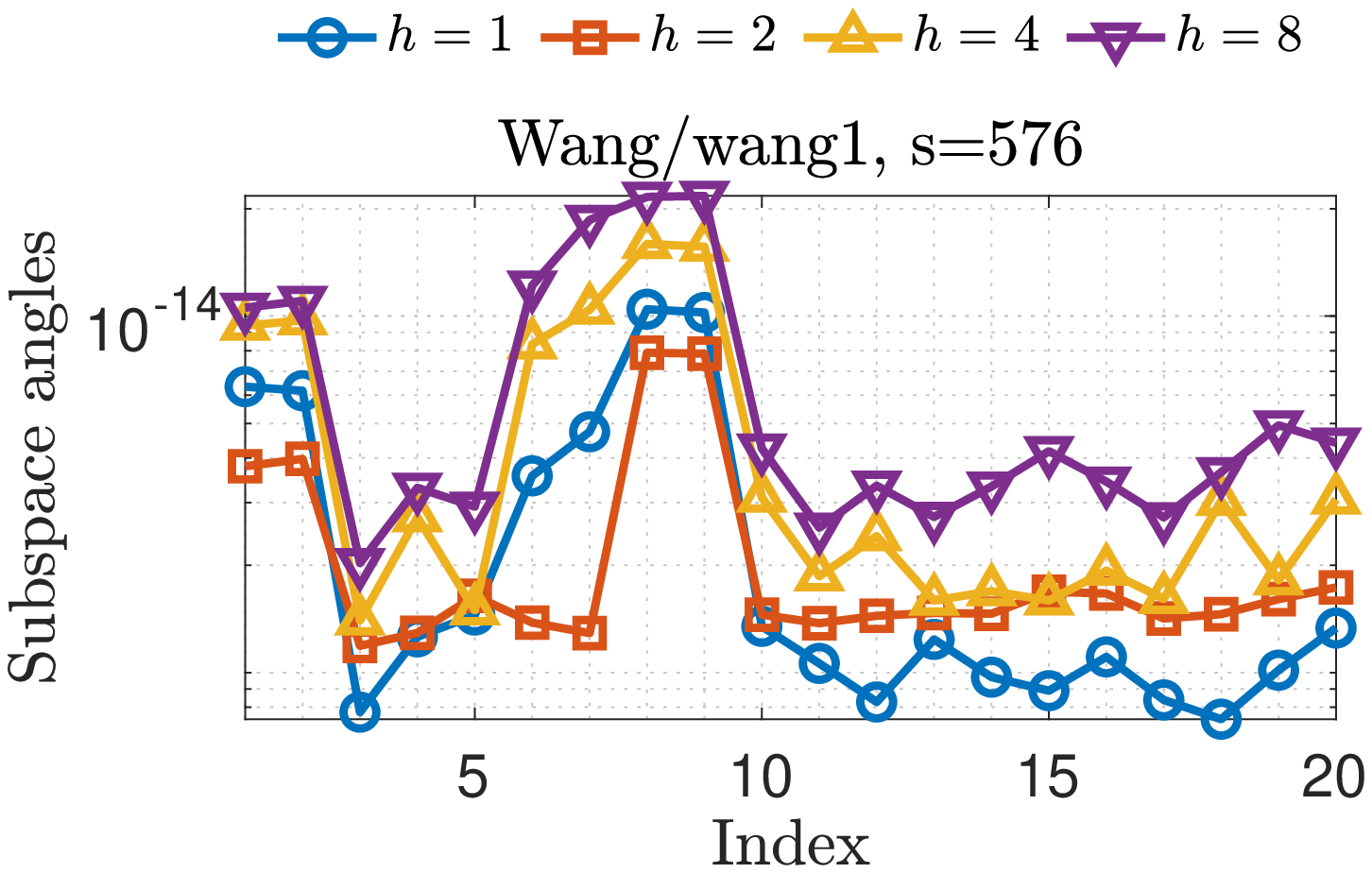}
\caption{{Singular values of matrices $\left[y^{(1)},\ldots,y^{(hn_{ev})}\right]$, 
$\left[\hat{y}^{(1)},\ldots,\hat{y}^{(hn_{ev})}\right]$, and their product
$\left[y^{(1)},\ldots,y^{(hn_{ev})}\right]\left[\hat{y}^{(1)},\ldots,\hat{y}^{(hn_{ev})}
\right]^H$, for different values of $h\in \mathbb{N}$ and size $s$ of the matrix 
$\sum_{j=1}^{N}\omega_j S(\zeta_j)^{-1}$. In all figures we set $n_{ev}=20$. 
First row: $h=1$ and matrix $A$ was partitioned so that the size of each Schur 
complement matrix in $\sum_{j=1}^{N}\omega_j S(\zeta_j)^{-1}$ is equal to $s=182$ (left) 
and $s=576$ (right). Second row: same as before but now we set $h=8$. Third row: 
angles between 
$\texttt{range}\left(\left[y^{(1)},\ldots,y^{(hn_{ev})}\right]
\left[\hat{y}^{(1)},\ldots,\hat{y}^{(hn_{ev})}\right]^H\right)$ 
and vectors $y^{(1)},\ldots,y^{(n_{ev})}$.}} \label{wang1c}
\end{figure}

The results in Figure \ref{wang1a} suggest that larger values of $N$ can lead to 
higher accuracy in Algorithm \ref{alg:0} even though the range of matrices 
$\sum_{j=1}^{N} \omega_jB(\zeta_j)^{-1}F(\zeta_j)S(\zeta_j)^{-1}$ and 
$\sum_{j=1}^{N} \omega_jS(\zeta_j)^{-1}$ is captured highly accurately for all 
$N=8,\ N=16$, and $N=32$. Consider for example the matrix $\sum_{j=1}^{N} \omega_jS(\zeta_j)^{-1}=\left[\rho(\lambda_i)y^{(i)}\right]_{\rho(\lambda_i)\neq 0}
\left[\hat{y}^{(i)}\right]^{H}_{\rho(\lambda_i)\neq 0}$. In practice, even 
though the condition in Proposition \ref{pro2} holds, some of the trailing 
non-zero singular values of the matrix $\sum_{j=1}^{N} \omega_jS(\zeta_j)^{-1}$ 
might be (much) smaller than those of $\left[\rho(\lambda_i)y^{(i)}\right]_{\rho(\lambda_i)\neq 0}$, thus “suppressing" some directions of 
$\texttt{span}\left(\left[\rho(\lambda_i)y^{(i)}\right]_{\rho(\lambda_i)\neq 0}\right)$ 
in the range of the matrix $\sum_{j=1}^{N} \omega_jS(\zeta_j)^{-1}$. 
Since these directions generally have a nonzero projection to the subspace 
$\texttt{span}\left(y^{(1)},\ldots,y^{(n_{ev})}\right)$, we expect that the
accuracy to which we can capture the latter subspace might also be reduced. 
The same holds for $\texttt{span}\left(u^{(1)},\ldots,u^{(n_{ev})}\right)$ and 
the range of matrix 
$\sum_{j=1}^{N} \omega_jB(\zeta_j)^{-1}F(\zeta_j)S(\zeta_j)^{-1}$ as well.

Figure \ref{wang1c} visualizes the above discussion for matrix \texttt{wang1}. 
In particular, we plot the singular values of the matrices $\left[y^{(1)},
\ldots,y^{(hn_{ev})}\right]$, $\left[\hat{y}^{(1)},\ldots,\hat{y}^{(hn_{ev})}\right]$, 
as well as those of their matrix product, for $h=1$ and $h=8$, and $n_{ev}=20$. 
Smaller values of $h$ simulate larger values of $N$. The size of the Schur 
complement matrices was varied as $s=182$ and $s=576$. Observe that 
the singular values of the matrix $\left[y^{(1)},\ldots,y^{(hn_{ev})}\right]
\left[\hat{y}^{(1)},\ldots,\hat{y}^{(hn_{ev})}\right]^H$ trail those of 
$\left[y^{(1)},\ldots,y^{(hn_{ev})}\right]$. As a result, 
some directions of $\texttt{span}\left(y^{(1)},\ldots,y^{(hn_{ev})}\right)$ are 
captured less accurately in 
$\mathtt{range}\left(\left[y^{(1)},\ldots,y^{(hn_{ev})}\right]\left[\hat{y}^{(1)},\ldots,
\hat{y}^{(hn_{ev})}\right]^H\right)$. The latter effect is sketched in the 
bottom row of plots where we plot the angle between the vectors $y^{(1)},\ldots,y^{(n_{ev})}$ and the 
range of matrix $\mathtt{range}\left(\left[y^{(1)},\ldots,y^{(hn_{ev})}\right]\left[\hat{y}^{(1)},
\ldots,\hat{y}^{(hn_{ev})}\right]^H\right)$. Ideally, all angles should be equal 
to zero. However, we observe larger angles when $h$ is larger (i.e., $N$ gets 
smaller) and the vectors $y^{(i)}$ and $\hat{y}^{(i)}$ lie in a lower-dimensional 
subspace (i.e., $s$ gets smaller).

\begin{figure*}[!htbp]
\centering
\includegraphics[width=0.49\textwidth]{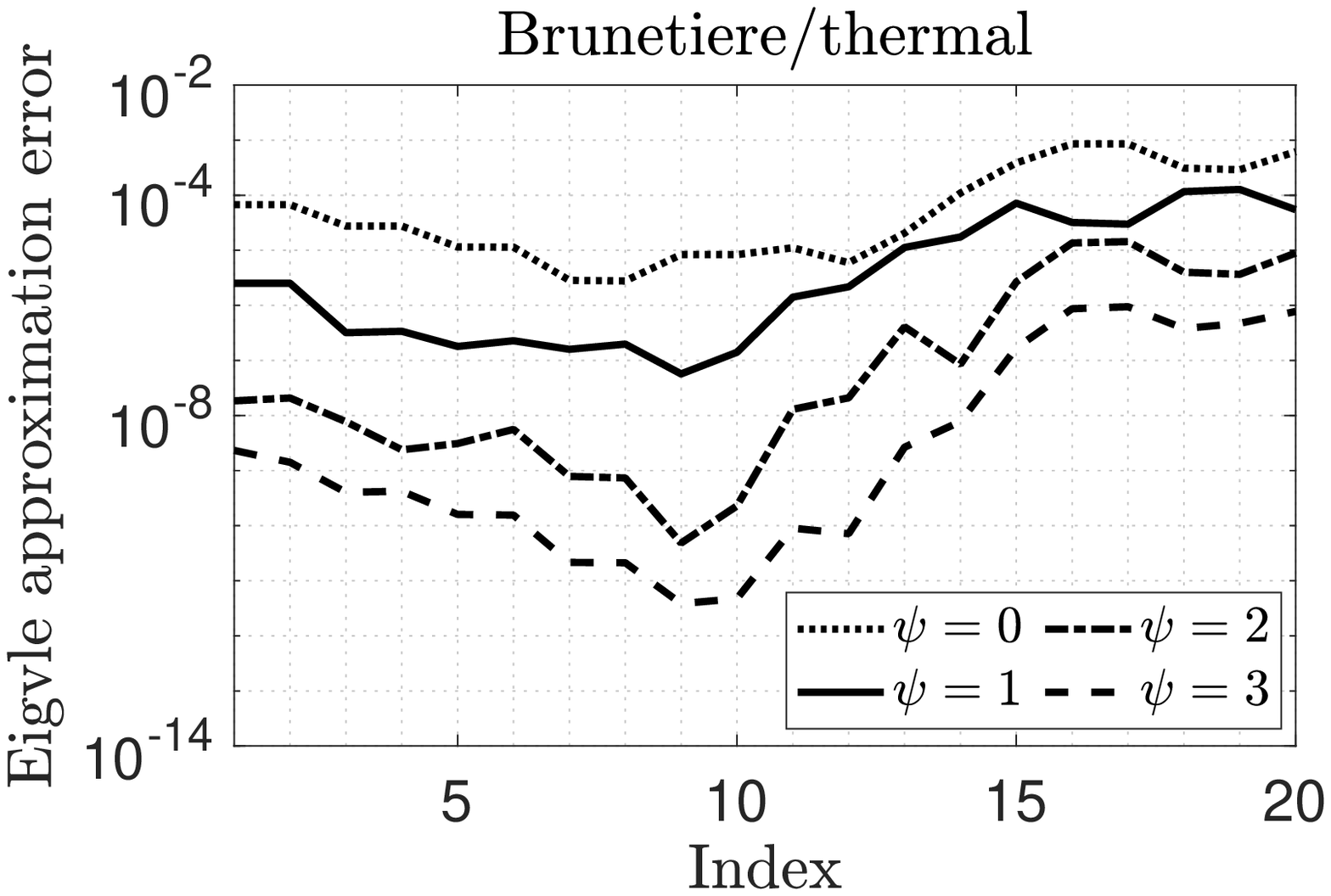}
\includegraphics[width=0.49\textwidth]{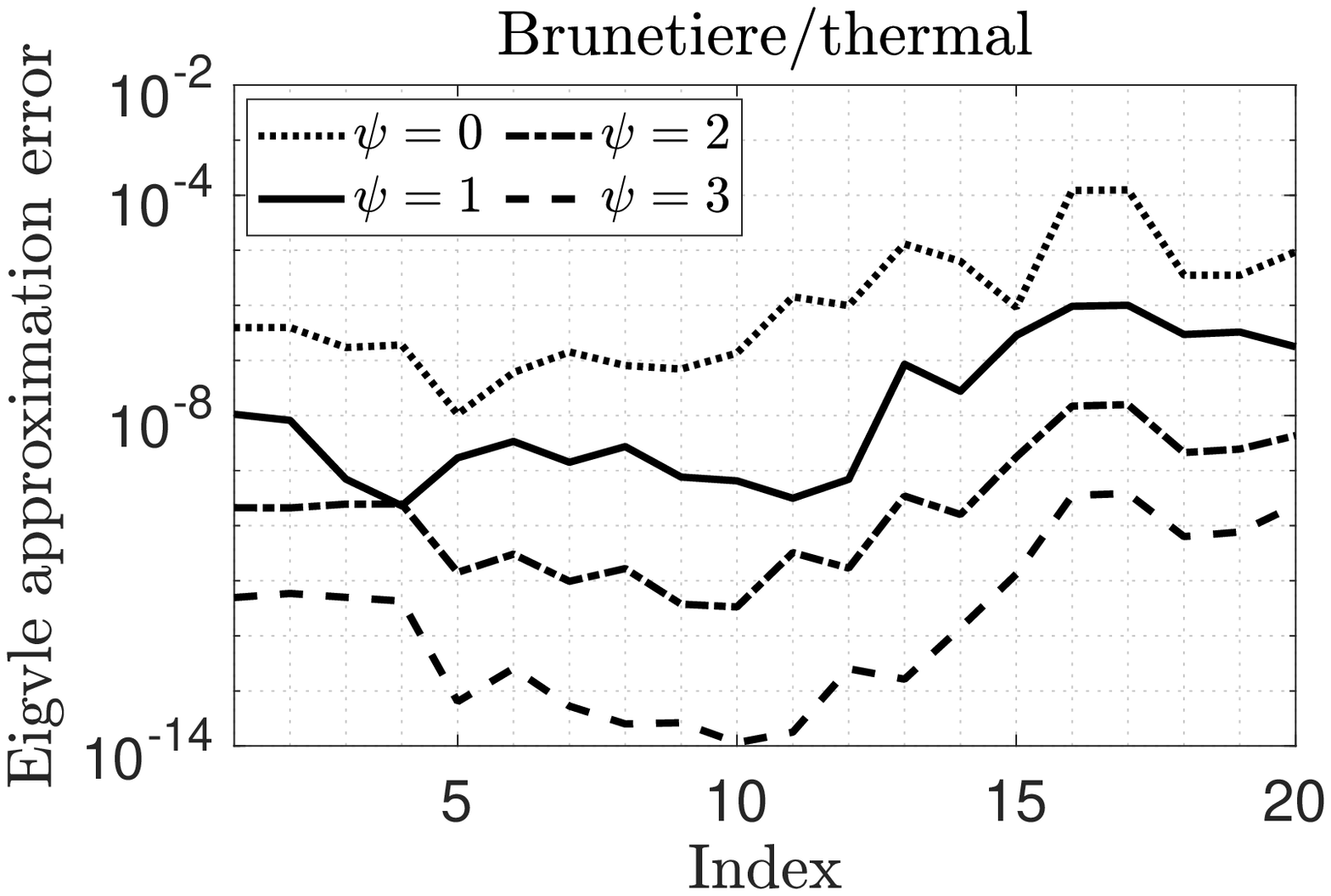}
\includegraphics[width=0.49\textwidth]{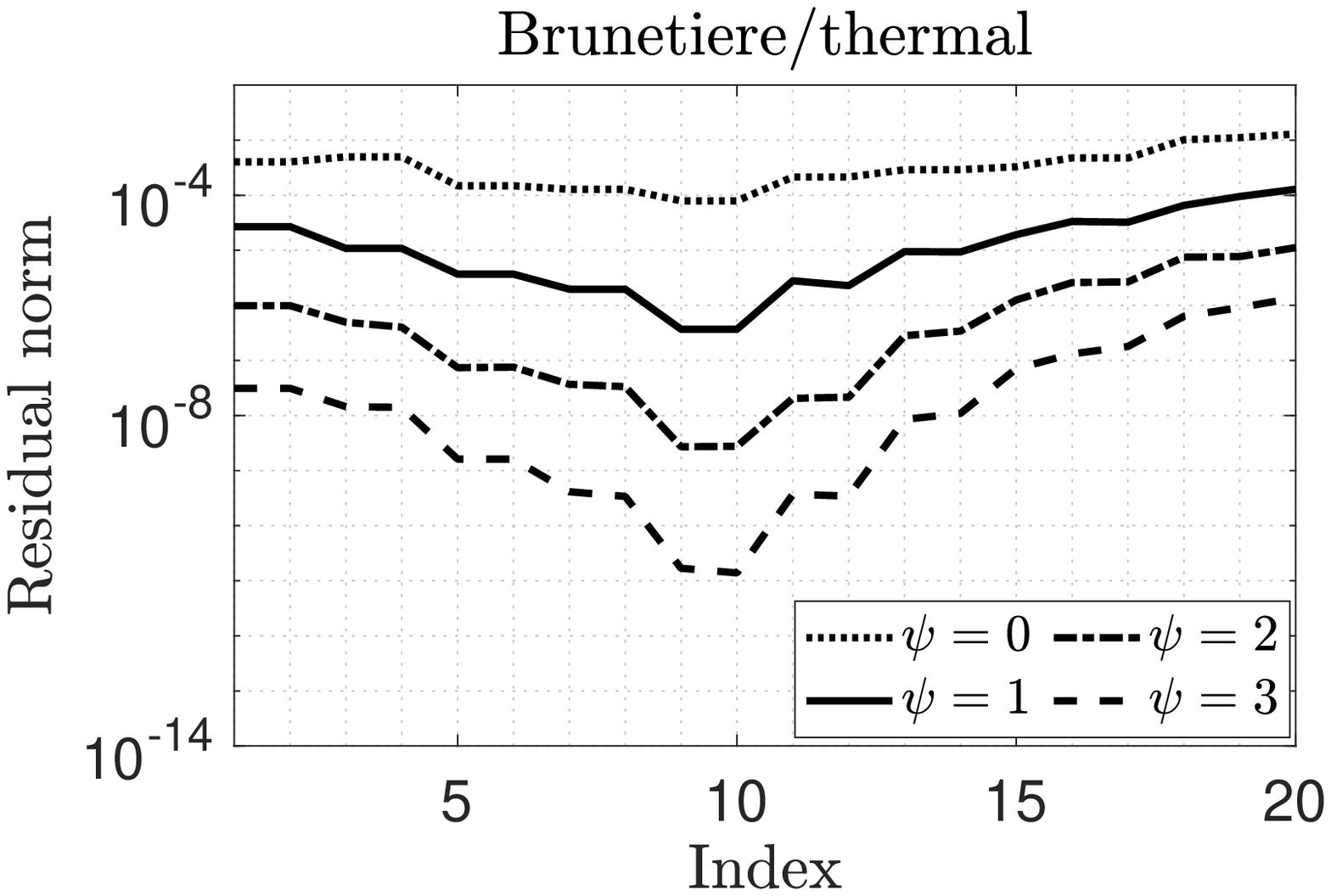}
\includegraphics[width=0.49\textwidth]{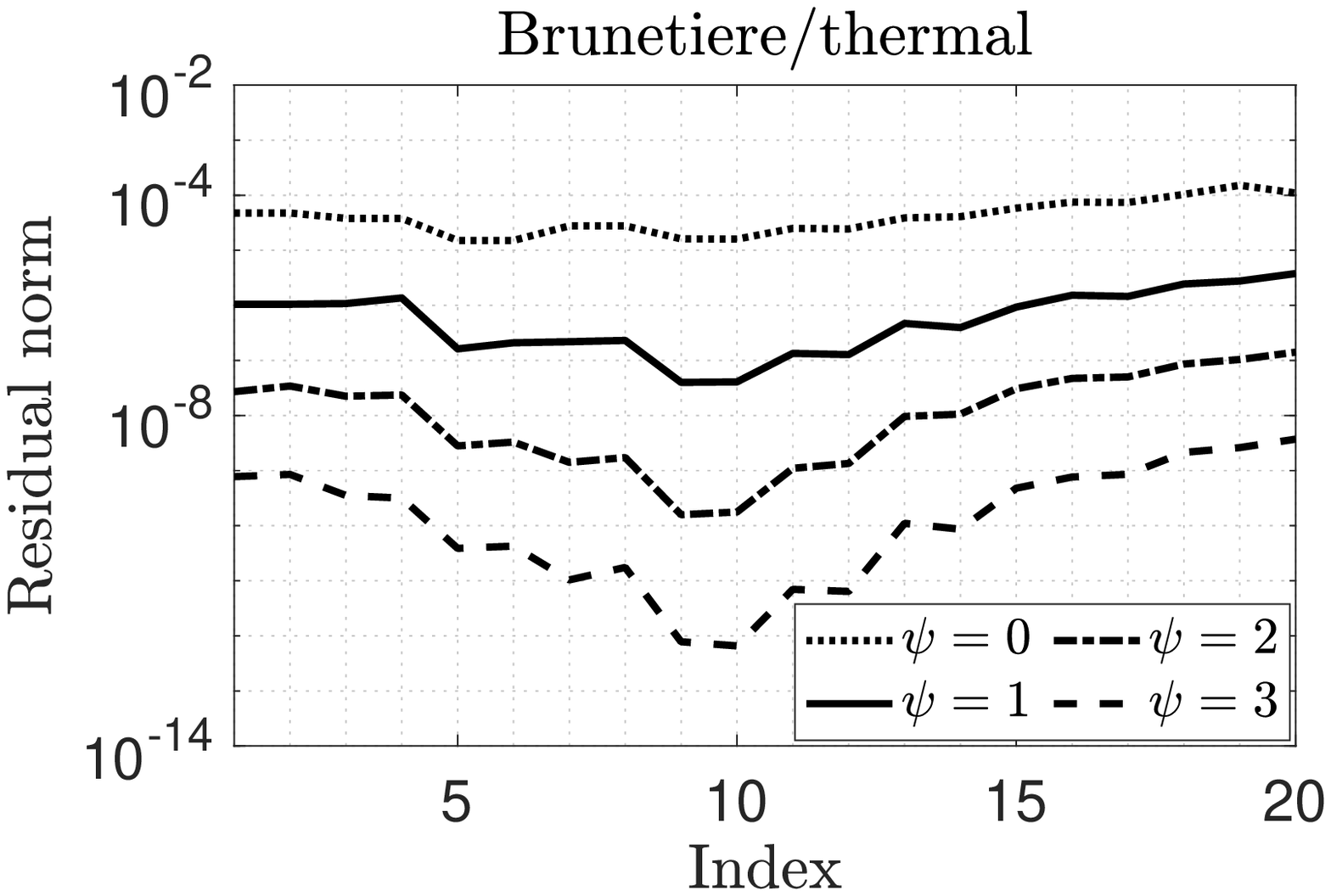}
\caption{Application of Algorithm \ref{alg:1} to the \texttt{thermal} matrix. 
Top row: absolute eigenvalue errors for different values of $\psi$ 
and $\phi=14$ (left) and $\phi=160$ (right). Bottom row: residual norms for 
the same values. The indices of the $x$-axis are organized such that index 
‘$i$' corresponds to the sought eigenvalue with the $i$th smallest real part. 
\label{thermal_alg2}} 
\end{figure*}
Figure \ref{thermal_alg2} plots the eigenvalue approximation errors and corresponding 
residual norms in the approximation of the $n_{ev}$ smallest modulus eigenvalues 
and associated eigenvectors of the \texttt{thermal} matrix by Algorithm \ref{alg:1}. 
The number of computed matrix resolvent terms was set to $\psi=0,\ 1,\ 2$, and $\psi=3$, 
while the number of computed (deflated) eigenvectors was varied to $\phi=14$ and $\phi=160$. 
The choice $\phi=14$ coincides with computing only those eigenvalues of the pencil 
$(B,M_B)$ located inside the disk ${\cal D}$. The number of poles was set equal to 
$N=16$.\footnote{The results obtained for the choices $N=8$ and $N=32$ were essentially 
identical to those for the case $N=16$, and thus are not reported.} As expected, the 
accuracy in the approximation of the sought eigenpairs improves with larger values of 
$\psi$ since the action of the matrix $\left(I-V_\phi \hat{V}_\phi^HM_B\right)B(\lambda_i)^{-1}$ 
is now  approximated more accurately. Similarly, increasing $\phi=14$ to $\phi=160$ 
leads to enhanced accuracy.  {Note that the major improvements in 
accuracy come from increasing the value of $\psi$. This was a general trend observed 
for the rest of our test matrices as well.} Moreover, as was also discussed in Section 
\ref{second_alg}, the accuracy obtained by Algorithm \ref{alg:1} depends on the location 
of each eigenvalue $\lambda_i \in {\cal D}$ since the action of $\left(I-V_\phi \hat{V}_\phi^HM_B\right)B(\lambda_i)^{-1}$ is better approximated for those $\lambda_i \in {\cal D}$ located closer to the center of the disk ${\cal D}$. This is in contrast to Algorithm \ref{alg:0} which provides an almost uniform accuracy for all eigenpairs $(\lambda_i,x^{(i)})$ for which $\lambda_i \in {\cal D}$.

\subsection{Comparisons against subspace iteration with rational filtering}
We now consider the computation of the $n_{ev}=40$ eigenvalues of smallest 
modulus (and their associated eigenvectors) of the matrix pencil 
\texttt{bfw782}, and the matrices \texttt{utm1700b}, \texttt{rdb3200l}, 
\texttt{dw4096}, and \texttt{big}. 
Figure \ref{eigvals} plots the $2n_{ev}$ eigenvalues of smallest modulus 
of the last four matrices. 
\begin{figure*}[!htbp]
\centering
\includegraphics[width=0.49\textwidth]{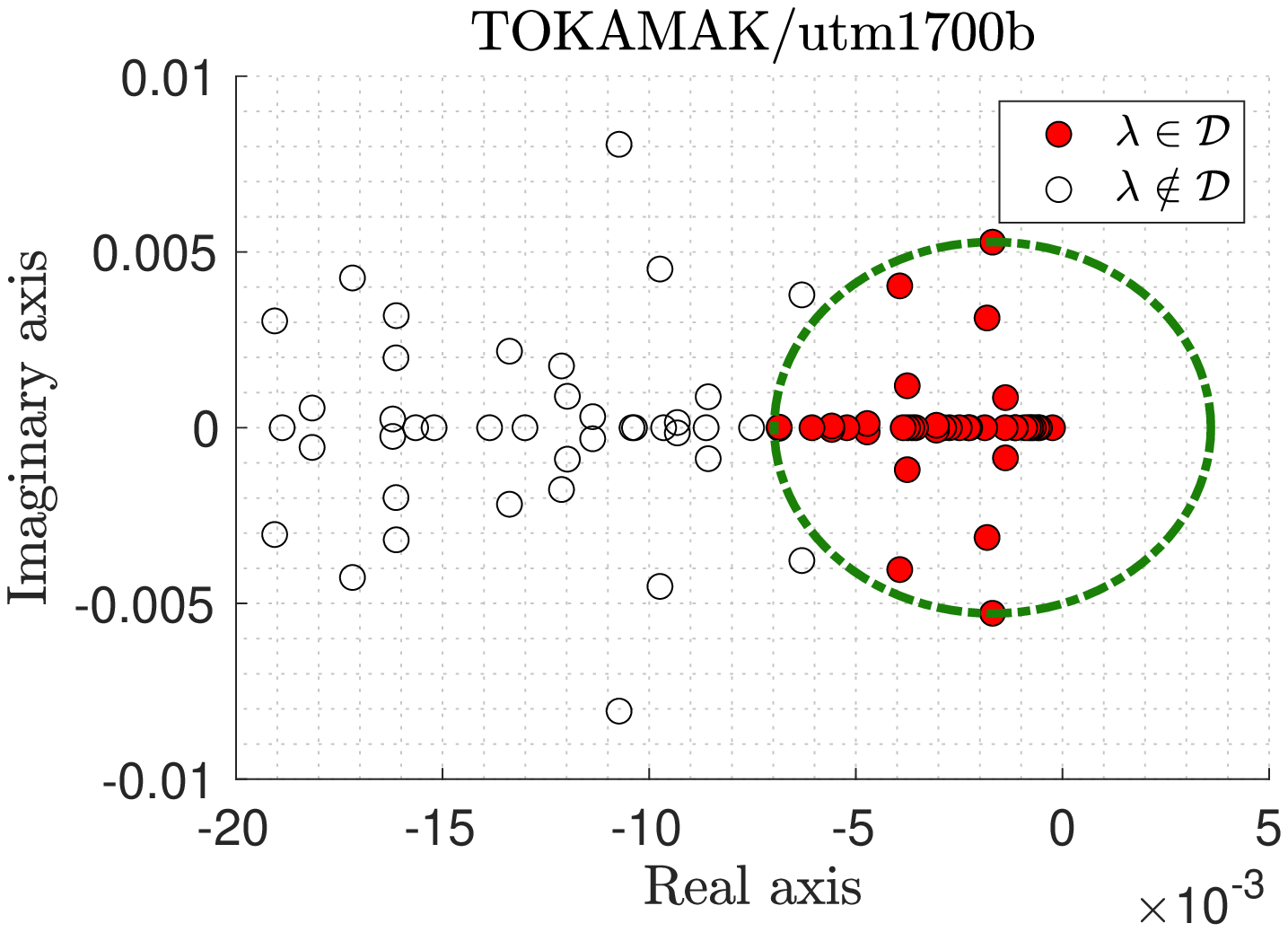}
\includegraphics[width=0.49\textwidth]{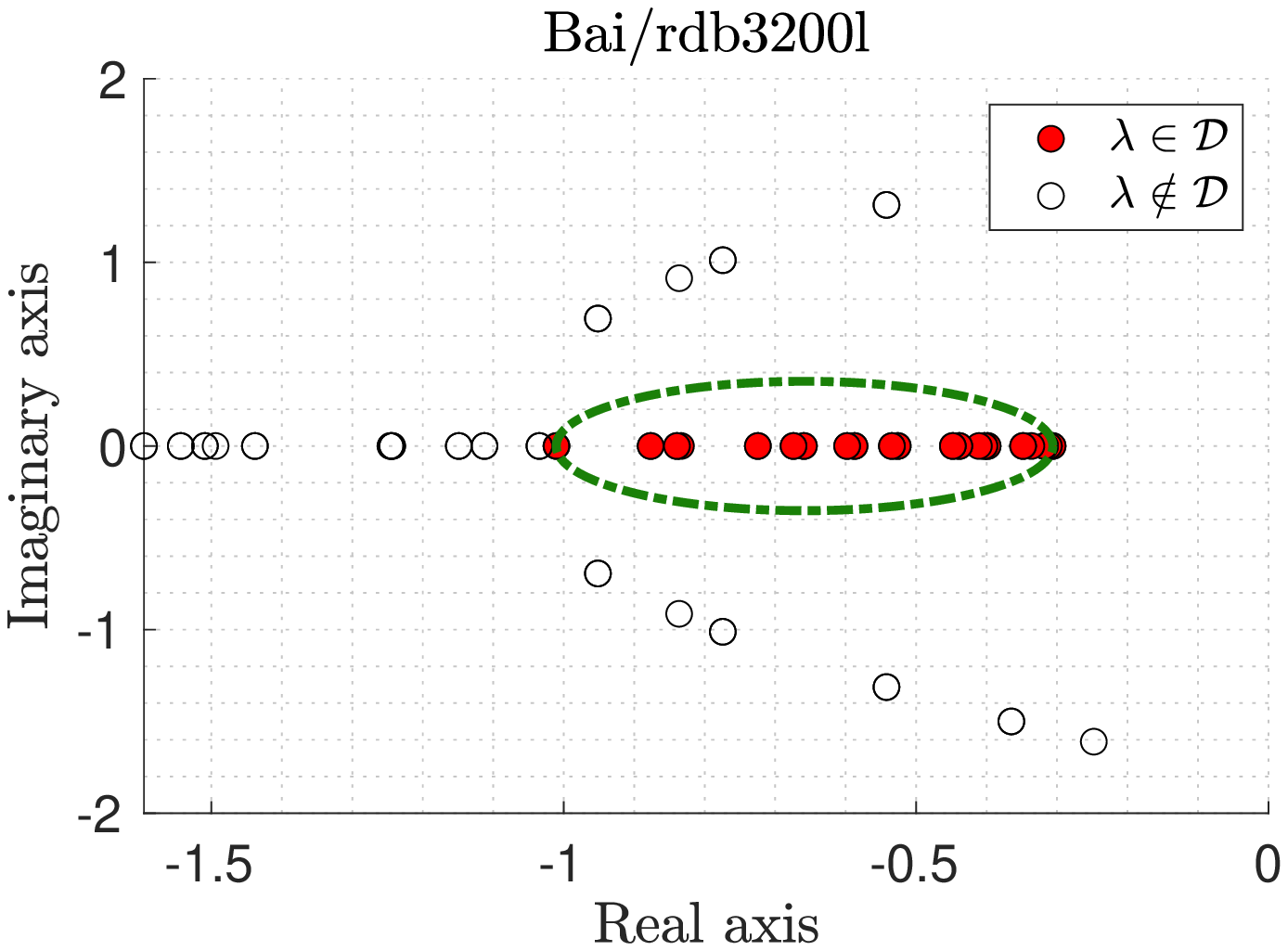}
\includegraphics[width=0.49\textwidth]{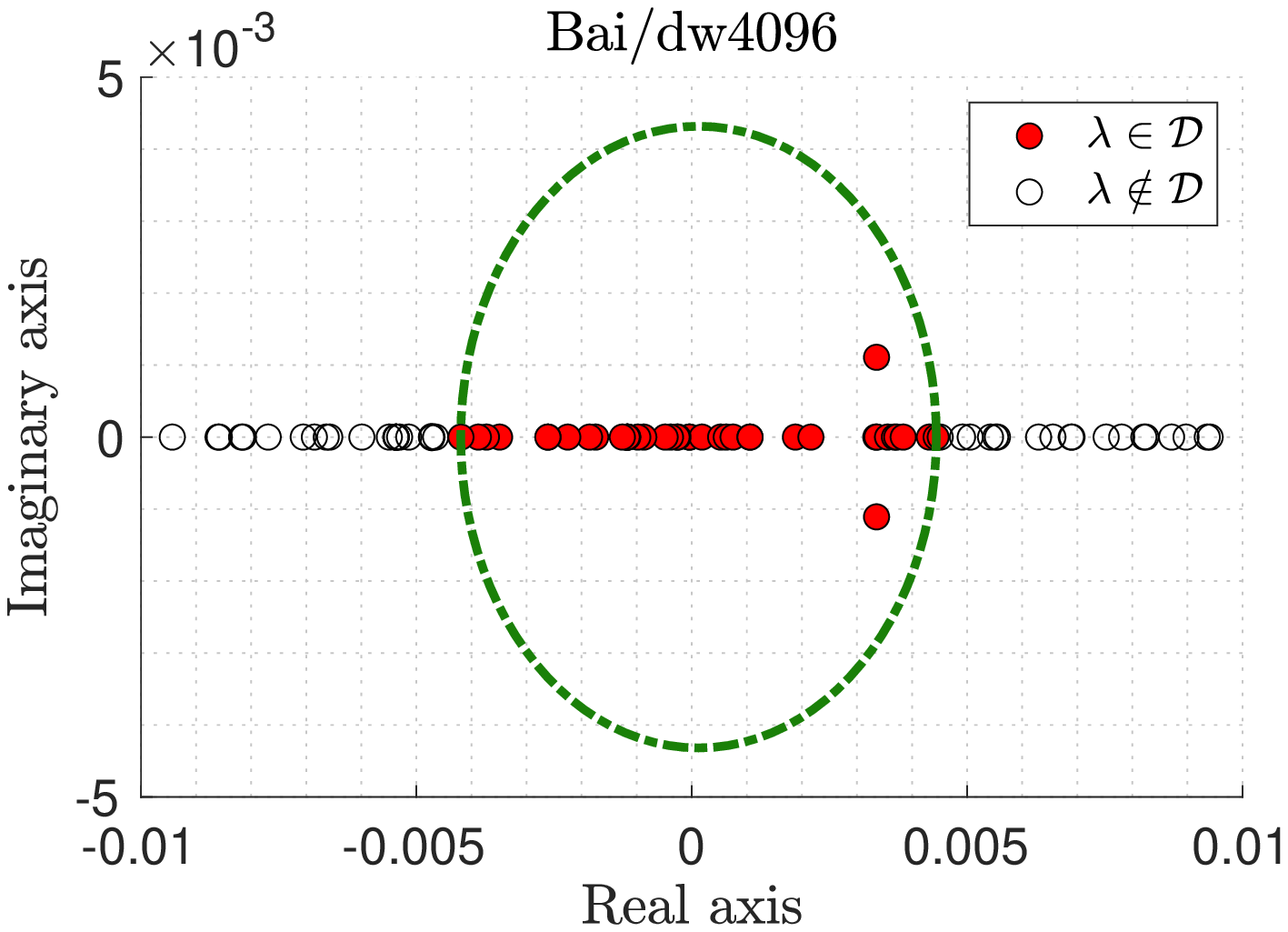}
\includegraphics[width=0.49\textwidth]{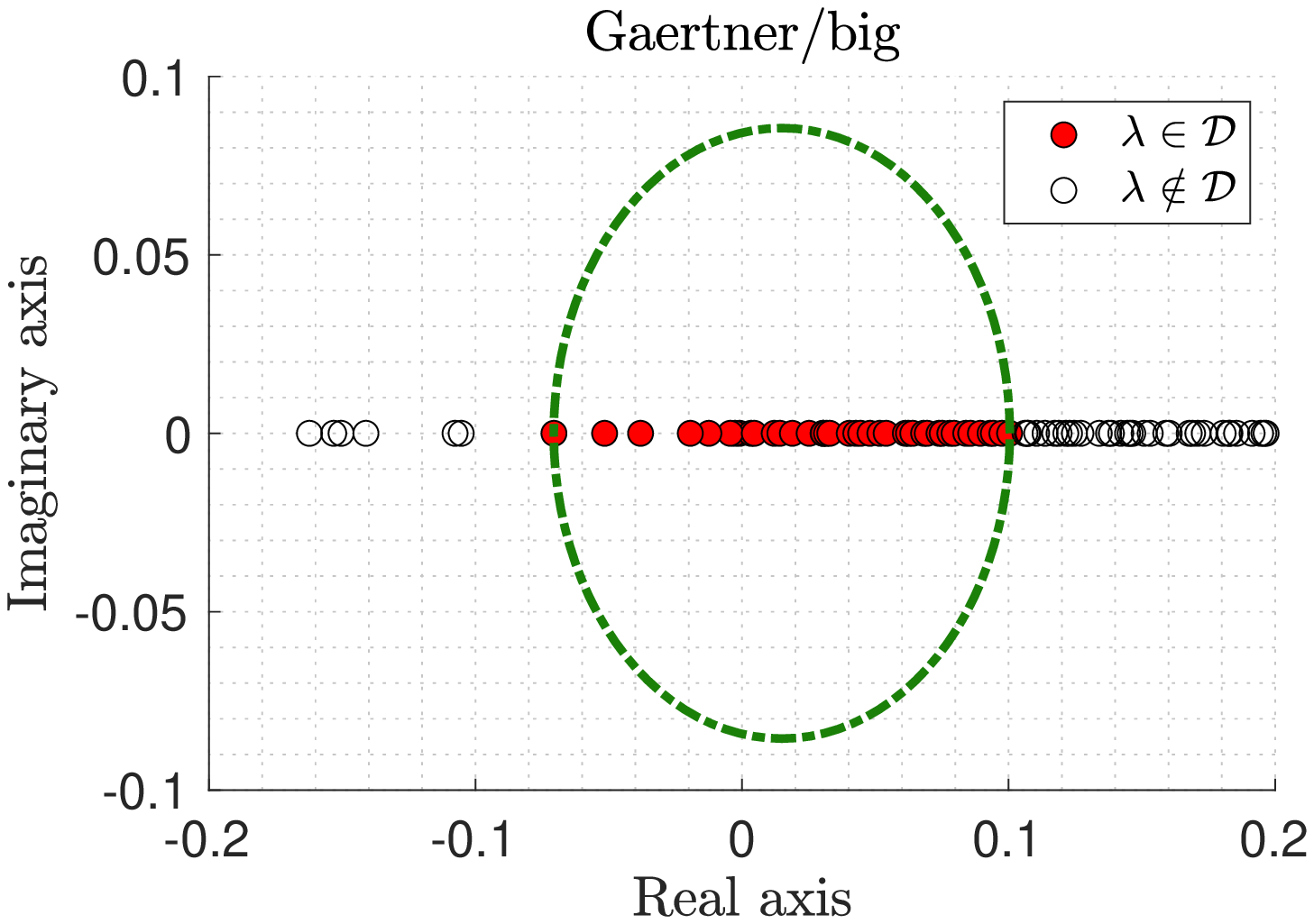}
\caption{Plot of the $2n_{ev}$ eigenvalues of smallest modulus of 
some of the matrices listed in Table \ref{testmat}.\label{eigvals}} 
\end{figure*}
\begin{table*}[th!]
\centering
\caption{Minimum and maximum eigenvalue errors and residual norms 
returned by Algorithm \ref{alg:0} for the test matrices listed in 
Table \ref{testmat}. The total number of iterations performed by 
Algorithm \ref{alg:0} is also reported. A value ‘F' indicates that 
the loop in Algorithm \ref{alg:0} did not terminate within four 
hundred iterations.}
	{\scriptsize \begin{tabular}{clcccc|c}
			\toprule \midrule
			& {\rm } & \multicolumn{5}{c}{Algorithm \ref{alg:0}}  \\
			\cmidrule(lr){3-7}
			& {} & $\min |\lambda-\hat{\lambda}|$ & $\max |\lambda-\hat{\lambda}|$ & $\min \hat{\rho}$ & $\max \hat{\rho}$ & {\rm It} \\
			\midrule
			\parbox[t]{2mm}{\multirow{4}{*}{\rotatebox[origin=c]{90}{\texttt{bfw782}}}} 
			& $\boldsymbol{N=4}$    & $4.3 \times 10^{-1}$  & $2.1 \times 10^{-0}$  & $3.2 \times 10^{-2}$ & $2.4 \times 10^{-1}$ & 87  \\
			& $\boldsymbol{N=8}$    & $1.5 \times 10^{-2}$  & $3.2 \times 10^{-1}$  & $2.9 \times 10^{-4}$ & $2.0 \times 10^{-2}$ & 87   \\
			& $\boldsymbol{N=16}$  & $1.3 \times 10^{-7}$  & $4.5 \times 10^{-4}$  & $5.9 \times 10^{-8}$ & $1.0 \times 10^{-6}$ & 76   \\
			& $\boldsymbol{N=32}$  & $1.1 \times 10^{-9}$  & $7.4 \times 10^{-6}$  & $6.6 \times 10^{-10}$ & $2.8 \times 10^{-8}$& 55   \\   
			\midrule
			\parbox[t]{2mm}{\multirow{4}{*}{\rotatebox[origin=c]{90}{\texttt{utm1700b}}}} 
			& $\boldsymbol{N=4}$   & $1.4 \times 10^{-8}$  & $1.3 \times 10^{-4}$  & $2.0 \times 10^{-7}$ & $3.9 \times 10^{-6}$ & 235  \\
			& $\boldsymbol{N=8}$   & $1.2 \times 10^{-9}$  & $3.2 \times 10^{-6}$  & $6.9 \times 10^{-9}$ & $2.5 \times 10^{-7}$ & 134  \\
			& $\boldsymbol{N=16}$  & $8.0 \times 10^{-12}$  & $7.0 \times 10^{-8}$  & $5.9 \times 10^{-10}$ & $4.0 \times 10^{-8}$ & 72  \\
			& $\boldsymbol{N=32}$  & $1.6 \times 10^{-11}$  & $3.1 \times 10^{-8}$  & $6.6 \times 10^{-10}$ & $6.0 \times 10^{-8}$& 53   \\   
			\midrule
			\parbox[t]{2mm}{\multirow{4}{*}{\rotatebox[origin=c]{90}{\texttt{rdb3200l}}}} 
			& $\boldsymbol{N=4}$   & $2.0 \times 10^{-5}$  & $1.7 \times 10^{-3}$  & $9.1 \times 10^{-4}$ & $1.5 \times 10^{-2}$ & 296  \\
			& $\boldsymbol{N=8}$   & $7.2 \times 10^{-9}$  & $5.7 \times 10^{-4}$  & $2.3 \times 10^{-7}$ & $3.5 \times 10^{-6}$ & 161  \\
			& $\boldsymbol{N=16}$  & $1.6 \times 10^{-12}$  & $3.9 \times 10^{-9}$  & $7.5 \times 10^{-10}$ & $4.5 \times 10^{-8}$ & 77  \\
			& $\boldsymbol{N=32}$  & $1.3 \times 10^{-15}$  & $2.3 \times 10^{-13}$  & $2.1 \times 10^{-11}$ & $5.6 \times 10^{-10}$& 52  \\   
			\midrule
			\parbox[t]{2mm}{\multirow{4}{*}{\rotatebox[origin=c]{90}{\texttt{dw4096}}}} 
			& $\boldsymbol{N=4}$   & $2.4 \times 10^{-8}$   & $4.0 \times 10^{-5}$  & $2.9 \times 10^{-5}$  & $1.4 \times 10^{-2}$  & F    \\
			& $\boldsymbol{N=8}$   & $7.2 \times 10^{-12}$  & $6.1 \times 10^{-9}$   & $1.9 \times 10^{-8}$  & $3.1 \times 10^{-5}$ & 329  \\
			& $\boldsymbol{N=16}$  & $2.6 \times 10^{-15}$  & $1.8 \times 10^{-10}$  & $2.7 \times 10^{-10}$ & $5.5 \times 10^{-6}$ & 147  \\
			& $\boldsymbol{N=32}$  & $9.8 \times 10^{-15}$  & $3.4 \times 10^{-13}$  & $2.6 \times 10^{-12}$ & $1.5 \times 10^{-11}$& 75  \\   
			\midrule
			\parbox[t]{2mm}{\multirow{4}{*}{\rotatebox[origin=c]{90}{\texttt{big}}}} 
			& $\boldsymbol{N=4}$   & $4.8 \times 10^{-6}$  & $3.0 \times 10^{-2}$  & $3.8 \times 10^{-4}$ & $1.1 \times 10^{-2}$ & 377   \\
			& $\boldsymbol{N=8}$   & $1.5 \times 10^{-11}$  & $3.6 \times 10^{-6}$  & $2.1 \times 10^{-6}$ & $1.2 \times 10^{-4}$ & 226  \\
			& $\boldsymbol{N=16}$  & $6.2 \times 10^{-13}$  & $1.7 \times 10^{-9}$  & $1.0 \times 10^{-8}$ & $2.8 \times 10^{-6}$ & 108  \\
			& $\boldsymbol{N=32}$  & $1.3 \times 10^{-14}$  & $6.3 \times 10^{-11}$  & $1.8 \times 10^{-9}$ & $1.3 \times 10^{-6}$& 68  \\   
			\midrule
			\bottomrule	        		       				           			
	\end{tabular}}\label{tab:table1}
\end{table*}
Table \ref{tab:table1} lists the maximum and minimum absolute eigenvalue 
errors and associated residual norms returned by Algorithm \ref{alg:0} 
as $N$ varies. The number of iterations performed by Algorithm \ref{alg:0} 
is also listed. As expected, larger values of $N$ lead to fewer iterations 
since the rational filter $\rho(\zeta)$ decays faster outside ${\cal D}$. 
In agreement with the results discussed in Figure \ref{wang1c}, larger 
values of $N$ also lead to higher accuracy.
\begin{table*}[tbhp]
\caption{Maximum eigenvalue error and residual norm returned 
by Algorithm \ref{alg:1} for some of the test matrices listed 
in Table \ref{testmat}. These results were obtained by setting
$N=16$ and varying the value of computed resolvent terms $\psi$. } \label{tab:table3}
\setlength\tabcolsep{4pt}
\centering
{\scriptsize
	\begin{tabular}{ll|c|c|c|c|c}
			\toprule \midrule
			&  & \texttt{bfw782} & \texttt{utm1700b} & \texttt{rdb3200l} & \texttt{wd4096} & \texttt{big} \\
			\midrule
			\parbox[t]{2mm}{\multirow{4}{*}{\rotatebox[origin=c]{90}{$|\lambda-\hat{\lambda}|$}}} 
			& $\psi=0$    & $9.0 \times 10^{-2}$ & $5.1 \times 10^{-3}$ & $1.5 \times 10^{-1}$ & $2.8 \times 10^{-1}$ & $7.2 \times 10^{-2}$  \\
			& $\psi=1$    & $5.9 \times 10^{-3}$ & $1.0 \times 10^{-5}$ & $9.1 \times 10^{-3}$ & $1.2 \times 10^{-1}$ & $2.0 \times 10^{-3}$  \\
			& $\psi=2$    & $1.5 \times 10^{-4}$ & $2.8 \times 10^{-7}$ & $2.4 \times 10^{-4}$ & $2.9 \times 10^{-3}$ & $1.4 \times 10^{-5}$  \\
			& $\psi=3$    & $8.4 \times 10^{-6}$ & $9.0 \times 10^{-8}$ & $7.5 \times 10^{-6}$ & $9.7 \times 10^{-5}$ & $8.9 \times 10^{-7}$  \\
			\midrule
			\parbox[t]{2mm}{\multirow{4}{*}{\rotatebox[origin=c]{90}{$\hat{\rho}$}}} 
			& $\psi=0$      & $4.3 \times 10^{-2}$ &   $1.2 \times 10^{-3}$ &   $1.0 \times 10^{-0}$ &  $4.6 \times 10^{-1}$& $2.6 \times 10^{-1}$ \\
			& $\psi=1$      & $4.1 \times 10^{-3}$ &   $4.7 \times 10^{-4}$ &   $6.0 \times 10^{-2}$ &  $4.3 \times 10^{-1}$& $1.1 \times 10^{-1}$ \\
			& $\psi=2$      & $2.4 \times 10^{-3}$ &   $2.8 \times 10^{-6}$ &   $1.6 \times 10^{-3}$ &  $3.0 \times 10^{-2}$& $4.0 \times 10^{-3}$ \\
			& $\psi=3$      & $3.8 \times 10^{-5}$ &   $6.6 \times 10^{-7}$ &   $7.3 \times 10^{-5}$ &  $1.2 \times 10^{-4}$& $6.2 \times 10^{-5}$ \\
			\midrule
			\bottomrule	        		       				           			
	\end{tabular}}
\end{table*}
Additionally, Table \ref{tab:table3} lists the maximum eigenvalue error and residual
norm returned by Algorithm \ref{alg:1} when the value of $\phi$ is set equal to the 
number of eigenvalues located inside the disk ${\cal D}$ and $\psi$ varies. 

\begin{table*}[th!]
\caption{Average number of linear systems of the form 
$B(\zeta_j)x_d=b_d$ and $S(\zeta_j)x_s=b_s$ solved per
pole $\zeta_j,\ j=1,\ldots,N$. For Algorithm \ref{alg:1} we consider 
only the case $\psi=3$. For RSI we set  
$m:=m_1=1.1n_{ev},\ m:=m_2=1.5n_{ev},$ and $m:=m_3=2n_{ev}$. }
\centering
\setlength\tabcolsep{3pt}
{\scriptsize
	\begin{tabular}{ll|c|c|c|c|c|c|c|c|c|c}
			\toprule \midrule
			& {\rm } & \multicolumn{2}{c}{\texttt{bfw782}} & \multicolumn{2}{c}{\texttt{utm1700b}} & \multicolumn{2}{c}{\texttt{rdb3200l}} & \multicolumn{2}{c}{\texttt{dw4096}} & \multicolumn{2}{c}{\texttt{big}}\\
			\cmidrule(lr){3-4}\cmidrule(lr){5-6}\cmidrule(lr){7-8}\cmidrule(lr){9-10}
			\cmidrule(lr){11-12}
			& {} & $B(\zeta_j)$ & $S(\zeta_j)$ & $B(\zeta_j)$ & $S(\zeta_j)$ & $B(\zeta_j)$ & $S(\zeta_j)$ & $B(\zeta_j)$ & $S(\zeta_j)$ & $B(\zeta_j)$ & $S(\zeta_j)$ \\
			\midrule
			\parbox[t]{2mm}{\multirow{4}{*}{\rotatebox[origin=c]{90}{$\boldsymbol{N=8}$}}} 
			& Alg. \ref{alg:0}     & 87 & 87 & 134 &134 & 161 & 161 & 329 & 329 & 226 &226 \\ 
			& Alg. \ref{alg:1}     & 112 & 87 & 76 & 134 & 71 & 161 & 176 & 329 & 127 & 226 \\  
			& RSI($m_1$)           & 264 &132 & 616&308 & 616& 308& 616& 308& 704&352\\ 
			& RSI($m_2$)           & 120 & 60&240 & 120& 240& 120& 480& 240& 480&240\\
			& RSI($m_3$)           & 160 & 80&320 & 160& 320& 160& 320& 160& 320&160\\
			\midrule
			\parbox[t]{2mm}{\multirow{4}{*}{\rotatebox[origin=c]{90}{$\boldsymbol{N=16}$}}} 
			& Alg. \ref{alg:0}     & 76 & 76 & 72 & 72 & 77 & 77 & 147& 147&108 &108\\  
			& Alg. \ref{alg:1}     & 50 & 76 & 23 & 72 & 26 & 77 & 42 & 147 & 33 & 108\\ 
			& RSI($m_1$)           & 440 & 220& 352& 176& 352& 176 & 616&308 & 352&176\\ 
			& RSI($m_2$)           & 360 & 180& 120& 60 &240 & 120& 240& 120& 240& 120\\
			& RSI($m_3$)           & 320 & 160& 160& 80& 160& 80& 160& 80& 160&80\\
			\midrule
			\parbox[t]{2mm}{\multirow{4}{*}{\rotatebox[origin=c]{90}{$\boldsymbol{N=32}$}}} 
			& Alg. \ref{alg:0}     & 55  & 55 & 53& 53& 52& 52&75 &75 & 68 & 68\\  
			& Alg. \ref{alg:1}     & 20 & 55 & 9 & 53& 10 & 52 & 12 & 75 & 12 & 68 \\ 
			& RSI($m_1$)           & 264 & 132& 176& 88& 264&132 & 616&308 & 352&176\\ 
			& RSI($m_2$)           & 240 & 120& 120& 60& 120& 60& 240& 120& 240&120\\
			& RSI($m_3$)           & 160 & 80& 160& 80& 160& 80& 160& 80& 160&80\\
			\midrule
			\bottomrule	        		       				           			
	\end{tabular}}\label{tab:table2}
\end{table*}

Table \ref{tab:table2} lists the average number (with respect to $N$) of 
linear systems  of the form $B(\zeta)x_d=b_d$ and $S(\zeta)x_s=b_s$ solved 
by Algorithm \ref{alg:0}, Algorithm \ref{alg:1}, and RSI.\footnote{{These 
numbers do not include the cost to compute a good approximation of $n_{ev}$ 
in RSI.}} The loop in RSI 
terminates when the maximum residual in the approximation of the $n_{ev}$ 
sought eigenpairs becomes smaller than or equal to the maximum residual norm 
achieved by Algorithm \ref{alg:0}. These residual norms listed in Table 
\ref{tab:table1}. Notice that the number of linear systems $B(\zeta)x_d=b_d$ 
solved by Algorithm \ref{alg:1} is independent of $N$, and thus the average 
value becomes smaller as $N$ increases. On the other hand, the accuracy 
achieved by Algorithm \ref{alg:1} is also lower. For RSI we 
consider three different dimensions of the starting subspace, set as 
$m=1.1n_{ev},\ m=1.5n_{ev},$ and $m=2n_{ev}$. The rate of convergence of RSI 
is dictated by the ratio $\rho(\lambda_m)/\rho(\lambda_{n_{ev}})$, and thus
the convergence rate improves as $m$ increases. We observe two main trends. 
First, for small values of $m$, e.g., $m=1.1n_{ev}$, RSI is considerably more 
expensive than both Algorithm \ref{alg:0} and Algorithm \ref{alg:1}. This is 
because for small values of $m$ the ratio $\rho(\lambda_m)/\rho(\lambda_{n_{ev}})$ 
might not be close to zero, thus leading to slower convergence in RSI. Second, 
as $m$ increases, the average number of linear systems solved by RSI generally 
decreases, however the large value of $m$ might lead to a few unnecessary solves, 
i.e., for $N=32$ choosing $m=2n_{ev}$ sometimes provides the same accuracy with
$m=1.5n_{ev}$. 

\begin{figure*}[!htbp]
\centering
\includegraphics[width=0.69\textwidth]{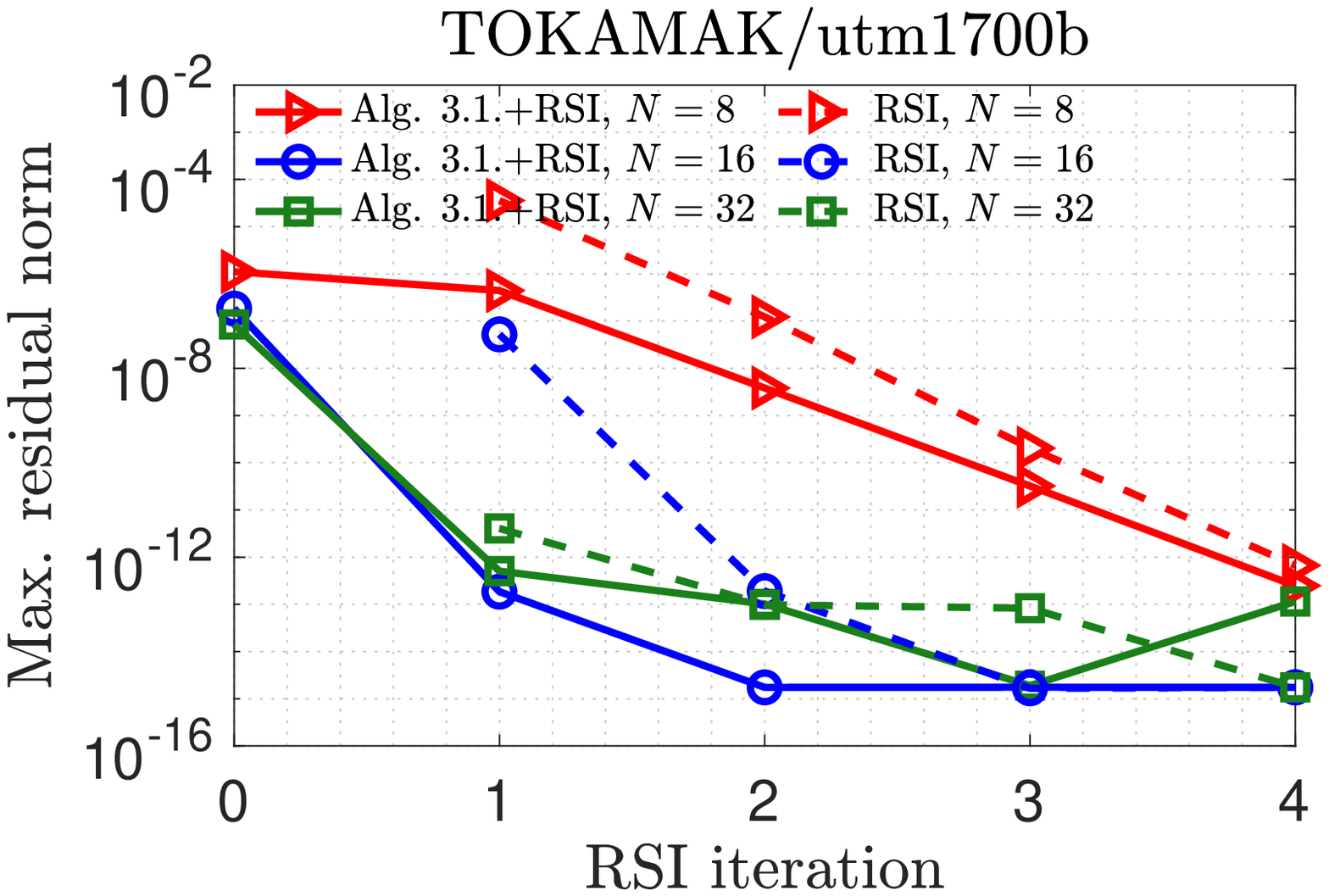}
\includegraphics[width=0.69\textwidth]{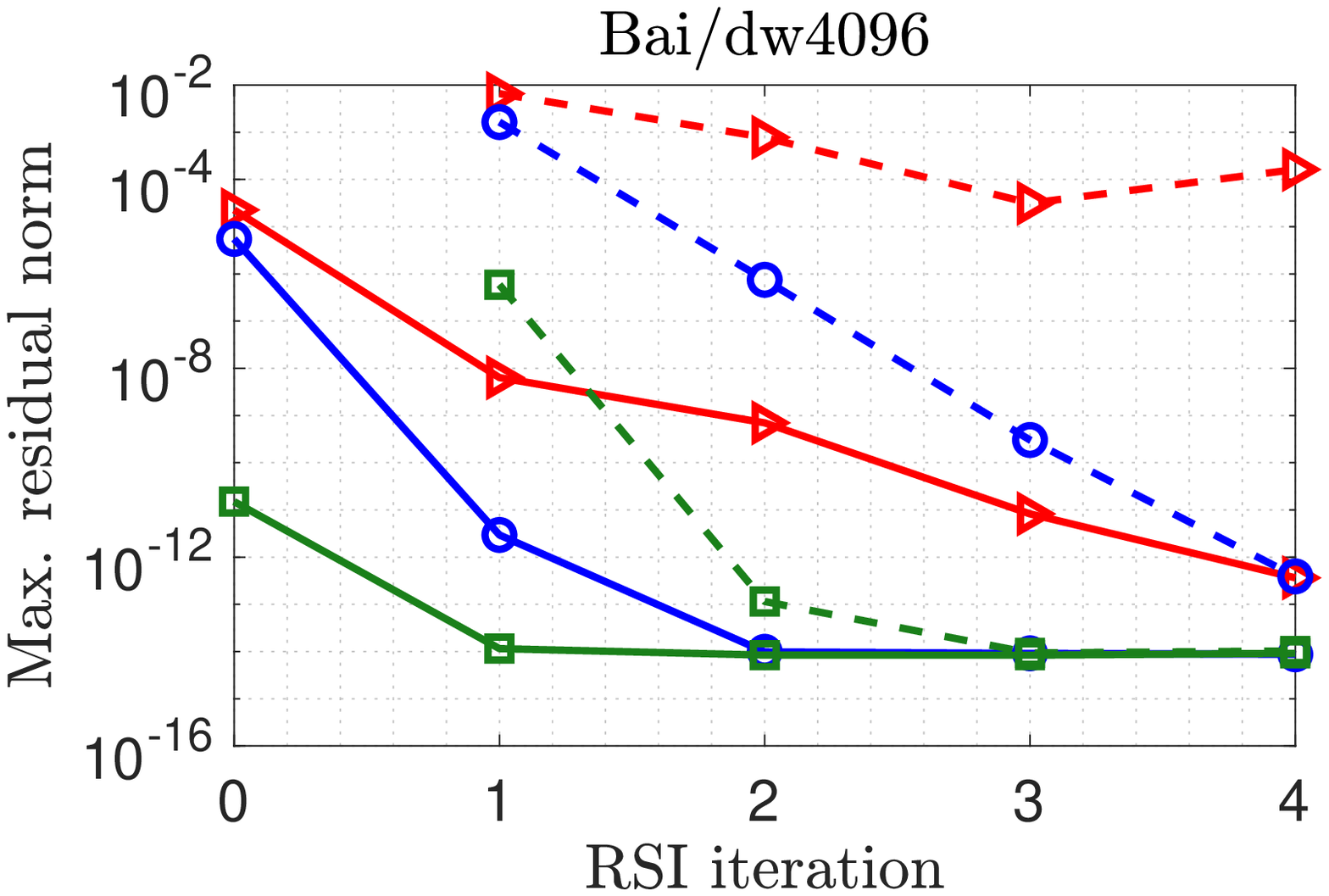}
\caption{{Maximum residual norm achieved at each iteration of 
RSI as $N$ varies and $m=1.5n_{ev},\ n_{ev}=40$. The values listed at the origin 
denote the maximum residual norm achieved by Algorithm \ref{alg:0} before post-processing 
by RSI.}} \label{rsivs}
\end{figure*}
{
In the previous experiment we assumed that the tolerance threshold in RSI was 
dictated by the maximum residual norm achieved by Algorithm \ref{alg:0}. 
Since Algorithm \ref{alg:0} is a one-shot method, its maximum attainable 
accuracy is lower than that of RSI since the latter is an iterative approach. 
Nonetheless, the accuracy of the approximate eigenpairs returned by 
Algorithm \ref{alg:0} can improve by using the corresponding eigenvectors 
as an initial subspace in a separate run of RSI. While this enhancement comes 
at an increased computational cost, it is generally still cheaper than applying 
subspace iteration with a random starting subspace since it avoids the overhead 
associated with the computation of a good approximation of $n_{ev}$ through 
the techniques described in \cite{sakurai2013efficient} and \cite{di2016efficient}. 
Figure \ref{rsivs} plots the maximum residual norm achieved at the end of each 
iteration when RSI is applied to matrices $\texttt{utm1700b}$ and 
$\texttt{dw4096}$ with $m=1.5n_{ev}$ and $n_{ev}=40$. The initial subspace 
in RSI was set using “a)" $m$ random vectors as in the results reported above 
(dashed lines), and “b)" the $n_{ev}$ approximate eigenvectors returned by 
Algorithm \ref{alg:0} augmented by $m-n_{ev}$ random vectors (solid lines). 
In the latter case, the value at the origin denotes 
the accuracy achieved by Algorithm \ref{alg:0} before post-processing by RSI. 
The combination of RSI with Algorithm \ref{alg:0} leads to faster convergence, 
since the initial subspace is of much higher quality. What approach will be 
faster overall depends on the particular problem. For example, choosing 
$N=8$ and a random initial subspace leads to very slow convergence in the case 
of matrix $\texttt{dw4096}$. }

\section{Conclusion} \label{sec5}

This paper presents a class of algorithms for the computation of all eigenvalues 
(and associated eigenvectors) of non-Hermitian matrix pencils located inside a 
disk. The proposed algorithms approximate the sought eigenpairs by harmonic 
Rayleigh-Ritz projections on subspaces built by computing range spaces of rational 
matrix functions through randomized range finders. These rational matrix functions 
are designed so that 
directions associated with non-sought eigenvalues are dampened to (approximately) zero.
Moreover, the proposed algorithms do not require any a priori estimation of the number of 
eigenvalues located inside the disk. The competitiveness of the proposed algorithms 
was demonstrated through numerical experiments performed on a few test problems.

Several research directions are left as future work. One such direction is the extension 
of the algorithms presented in this paper with non-Hermitian Krylov subspace iterative 
solvers and hierarchical preconditioners such as those discussed in \cite{KalantzisSISC22018}. 
Moreover, although this paper focused on algorithms, rational filtering eigenvalue solvers 
owe a large portion of their appeal in the ample parallelism they offer, and a 
distributed memory implementation of the proposed technique would be also of interest. Another 
interesting research direction is the extension of the algorithms presented in this paper 
for the computation of a partial Schur decomposition, or the simultaneous computation of 
both left and right eigenvectors.

\section*{Acknowledgments}

The authors are grateful to the two anonymous referees for their helpful comments 
and suggestions which improved the the readibility and overall quality of the present 
manuscript. The contribution of the first author was partially carried out under 
the support of the Herman H. Goldstine Postdoctoral Fellowship program of the 
International Business Machines Corporation. The work of Yuanzhe Xi was supported 
by NSF grant OAC 2003720. Finally, the authors are indebted to Anthony P. Austin 
for his comments on an earlier draft of the present manuscript.

\bibliographystyle{siam} 
\bibliography{local}

\end{document}